    \newcommand{\Q}{\mathbb{Q}}
    \newcommand{\R}{\mathbb{R}}
    \newcommand{\dom}{\mbox{\rm dom}}
    \newcommand{\ran}{\mbox{\rm ran}}
    \newcommand{\thzfc}{\mathrm{ZFC}}
    \newcommand{\Awf}{\mathcal{A}}
    \newcommand{\Bwf}{\mathcal{B}}
    \newcommand{\Cwf}{\mathcal{C}}
    \newcommand{\Ewf}{\mathcal{E}}
    \newcommand{\Fwf}{\mathcal{F}}
    \newcommand{\Gwf}{\mathcal{G}}
    \newcommand{\Iwf}{\mathcal{I}}
    \newcommand{\Jwf}{\mathcal{J}}
    \newcommand{\Mwf}{\mathcal{M}}
    \newcommand{\Nwf}{\mathcal{N}}
    \newcommand{\Owf}{\mathcal{O}}
    \newcommand{\Pwf}{\mathcal{P}}
    \newcommand{\bfrak}{\mathfrak{b}}
    \newcommand{\cfrak}{\mathfrak{c}}
    \newcommand{\blocfrak}{\mathfrak{b}_\textrm{Loc}}
    \newcommand{\menos}{\smallsetminus}
    \newcommand{\frestr}{\!\!\upharpoonright\!\!}
    \newcommand{\add}{\mbox{\rm add}}
    \newcommand{\cof}{\mbox{\rm cof}}
    \newcommand{\Fin}{\mbox{\rm Fin}}
    \newcommand{\Bor}{\mathds{B}}
    \newcommand{\Cor}{\mathds{C}}
    \newcommand{\Dor}{\mathds{D}}
    \newcommand{\Loc}{\mathds{LOC}}
    \newcommand{\Por}{\mathds{P}}
    \newcommand{\Qor}{\mathds{Q}}
    \newcommand{\Qnm}{\dot{\mathds{Q}}}
    \newcommand{\imp}{{\ \mbox{$\Rightarrow$} \ }}
\title{Rothberger gaps in fragmented ideals}
\author{J\"org Brendle\thanks{Partially supported by Grant-in-Aid for Scientific Research (C)  24540126, Japan Society for the Promotion of Science.}}
\affil{Graduate School of System Informatics,\\
Kobe University,\\
1-1 Rokkodai, Nada-ku,\\
657-8501 Kobe, Japan.\\
E-mail: brendle@kurt.scitec.kobe-u.ac.jp}
\author{Diego Alejandro Mej\'ia\thanks{Supported by the Monbukagakusho (Ministry of Education, Culture, Sports, Science and Technology) Scholarship, Japan.}}
\affil{Graduate School of System Informatics,\\
Kobe University,\\
Kobe, Japan.\\
E-mail: damejiag@kurt.scitec.kobe-u.ac.jp}
\begin{document}

\makeatletter
\def\@roman#1{\romannumeral #1}
\makeatother

\makeatletter
\def\@maketitle{%
  \newpage
  \null
  \vskip 2em%
  \begin{center}%
  \let \footnote \thanks
    {\Large\bfseries \@title \par}%
    \vskip 1.5em%
    {\normalsize
      \lineskip .5em%
      \begin{tabular}[t]{c}%
        \@author
      \end{tabular}\par}%
    \vskip 1em%
    {\normalsize \@date}%
  \end{center}%
  \par
  \vskip 1.5em}
\makeatother

\newcounter{enuAlph}
\renewcommand{\theenuAlph}{\Alph{enuAlph}}

\theoremstyle{plain}
  \newtheorem{theorem}{Theorem}[section]
  \newtheorem{thm}[enuAlph]{Theorem}
  \newtheorem{corollary}[theorem]{Corollary}
  \newtheorem{lemma}[theorem]{Lemma}
  \newtheorem{prop}[theorem]{Proposition}
  \newtheorem{claim}[theorem]{Claim}
  \newtheorem{exer}{Exercise}
  \newtheorem{conjecture}[theorem]{Conjecture}
  \newtheorem{question}[theorem]{Question}
  \newtheorem{problem}[theorem]{Problem}
\theoremstyle{definition}
  \newtheorem{definition}[theorem]{Definition}
  \newtheorem{example}[theorem]{Example}
  \newtheorem{remark}[theorem]{Remark}
  \newtheorem{context}[theorem]{Context}

\maketitle


\renewcommand{\thefootnote}{}

\footnote{2010 \emph{Mathematics Subject Classification}: Primary 03E05; Secondary 03E15, 03E17, 03E35.}

\footnote{\emph{Key words and phrases}: Quotients of ideals on $\omega$, Rothberger gaps, fragmented ideals, (un)bounding number, additivity of the null ideal.}

\renewcommand{\thefootnote}{\arabic{footnote}}
\setcounter{footnote}{0}


\begin{abstract}
\noindent The~\emph{Rothberger number} $\bfrak (\Iwf)$ of a definable ideal $\Iwf$ on $\omega$ is the least cardinal $\kappa$ such that there exists a Rothberger gap of type $(\omega,\kappa)$ in the quotient algebra $\Pwf (\omega) / \Iwf$. We investigate $\bfrak (\Iwf)$ for a subclass of the $F_\sigma$ ideals, the fragmented ideals, and prove that for some of these ideals, like the linear growth ideal, the Rothberger number is $\aleph_1$ while for others, like the polynomial growth ideal, it is above the additivity of measure. We also show that it is consistent that there are infinitely many (even continuum many) different Rothberger numbers associated with fragmented ideals.
\end{abstract}

%
%

\section{Introduction}\label{Intro}

The investigation of gaps in the quotient Boolean algebra $\Pwf (\omega) / \mathrm{Fin}$ has a long and rich history. More than one hundred years ago, Hausdorff~\cite{hausdorff1,hausdorff2} constructed his celebrated $(\omega_1 , \omega_1)$-gap. Several decades later, Rothberger~\cite{rothb} produced an $(\omega,\bfrak)$-gap where $\bfrak$ denotes the \emph{(un)bounding number}, that is, the least size of an unbounded family in the preorder $(\omega^\omega , \leq^*)$ defined by $f \leq^* g$ iff $f(i) \leq g(i)$ holds for all but finitely many $i$. In fact, he proved that $\bfrak$ is the least cardinal $\kappa$ such that there are $(\omega,\kappa)$-gaps in $\Pwf (\omega) / \mathrm{Fin}$. It turns out that these are the only two types of gaps that exist in ZFC. Namely, not only under the continuum hypothesis $\mathrm{CH}$, but also under appropriate forcing axioms like the proper forcing axiom PFA\footnote{The argument in~\cite{todorpart} in fact shows that the conjunction of $\cfrak = \aleph_2$ and the open coloring axiom $\mathrm{OCA}$ is enough.}, any gap in $\Pwf (\omega) / \Fin$ is either of type $(\omega_1,\omega_1)$ or of type $(\omega, \bfrak)$~\cite[Theorem 8.6]{todorpart}.

Much more recently, research has shifted towards gaps in more general quotient algebras of the form $\Pwf (\omega) / \Iwf$ where $\Iwf$ is a definable ideal on the natural numbers. First, Mazur~\cite{mazur} showed that there are $(\omega_1,\omega_1)$-gaps in any quotient by an $F_\sigma$ ideal. Then, Todor\v cevi\'c~\cite{todorcevic} proved an important general result saying that for a large class of ideals $\Iwf$, including the $F_\sigma$ ideals and the analytic P-ideals, the gap spectrum of $\Pwf (\omega) / \Iwf$ includes the one of $\Pwf (\omega) / \mathrm{Fin}$, that is, every type of gap that exists in $\Pwf (\omega) / \mathrm{Fin}$ also exists in $\Pwf (\omega) / \Iwf$. Essentially, every Baire embedding $\Pwf(\omega)/\textrm{Fin}\to\Pwf(\omega)/\Iwf$ preserves $(\omega_1,\omega_1)$-gaps from $\Pwf (\omega) / \mathrm{Fin}$ when $\Iwf$ is analytic and it preserves all types of gaps when $\Iwf$ is an analytic P-ideal~\cite[Theorems 10 and 11]{todorcevic}. Moreover, for $F_\sigma$ ideals (actually for a larger class of ideals called \emph{Mazur ideals}), there exists a continuous embedding that preserves all gaps~\cite[Theorem 12]{todorcevic}. In particular, all quotients by $F_\sigma$ ideals or analytic P-ideals have $(\omega_1,\omega_1)$-gaps and $(\omega,\bfrak)$-gaps. Todor\v cevi\'c also addressed the general problem of determining the gap spectrum of such quotients~\cite[Problem 2]{todorcevic} (see~\cite[Section 5]{farah1} for a more detailed discussion of this problem).

This problem has triggered a number of interesting results. For example, Farah~\cite{farah2} proved that for all $F_\sigma$ P-ideals $\Iwf$ that are not generated by a single set over $\mathrm{Fin}$, there is an \emph{analytic Hausdorff gap} in $\Pwf (\omega) / \Iwf$. As a consequence, there is an $(\add (\Nwf), \add (\Nwf))$-gap in such quotients under the assumption $\add (\Nwf) = \cof (\Nwf)$ where $\add (\Nwf)$ and $\cof (\Nwf)$ are the \emph{additivity} (the smallest cardinality of a family of null sets whose union is not null) and the \emph{cofinality} of the ideal $\Nwf$ of Lebesgue null sets, respectively, and, in particular, a $(\cfrak,\cfrak)$-gap under Martin's axiom MA. This shows that the gap spectrum of such quotients may be larger than the one of $\Pwf (\omega) / \mathrm{Fin}$. More recently, Kankaanp\"a\"a~\cite{kankaanpaa} showed that there is an $(\omega , \add (\Mwf))$-gap in $\Pwf (\Q) / \mathrm{nwd}$ where $\mathrm{nwd}$ denotes the $F_{\sigma\delta}$ ideal of nowhere dense subsets of the rational numbers $\Q$ and $\add (\Mwf)$ is the \emph{additivity} of the meager ideal $\Mwf$ on the reals. In fact, similar to Rothberger gaps in $\Pwf (\omega) / \mathrm{Fin}$, $\add (\Mwf)$ is the least cardinal $\kappa$ such that there are $(\omega,\kappa)$-gaps in this quotient.

In this paper we investigate in greater depth for which uncountable definable cardinals $\kappa$ there are gaps of type $(\omega,\kappa)$ in quotients by definable ideals. In particular we will focus on the smallest cardinal $\kappa$ for which such gaps exist in a given quotient. Before outlining our main results, we review some basic notions and notation concerning gaps.

Given a Boolean algebra $\Bor$, $\Awf , \Bwf \subseteq \Bor$ are called \emph{orthogonal} if $a \land b = 0$ for all $a \in \Awf$ and $b \in \Bwf$. The pair $\langle\Awf,\Bwf\rangle$ is a \emph{gap} if there is no $c \in \Bor$ such that $a \land c = 0$ for all $a \in \Awf$ and $b \leq c$ for all $b \in \Bwf$. If both $\Awf$ and $\Bwf$ are $\sigma$-directed (i.e., given $\Cwf \subseteq \Awf$ countable there is $a \in \Awf$ such that $c \leq a$ for all $c \in \Cwf$, and similarly for $\Bwf$), we call $\langle\Awf , \Bwf\rangle$ a \emph{Hausdorff gap}. If, on the other hand, one of $\Awf$ and $\Bwf$ is countable, $\langle\Awf, \Bwf\rangle$ is a \emph{Rothberger gap}. $\langle\Awf ,\Bwf\rangle$ is a \emph{linear gap} of \emph{type $(\kappa,\lambda)$} (a \emph{linear $(\kappa,\lambda)$-gap}, for short) if $\Awf$ and $\Bwf$ are well-ordered of order type $\kappa$ and $\lambda$, respectively, i.e., $\Awf = \langle a_\alpha / \alpha <\kappa\rangle$ and $\Bwf = \langle b_\beta / \beta < \lambda\rangle$ are both strictly increasing. Obviously, a $(\kappa,\lambda)$-gap is Hausdorff if both $\kappa$ and $\lambda$ have uncountable cofinality, and Rothberger, if one of $\kappa$ and $\lambda$ is $\omega$.\footnote{By symmetry, it suffices to consider one of these two cases, and we shall always use the notation \emph{$(\omega,\lambda)$-gap} for Rothberger gaps.}

Let $\Iwf$ be an ideal\footnote{Unless we state the contrary, our ideals are \emph{non-trivial}, that is, they contain all finite sets but do not contain $\omega$.} on the natural numbers $\omega$. For $A,B\subseteq\omega$, $A\subseteq_\Iwf B$ means that $A\menos B$ belongs to $\Iwf$, and $\sim_\Iwf$ is the equivalence relation on $\Pwf(\omega)$ given by $A\sim_\Iwf B$ iff $A\subseteq_\Iwf B$ and $B\subseteq_\Iwf A$. $\Pwf(\omega)/\Iwf:=\Pwf(\omega)/\sim_\Iwf$ is the quotient Boolean algebra. For $A\subseteq \omega$, let $\Iwf\frestr A:=\{ X\in\Iwf\ /\ X\subseteq A\}$ be the \emph{restriction of the ideal $\Iwf$ to $A$}. Denote by $\textrm{Fin}$ the ideal of finite subsets of $\omega$ and let $A\subseteq^*B$ iff $A\subseteq_{\mathrm{Fin}} B$. Given a pointclass $\Gamma$ on the Cantor space $2^\omega$, an ideal $\Iwf$ is a \emph{$\Gamma$ ideal} if the set of characteristic functions of elements of $\Iwf$ belongs to $\Gamma$. The simplest non-trivial ideals are $F_\sigma$.

For an ideal $\Iwf$ on $\omega$ we define $\bfrak(\Iwf)$, the \emph{Rothberger number of $\Iwf$}, as the minimal cardinal $\kappa$ such that there exists an $(\omega,\kappa)$-gap in $\Pwf(\omega)/\Iwf$. Clearly, if $\bfrak(\Iwf)$ exists, it is a regular uncountable cardinal. Simpler equivalent ways to look at $\bfrak(\Iwf)$ are stated in Definition \ref{DefGaps} and Lemma \ref{Eqvb(I)}. Rothberger's result mentioned above says that $\bfrak(\textrm{Fin})=\bfrak$ (and this is our reason for using the letter $\bfrak$ for the Rothberger number) while Todor\v cevi\'c's theorem implies that $\bfrak(\Iwf)\leq\bfrak$ when $\Iwf$ is either an analytic P-ideal \emph{or} an $F_\sigma$ ideal. By Solecki's characterization~\cite{solecki,solecki2} of analytic P-ideals as ideals of the form $\mathrm{Exh}(\varphi)$ where $\varphi$ is a lower semicontinuous submeasure on $\Pwf (\omega)$ (see~\ref{idealmeas} below for details), it follows that $\bfrak(\Iwf) = \bfrak$ for such ideals.\footnote{We include a proof of this well-known fact in Remark~\ref{RemGaps}(7) because we could not find a reference.}

In our work, we concentrate on a class of $F_\sigma$ ideals introduced in work of Hru\v s\'ak, Rojas-Rebolledo, and Zapletal~\cite{hrusakzap}, namely the \emph{fragmented ideals} (see Definition \ref{DefFragId}). The reason for doing so is that on the one hand these ideals are combinatorially rather simple while on the other hand we obtain a rich spectrum of possible values for the Rothberger number for them. An important subclass are the \emph{gradually fragmented ideals} (\cite[Def. 2.1]{hrusakzap} and Definition \ref{DefFragId}). Typical examples are
\begin{itemize}
   \item the ideal $\mathcal{ED}_\textrm{fin}$ (see \cite[p.42]{hrusak}) whose underlying set consists of the ordered pairs below the identity function and which is generated by (graphs of) functions; this ideal is fragmented but not gradually fragmented (see also Example \ref{ExpFragId}(3));
   \item the \emph{linear growth ideal} $\Iwf_L$ (see \cite[p.56]{hrusak}) defined as follows: letting $\{a_i\}_{i<\omega}$ be the interval partition of $\omega$ such that $|a_i|=2^i$, say that $x\in\Iwf_L$ iff $\exists_{m<\omega}\forall_{i<\omega}(|x \cap a_i|\leq m\cdot(i+1))$; this ideal is fragmented but not gradually fragmented (see also Example \ref{ExpFragId}(4));
   \item the \emph{polynomial growth ideal} $\Iwf_P$ (see \cite[p.56]{hrusak}) given by $x\in\Iwf_P$ iff $\exists_{m<\omega}\forall_{i<\omega}(|x\cap a_i|\leq (\max\{i,2\})^m)$ where $\{a_i\}_{i<\omega}$ is the same interval partition of $\omega$ as for $\Iwf_L$; this ideal is gradually fragmented (see also Example \ref{ExpFragId}(2)).
\end{itemize}
Our main results are:

\begin{thm}
For a large class of fragmented, not gradually fragmented ideals $\Iwf$, including $\mathcal{ED}_\mathrm{fin}$ and $\Iwf_L$, we have $\bfrak (\Iwf) = \aleph_1$; i.e., there is an $(\omega,\omega_1)$-Rothberger gap in $\Pwf (\omega) / \Iwf$ (Theorems~\ref{b(EDfin)=aleph1}, \ref{b(unifsubmeas)=aleph1} and~\ref{b(meas)=aleph1}).
\end{thm}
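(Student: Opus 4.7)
The plan is to exhibit, in ZFC, an explicit $(\omega,\omega_1)$-Rothberger gap in $\Pwf(\omega)/\Iwf$ for every $\Iwf$ in the stated class; since the Rothberger number is regular uncountable whenever defined, this yields $\bfrak(\Iwf)=\aleph_1$. A fragmented ideal is canonically presented by an interval partition $\{a_i\}_{i<\omega}$ of $\omega$ together with submeasures $\mu_i$ on $a_i$ with $\sup_i\mu_i(a_i)=\infty$, via $x\in\Iwf$ iff $\sup_i\mu_i(x\cap a_i)<\infty$; the failure of gradual fragmentation supplies, at arbitrarily large scales, ``wide'' fragments $a_i$ that carry many pairwise disjoint subsets of large $\mu_i$-mass, and this combinatorial room is what lets the recursion go through in ZFC.

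First I would fix the $\omega$-side once and for all. Using the fragmentation data, choose nested subsets $a_i=a_i^0\supseteq a_i^1\supseteq a_i^2\supseteq\cdots$ so that $\mu_i(a_i\setminus a_i^n)$ grows in $n$, and set $A_n:=\bigcup_i a_i^n$. Then $\{A_n\}$ is descending in $\Pwf(\omega)/\Iwf$, and a direct computation converts the orthogonality condition $B\cap A_n\in\Iwf$ into an explicit pointwise constraint on how $B\cap a_i$ may spill into $a_i^n$.

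Next I would build the $\omega_1$-side $\{B_\alpha:\alpha<\omega_1\}$ by recursion. At a successor stage, the failure of gradual fragmentation yields a fresh index $i_\alpha$ and a subset $s_\alpha\subseteq a_{i_\alpha}$ of large $\mu_{i_\alpha}$-mass, disjoint from everything previously used and compatible with the $A_n$-constraints; set $B_{\alpha+1}:=B_\alpha\cup s_\alpha$. At a countable limit, fix a cofinal $\omega$-sequence $\langle\beta_k\rangle_{k<\omega}$ in $\alpha$ and amalgamate $\{B_{\beta_k}\}$ fragment by fragment, using the wide fragments once more to keep the union below every $A_n$. By construction $\{B_\alpha\}$ is $\subseteq_\Iwf$-increasing with $B_\alpha\subseteq_\Iwf A_n$ for every $n$, so we have a pregap. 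To verify it is a gap, assume $C\subseteq\omega$ interpolates: $C\setminus A_n\in\Iwf$ for every $n$ and $B_\alpha\setminus C\in\Iwf$ for every $\alpha$. The first bounds $\mu_i(C\cap a_i)$ uniformly (by a quantity controlled by the data of the first step), while the second forces each chunk $s_\alpha$ to sit, up to $\mu_{i_\alpha}$-mass at most some finite $m_\alpha$, inside $C\cap a_{i_\alpha}$. A pigeonhole on $\{m_\alpha:\alpha<\omega_1\}$ produces an uncountable $I\subseteq\omega_1$ with a common bound $m_\alpha\le M$, whence the chunks $\{s_\alpha:\alpha\in I\}$ overload the uniform bound on $\mu_i(C\cap a_i)$ just derived, the desired contradiction. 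Finally, one checks case by case that $\mathcal{ED}_\mathrm{fin}$, $\Iwf_L$, and the other named examples fall into the combinatorial class used above.

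The main obstacle is the limit-stage amalgamation: one must simultaneously absorb countably many previous $B_{\beta_k}$ and plant a fresh chunk large enough to be detected in the verification, all while respecting $B_\alpha\subseteq_\Iwf A_n$ for every $n$. It is precisely here that the hypothesis of being \emph{not} gradually fragmented is indispensable: in the gradually fragmented case the available chunks shrink at a rate that no uncountable diagonalization can outrun, which is exactly why the companion theorem for, e.g., the polynomial growth ideal lifts $\bfrak(\Iwf)$ above $\aleph_1$.
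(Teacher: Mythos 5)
Your construction has a fatal cardinality obstruction at its core, and it is precisely the one the paper's actual arguments are designed to circumvent. You propose to build the $\omega_1$-side by adding at each successor stage a ``fresh chunk'' $s_\alpha\subseteq a_{i_\alpha}$ that is \emph{disjoint from everything previously used}, and your verification step then argues that uncountably many such chunks ``overload'' the uniform bound on $\mu_i(C\cap a_i)$. But $\omega$ is countable, and every fragment $a_i$ is finite: there simply cannot exist $\omega_1$ pairwise disjoint nonempty subsets of $\omega$, so the recursion cannot run for $\omega_1$ steps. Equally, even if you could somehow place the chunks, the overload argument in the final pigeonhole step implicitly uses that the selected chunks $\{s_\alpha:\alpha\in I\}$ are disjoint (so their masses add up inside a common $C\cap a_i$), and this is exactly the unavailable feature.

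What the paper does instead is the genuinely missing idea. In Theorem~\ref{b(EDfin)=aleph1} the sets $B_\alpha$ are built from the ranges of an $\aleph_1$-sized family of \emph{pairwise eventually different} functions $f_{n,\alpha}\in\prod_{i\geq N_n}(A_n\cap a_i)$; different $B_\alpha$'s overlap enormously but are separated ``levelwise'' in a way that survives the pigeonhole. In Theorems~\ref{b(unifsubmeas)=aleph1} and~\ref{b(meas)=aleph1} the same role is played by an $\aleph_1$-sized \emph{independent family} of selectors $f_{k,\alpha}\in P_k$: after refining to an uncountable $\Gamma\subseteq\omega_1$ with a common bound $m$, independence produces a fragment $a_{i,j,k}$ in which finitely many of the $f_{k,\alpha}(i,j)$ are pairwise disjoint, and \emph{those} masses then add up and contradict the bound on $\varphi_{i,j,k}(B\cap a_{i,j,k})$. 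The uncountable family is chosen up front (its existence is a ZFC combinatorial fact, e.g.\ via Cohen reals over a countable model) rather than built by transfinitely planting disjoint material; the transfinite recursion only organizes these preexisting objects into a $\subseteq_\Iwf$-increasing chain. Your pigeonhole step and the uniform bound on $\mu_i(C\cap a_i)$ coming from $C\setminus A_n\in\Iwf$ are both correct in spirit and do appear in the paper's verification, but without the eventually-different or independence structure the final contradiction cannot be extracted. To repair the proposal you would have to replace ``disjoint fresh chunks'' with one of these devices.
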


\begin{thm}
\begin{enumerate}[(i)]
\item For all gradually fragmented ideals $\Iwf$,
\begin{itemize}
\item $\bfrak (\Iwf) \geq \add (\Nwf)$ (Corollary~\ref{b(gradfrag)aboveAddN}),
\item if $\Iwf$ is nowhere tall (see Definition \ref{Defpseudo-tall}), then $\bfrak (\Iwf) = \bfrak$ (Corollary~\ref{b(notpseudotall)=b}),
\item if $\Iwf$ is somewhere tall (see Definition \ref{Defpseudo-tall}), then $\bfrak (\Iwf) < \bfrak$ is consistent (Theorem~\ref{allb(I)belowb}).
\end{itemize}
\item For a large class of gradually fragmented ideals $\Iwf$ including $\Iwf_P$, $\bfrak (\Iwf) > \add (\Nwf)$ is consistent (Theorem~\ref{add(N)belowsomeb(I)}).
\item There may be (simultaneously) many gradually fragmented ideals with distinct Rothberger number (Theorem \ref{Consdiffb(I)}).
\end{enumerate}
\end{thm}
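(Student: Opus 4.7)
The plan is to exploit the combinatorial structure of gradually fragmented ideals — namely, the interval partition $\{I_n\}_{n<\omega}$ of $\omega$ and the submeasures $\varphi_n$ on $I_n$ controlling $\Iwf$-smallness — to convert questions about Rothberger gaps in $\Pwf(\omega)/\Iwf$ into questions about familiar cardinal invariants. In this framework an $(\omega,\kappa)$-pregap $\langle\{X_n\}_{n<\omega},\{Y_\alpha\}_{\alpha<\kappa}\rangle$ of $\Iwf$-orthogonal families encodes, on each fragment, a local approximation problem, so the global question of whether a separator exists becomes a localization problem whose combinatorial parameters line up with $\add(\Nwf)$ (when the $\varphi_n$ are gradually controlled) or with $\bfrak$ (when the ideal reduces piecewise to $\Fin$).

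For the lower bound $\bfrak(\Iwf)\geq\add(\Nwf)$ in clause (i), I would attach to each $Y_\alpha$ a slalom on the index set of fragments whose width is given by the $\varphi_n$-admissible ``small'' sets accommodating the trace $Y_\alpha\cap I_n$. Bartoszy\'nski's characterization of $\add(\Nwf)$ as the minimal size of a family of slaloms not simultaneously localizable by a single function then allows, whenever $\kappa<\add(\Nwf)$, the extraction of a coherent function that localizes all $\kappa$ slaloms at once; translating this function back to $\omega$ produces a set $Z$ with $Z\cap X_n\in\Iwf$ for every $n$ and $Y_\alpha\subseteq_\Iwf Z$ for every $\alpha$, contradicting gap-hood. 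The delicate point, where gradual growth enters essentially, is tuning the slalom widths so that local witnesses glue into one coherent $Z$ rather than merely showing existence of a separator on each fragment individually.

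For the nowhere/somewhere tall dichotomy: in the nowhere tall case every $A\notin\Iwf$ contains an infinite $B$ on which $\Iwf\frestr B$ essentially agrees with $\Fin$, so any $(\omega,\kappa)$-gap in $\Pwf(\omega)/\Iwf$ restricts to an $(\omega,\kappa)$-gap in $\Pwf(B)/(\Iwf\frestr B)\cong\Pwf(\omega)/\Fin$, forcing $\bfrak(\Iwf)\geq\bfrak$; combined with the inequality $\bfrak(\Iwf)\leq\bfrak$ from Todor\v cevi\'c's embedding theorem, this yields equality. In the somewhere tall case, the tall restriction supports an ``honest'' $\Iwf$-pregap which is not merely a disguised $\Fin$-pregap, so one can design a proper $\omega^\omega$-bounding forcing that locks in such a pregap as an $(\omega,\aleph_1)$-gap and iterate it with countable support over a model of $\mathrm{CH}$ of length $\aleph_2$, blowing up $\bfrak$ while preserving the gap.

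Clauses (ii) and (iii) are consistency statements proved by tailored forcing iterations. For (ii), for ideals like $\Iwf_P$ one iterates a forcing which drives $\add(\Nwf)$ down to $\aleph_1$ — for instance amoeba or Cohen variants — coupled with a preservation lemma stating that no iterand either adds a separator for a prescribed $(\omega,\aleph_2)$-pregap in $\Pwf(\omega)/\Iwf_P$ or destroys its gap status; the polynomial growth of the $\varphi_n$ supplies the slack this preservation needs. For (iii), one runs a matrix (or product) iteration along a sequence of gradually fragmented ideals with sufficiently independent growth rates, arranging at each coordinate for the designated ideal to receive its prescribed Rothberger value without interference from the treatment of the others. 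The hardest step, I expect, is the lower bound in clause (i): ensuring that the slalom-to-separator construction yields a set whose behaviour on all fragments simultaneously witnesses separation of the pregap, rather than a set merely separating on each fragment in isolation; a secondary obstacle is the preservation analysis for (ii) and (iii), which requires fine-grained control of how each iterand interacts with the submeasure structure of the target ideal.
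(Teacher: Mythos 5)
Your high-level framing---convert gap separability into a localization problem along the fragmentation---is the right instinct, but the way you instantiate it reverses Bartoszy\'nski's characterization, and two of the remaining bullets rest on arguments that do not go through.

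For the bound $\bfrak(\Iwf)\geq\add(\Nwf)$, you attach a \emph{slalom} to each $Y_\alpha$ and seek a single \emph{function} localizing them all. Theorem~\ref{BartcharAdd(N)} runs the other way: $\add(\Nwf)$ is the least size of a family of \emph{functions} not $\in^*$-covered by one \emph{slalom}. The paper's argument (Theorem~\ref{DestroyGradFragGaps} and Corollary~\ref{b(gradfrag)aboveAddN}) builds, from each $B\in\Bwf$, a real $x_B\in\prod_i\Pwf(a_i)$ (a single $\varphi_i$-small trace per fragment, pruned by a dominating real $d$), then covers the family $\{x_B\}$ by one slalom $\psi$ of width $h$, and finally sets $C=\bigcup_i\bigcup_{x\in\psi(i)}x$. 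The gradual fragmentation enters precisely to ensure that the union of $h(i)$ many small sets in $\psi(i)$ stays small ($\varphi_i(C\cap A_n\cap a_i)\leq f(d(n))$). Your version, with roles swapped, does not correspond to $\add(\Nwf)$ but to a different (dual) invariant, and also omits the essential intermediate step of first fixing a dominating real $d$ (which exists because $\add(\Nwf)\leq\bfrak$): both the traces $x_B$ and the slalom width $h$ depend on $d$. Without this two-stage structure the pieces do not glue.

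The nowhere-tall argument is inverted. From nowhere-tallness you correctly extract an infinite $B$ with $\Iwf\frestr B=\Fin\frestr B$, but a gap in $\Pwf(\omega)/\Iwf$ need not restrict to a gap in $\Pwf(B)/(\Iwf\frestr B)$: a separator $C\subseteq B$ for the restricted pair controls nothing outside $B$, so it does not separate the original pair, and the implication $\bfrak(\Iwf)\geq\bfrak$ is not established. Remark~\ref{RemGaps}(2) gives only the reverse inequality $\bfrak(\Iwf)\leq\bfrak(\Iwf\frestr X)$. The paper instead passes through the structural Lemma~\ref{fragnotpseudo-tallchar} (nowhere tall forces the fragments to be singletons, hence $\{|a_i|\}$ bounded), observes that the localization poset $\Qor^h_b$ is then trivial so that gaps can already be destroyed by Hechler forcing alone, and concludes $\bfrak(\Iwf)\geq\bfrak$ that way.

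For the somewhere-tall consistency you propose a countable support iteration of proper $\omega^\omega$-bounding posets ``blowing up $\bfrak$''; these two requirements are incompatible, since $\omega^\omega$-bounding forcings do not add dominating reals and hence cannot raise $\bfrak$ above $\aleph_1$. The paper uses a finite support ccc iteration (Theorem~\ref{allb(I)belowb}) that interleaves Cohen forcing, small subalgebras of $\Loc^{id_\omega}$, and small subalgebras of $\Dor$; the $\Dor$-subalgebras push $\bfrak$ up while the strong-covering-family machinery of Section~\ref{SecPresProp} (Lemmas~\ref{Presb(I)paramfsi} and~\ref{Presb(I)paramcentered}, plus Theorem~\ref{b(I)leqparam}) keeps $\bfrak^1(\Iwf,\bar{L})$, and hence $\bfrak(\Iwf)$, at $\mu$. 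For clauses~(ii) and~(iii) your sketch of ``preserve a prescribed pregap'' and ``matrix iteration'' also misses the operative tools: the posets $\Qor^h_{b,\Fwf}$ (Definition~\ref{DefLocPoset}) used to raise $\blocfrak(b,h)$, and thus $\bfrak(\Iwf)$, for a chosen ideal, together with the $\langle\pi,\rho\rangle$-linkedness (Definitions~\ref{Defparamlinked},~\ref{DefPresb(I)param},~\ref{DefPresAddN} and Lemmas~\ref{Presb(I)param linked},~\ref{LinkednessLocposet},~\ref{PresAddNlinked}) that lets these posets coexist in the iteration without interfering with $\add(\Nwf)$ or with the Rothberger numbers of the other ideals. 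This quantitative non-interference calculus is the heart of Theorems~\ref{add(N)belowsomeb(I)} and~\ref{Consdiffb(I)} and is absent from the proposal.
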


Theorem A provides the first examples of ``absolute" Rothberger gaps in definable quotients, that is, gaps that are \emph{not} forcing destructible like the Rothberger gaps in $\Pwf (\omega) / \mathrm{Fin}$ or $\Pwf (\omega) / \Iwf$ for analytic P-ideals and many other Borel ideals $\Iwf$. In that sense, these gaps in $\Pwf (\omega) / \mathcal{ED}_\mathrm{fin}$ or $\Pwf (\omega) / \Iwf_L$ are similar to Hausdorff's original gap in $\Pwf (\omega) / \mathrm{Fin}$. Theorem B in particular tells us that uncountably many (consistently distinct) definable cardinals are Rothberger numbers, and not just well-known cardinals like $\aleph_1$, $\bfrak$, or $\add (\Mwf)$.

This paper is organized as follows. In Section \ref{SecPrelim} we introduce the basic notions and results regarding ideals and forcing notions that we are going to use throughout the text. We prove several versions of Theorem A in Section~\ref{SecsmallRoth}. In Section \ref{SecDestroyGradFragGaps} we introduce a forcing notion that is crucial for destroying gaps of gradually fragmented ideals. Section \ref{SecPresProp} is devoted to the preservation properties related to the Rothberger number of a fixed tall ideal, which are fundamental for proving all our main consistency results in Section \ref{SecConsFrag} (see Theorem B). We discuss open questions related to our work in Section \ref{SecQ}.

%
%

\section{Preliminaries}\label{SecPrelim}

The following definition simplifies the notion of gaps for ideals on $\omega$.

\begin{definition}\label{DefGaps}
   Let $\Iwf$ be an ideal on $\omega$, $\Awf,\Bwf$ collections of subsets of $\omega$.
   \begin{enumerate}[(1)]
      \item The pair $\langle\Awf,\Bwf\rangle$ is $\Iwf$-orthogonal if $A\cap B\in\Iwf$ for all $A\in\Awf$ and $B\in\Bwf$.
      \item A subset $C$ of $\omega$ \emph{separates $\langle\Awf,\Bwf\rangle$} (with respect to $\Iwf$) if $A\cap C\in\Iwf$ for all $A\in\Awf$ and $B\subseteq_\Iwf C$ for all $B\in\Bwf$.
      \item The pair $\langle\Awf,\Bwf\rangle$ is an \emph{$\Iwf$-gap} (or a gap in $\Pwf(\omega)/\Iwf$) if it is $\Iwf$-orthogonal and no subset of $\omega$ separates it. When $|\Awf|=\kappa$ and $|\Bwf|=\lambda$, we say that the pair is an \emph{$\Iwf$-$(\kappa,\lambda)$-gap.} An $\Iwf$-$(\omega,\lambda)$-gap is called an \emph{$\Iwf$-Rothberger gap}.
      \item Denote the cardinal number $\bfrak(\Iwf)$ as the least cardinal number $\lambda$ such that there exists an $\Iwf$-$(\omega,\lambda)$-gap. We call this the \emph{Rothberger number of $\Iwf$.}
   \end{enumerate}
\end{definition}

To avoid inconsistencies with the notation, we use the term \emph{linear $(\kappa,\lambda)$-gap} for gaps of type $(\kappa,\lambda)$ (as given in the introduction), while \emph{$(\kappa,\lambda)$-gap} refers to Definition \ref{DefGaps}. However, as justified by the following result, it does not matter which notion of gap is used to define the Rothberger number of an ideal.

\begin{lemma}\label{Eqvb(I)}
   In the definition of $\bfrak(\Iwf)$ as the least $\lambda$ such that there exists an $\Iwf$-gap $\langle\Awf,\Bwf\rangle$ with $|\Awf|=\aleph_0$ and $|\Bwf|=\lambda$, the following restrictions on $\Awf$ and $\Bwf$ can be done without affecting the value of $\bfrak(\Iwf)$.
   \begin{enumerate}[(I)]
      \item $\Awf$ can either be
          \begin{enumerate}[(i)]
             \item a disjoint family, even a partition of $\omega$,
             \item a $\subseteq$-increasing sequence of length $\omega$, even with union equal to $\omega$, \emph{or}
             \item a $\subseteq$-increasing, $\subsetneq_\Iwf$-increasing sequence of length $\omega$, even with union equal to $\omega$.
          \end{enumerate}
          Moreover, it can be assumed that all the members of $\Awf$ are $\Iwf$-positive.
      \item $\Bwf$ can either be
          \begin{enumerate}[(i)]
             \item a $\subseteq_\Iwf$-increasing sequence of length $\lambda$ \emph{or}
             \item a $\subsetneq_\Iwf$ increasing sequence of length $\lambda$.
          \end{enumerate}
          Moreover, it can be assumed that all the members of $\Bwf$ are $\Iwf$-positive.
   \end{enumerate}
\end{lemma}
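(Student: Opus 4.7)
The direction $\bfrak(\Iwf) \leq \bfrak_{\mathrm{rest}}(\Iwf)$ is immediate from the definitions since any restricted gap is still a gap; the plan is to prove the reverse inequality by starting from a minimal witness $\langle\{A_n\}_{n<\omega},\Bwf\rangle$ with $|\Bwf|=\bfrak(\Iwf)$ and reshaping it, in several steps, into a restricted gap of size $\leq \bfrak(\Iwf)$. Equality then follows by minimality, and therefore each restricted variant computes the same cardinal.

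First I would dispose of the restrictions on $\Awf$. Positivity of the $A_n$'s is a soft observation: any $A_n\in\Iwf$ contributes nothing to the separator condition $A_n\cap C\in\Iwf$, so the $\Iwf$-positive $A_n$'s alone still witness the gap with $\Bwf$, and infinitely many of them must survive (otherwise $\omega$ minus a finite union of positives would separate). Disjointification via $A'_n:=A_n\setminus\bigcup_{k<n}A_k$ preserves the gap because one checks inductively that $A_n\subseteq\bigcup_{k\leq n}A'_k$, so every separator of $\{A'_n\}$ is also a separator of $\{A_n\}$. For the partition form, set $X:=\bigcup_n A'_n$ and $Y:=\omega\setminus X$: the crucial point is that $\langle\{A'_n\},\{B\cap X:B\in\Bwf\}\rangle$ is still a gap — a separator $C$ of this new pair yields $C\cup Y$ as a separator of $\langle\{A'_n\},\Bwf\rangle$ — after which $Y$ can be absorbed into $A'_0$ without disturbing orthogonality. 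Options (I)(ii) and (I)(iii) then follow from the partial unions $A''_n:=\bigcup_{k\leq n}A'_k$: these are $\subseteq$-increasing with union $\omega$, and positivity of the $A'_n$'s automatically delivers the strict $\Iwf$-increase.

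The restrictions on $\Bwf$ are the more substantial part. Positivity of the $B$'s is again soft, since any $B\in\Iwf$ satisfies $B\subseteq_\Iwf C$ for every $C$ and so imposes no constraint. The main step is (II)(i), the $\subseteq_\Iwf$-increasing form, which I would obtain by transfinite recursion on $\alpha<\lambda:=\bfrak(\Iwf)$. Enumerate $\Bwf=\{B_\alpha:\alpha<\lambda\}$; given $\{B^*_\beta:\beta<\alpha\}$, observe that the pair $\langle\Awf,\{B^*_\beta:\beta<\alpha\}\cup\{B_\alpha\}\rangle$ is $\Iwf$-orthogonal and has right-side cardinality $|\alpha|+1<\lambda$, so by minimality of $\lambda$ it is \emph{not} a gap and admits a separator, which I take as $B^*_\alpha$. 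Then $\{B^*_\alpha:\alpha<\lambda\}$ is $\subseteq_\Iwf$-increasing and $\Iwf$-orthogonal to $\Awf$, and since $B_\alpha\subseteq_\Iwf B^*_\alpha$ for every $\alpha$, any separator of $\langle\Awf,\{B^*_\alpha\}\rangle$ would separate the original gap. The strict version (II)(ii) follows by thinning to a cofinal subsequence of pairwise $\not\sim_\Iwf$-representatives, and yet another minimality argument shows that this subsequence still has length $\lambda$: otherwise the chain would be $\sim_\Iwf$-constant past some stage $\alpha_0$, supplying a single set that separates $\langle\Awf,\Bwf\rangle$ modulo a $<\lambda$-initial segment and producing a gap of size strictly below $\lambda$.

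The main obstacle is the recursive construction above: its success rests entirely on the fact that at every intermediate stage the partial family has cardinality strictly less than $\bfrak(\Iwf)$, so the very definition of $\bfrak(\Iwf)$ as a \emph{minimum} forces the existence of a separator to hand off as $B^*_\alpha$. Everything else in the lemma amounts to routine bookkeeping around orthogonality and separator-extension, and the combined ``moreover'' clauses are obtained by applying the positivity reduction first and then the chain constructions, noting that separators chosen above a positive $B_\alpha$ are automatically positive.
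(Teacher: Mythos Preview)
The paper states this lemma without proof, so there is nothing to compare your approach against; your argument is essentially correct and is the natural one. Two small points are worth tightening. First, in (I) the order of operations matters for the ``moreover'' clause: disjointifying positive sets can produce sets in $\Iwf$, so you should disjointify first, then discard the non-positive $A'_n$ (arguing as you do that infinitely many survive), and only then absorb the complement into one piece to obtain a partition into positive sets. Second, your sentence for (II)(ii) is slightly garbled: if the $\sim_\Iwf$-classes number fewer than $\lambda$, then either there is a last class (so the chain is eventually constant and its stable value separates the \emph{entire} original pair, not just ``modulo a $<\lambda$-initial segment''), or the class representatives are cofinal in $\lambda$ and form a gap of size $<\lambda$, contradicting minimality. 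Regularity of $\bfrak(\Iwf)$, which you are implicitly using to collapse these cases, follows from the same minimality trick and is not circular.
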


Many ideals on $\omega$ can be defined in terms of submeasures. Recall that, for a set $Y$, $\varphi:\Pwf(Y)\to[0,+\infty]$ is a \emph{submeasure on $\Pwf(Y)$} if $\varphi(\varnothing)=0$, $\varphi(x)<+\infty$ for any finite $x\subseteq Y$, and $\varphi$ is $\subseteq$-increasing and finitely subadditive, that is, $\varphi(A\cup B)\leq\varphi(A)+\varphi(B)$. If $Y=\omega$, a submeasure $\varphi$ on $\Pwf(\omega)$ is \emph{lower semicontinuous} if $\varphi(x)=\lim_{n\to+\infty}\varphi(x\cap n)$.

\begin{theorem}\label{idealmeas}
   Let $\Iwf$ be an ideal on $\omega$.
   \begin{enumerate}[(1)]
      \item \emph{(Mazur \cite{mazur})} $\Iwf$ is $F_\sigma$ iff there is a lower semicontinuous submeasure $\varphi$ on $\Pwf(\omega)$ such that $\Iwf=\mathrm{Fin}(\varphi):=\left\{ x\subseteq\omega\ /\ \varphi(x)<+\infty  \right\}$.
      \item \emph{(Solecki \cite{solecki,solecki2})} $\Iwf$ is an analytic P-ideal iff there is a lower semicontinuous submeasure $\varphi$ on $\Pwf(\omega)$ such that $\Iwf=\mathrm{Exh}(\varphi):=\{ x\subseteq\omega\ /\ \lim_{n\to\infty}\varphi(x\menos n)=0\}$. In particular, all analytic P-ideals are $F_{\sigma\delta}$.
   \end{enumerate}
\end{theorem}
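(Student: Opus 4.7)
Since these are two classical characterizations that the paper cites rather than proves, my sketch targets how one would reconstruct them from scratch.

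For part (1), the plan is to handle both implications in parallel via the level sets of $\varphi$. The easy direction $(\Leftarrow)$ is direct: for each $n$, set $F_n = \{x \in \Pwf(\omega) : \varphi(x) \leq n\}$, and observe that $F_n$ is closed in $2^\omega$ because if $x_k \to x$ pointwise and each $\varphi(x_k) \leq n$, then for every $m$ we have $x_k \cap m = x \cap m$ for large $k$, so $\varphi(x \cap m) \leq n$, and lower semicontinuity gives $\varphi(x) = \lim_m \varphi(x \cap m) \leq n$; then $\mathrm{Fin}(\varphi) = \bigcup_n F_n$ is $F_\sigma$. For the harder direction $(\Rightarrow)$, write $\Iwf = \bigcup_n F_n$ with each $F_n$ closed. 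First regularize: replace $F_n$ by its $\subseteq$-downward closure inside $\Iwf$ (still closed, by a compactness argument on $2^\omega \times 2^\omega$ with the union map) and assume $F_n \subseteq F_{n+1}$. Then set $\varphi(s) = \inf\{\sum_{i\leq k}(n_i+1) : s = s_1 \cup \cdots \cup s_k,\ s_i \in F_{n_i}\}$ for finite $s$, which is manifestly subadditive and monotone, and extend by $\varphi(x) = \sup_m \varphi(x \cap m)$, ensuring lower semicontinuity. One then checks $\mathrm{Fin}(\varphi) = \Iwf$: if $x \in F_n$, downward-closure gives $\varphi(x \cap m) \leq n+1$, and conversely $\varphi(x) \leq M$ forces every $x \cap m$ to lie in the closed set $F_{M-1}^{\cup M}$ of $\leq M$-fold unions from $F_{M-1}$, and closedness of this set together with $x \cap m \to x$ puts $x$ inside it, hence inside $\Iwf$.

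For part (2), the $(\Leftarrow)$ direction is similarly routine. The set $\mathrm{Exh}(\varphi) = \bigcap_{k\geq 1}\bigcup_n \{x : \varphi(x \menos n) \leq 1/k\}$ is $F_{\sigma\delta}$ since each inner bracket is closed by the same lsc argument as above. The P-ideal property is verified by a diagonal/summability argument: given $\{x_n\} \subseteq \mathrm{Exh}(\varphi)$, choose strictly increasing $m_n$ with $\varphi(x_k \menos m_n) < 2^{-n}$ for all $k \leq n$, and let $y = \bigcup_n (x_n \menos m_n)$; then $x_n \menos y \subseteq m_n$ is finite, and splitting $y \menos m_N$ into contributions from indices $n < N$ (bounded by $2^{-N}$ each, $N$ of them) and $n \geq N$ (the geometric tail) shows $\varphi(y \menos m_N) \to 0$.

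The serious work is the $(\Rightarrow)$ direction of (2), which is the core of Solecki's theorem and would be the main obstacle in any genuine proof. Here one would follow Solecki's strategy: exploit the P-ideal property together with analyticity to rule out ``independent'' bad configurations, then build a lower semicontinuous submeasure as a weighted sum $\varphi = \sum_n \varphi_n$ of finitely supported submeasures $\varphi_n$ constructed inductively so that (i) sets outside $\Iwf$ are detected by some $\varphi_n$ being large on their tails, while (ii) sets in $\Iwf$ have submeasure tails tending to zero. The technical heart is a measurable-selection/Talagrand-type lemma that converts the P-ideal property into a uniform witness allowing the construction to be pushed through; this is also where one first establishes that every analytic P-ideal is already $F_{\sigma\delta}$. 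Apart from that single ingredient the rest of the argument is bookkeeping, so I would budget essentially all the difficulty there.
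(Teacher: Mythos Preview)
The paper does not prove this theorem at all; it is stated as background with citations to Mazur and Solecki, and you correctly recognized this. Your reconstruction sketch is sound: the $(\Leftarrow)$ directions and Mazur's $(\Rightarrow)$ are handled correctly (in particular, your compactness argument for the closedness of $M$-fold unions from $F_{M-1}$ is the right move), and you are right that all the real content of part (2) lies in Solecki's $(\Rightarrow)$ direction, which you appropriately flag as the single nontrivial ingredient rather than attempting to reproduce it.
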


\begin{remark}\label{RemGaps}
  \begin{enumerate}[(1)]
     \item The Rothberger number does not exist for maximal ideals because they have no gaps.
     \item Given $X$ an infinite subset of $\omega$, any $\Iwf\frestr X$-gap is an $\Iwf$-gap.
           Therefore, $\bfrak(\Iwf)\leq\bfrak(\Iwf\frestr X)$.
     \item (Hadamard \cite{hadamard}) For any ideal $\Iwf$ on $\omega$, there are no $\Iwf$-$(\omega,\omega)$-gaps. Therefore, if $\bfrak(\Iwf)$ exists, it is uncountable.
     \item If $\bfrak(\Iwf)$ exists, then it is regular.
     \item (Rothberger \cite{rothb}) There exists a $\Fin$-$(\omega,\bfrak)$-gap. Moreover, $\bfrak(\Fin)=\bfrak$.
     \item (Todor\v cevi\'c \cite{todorcevic}) If $\Iwf$ is an analytic P-ideal \emph{or} an $F_\sigma$-ideal, then $\bfrak(\Iwf)\leq\bfrak$.
     \item (Folklore) If $\Iwf$ is an analytic P-ideal, then $\bfrak(\Iwf)=\bfrak$. This follows from Solecki's characterization of analytic P-ideals (see Theorem \ref{idealmeas}(2)) and by an argument similar to the one for $\bfrak(\mathrm{Fin})\geq\bfrak$. Indeed, choose a lower semicontinuous submeasure $\varphi$ such that $\Iwf=\mathrm{Exh}(\varphi)$. Now, let $\langle\Awf,\Bwf\rangle$ be an $\Iwf$-orthogonal pair such that $\Awf=\{A_n\ /\ n<\omega\}$ is a partition of $\omega$ and $|\Bwf|<\bfrak$. Without loss of generality, we may assume that $\Iwf$ is an ideal in $\omega\times\omega$ and $A_n=\{n\}\times\omega$. In this notation, for $x\subseteq\omega\times\omega$, $x\in\mathrm{Exh}(\varphi)$ iff, for all $\epsilon>0$ there exists an $F\subseteq\omega\times\omega$ finite such that $\varphi(x\menos F)<\epsilon$. For $m<\omega$, we denote by $(A_n)_m:=\{(n,k)\in A_n\ /\ k<m\}$. For each $0<l<\omega$ and $B\in\Bwf$, let $g_{B,l}(n)$ be the minimal $m$ such that $\varphi((A_n\cap B)\menos(A_n)_m)<1/(l\cdot2^{n+1})$, which exists because $A_n\cap B\in\Iwf$. As $\{g_{B,l}\ /\ B\in\Bwf,\ 0<l<\omega\}$ has size $<\bfrak$, we can find $g\in\omega^\omega$ that dominates that set of functions. Put $C:=\bigcup_{n<\omega}(A_n)_{g(n)}$. Clearly, $A_n\cap C=(A_n)_{g(n)}\in\Iwf$ for every $n<\omega$, so it remains to show that $B\menos C\in\Iwf$ for any $B\in\Bwf$. Let $0<l<\omega$ and choose $N<\omega$ such that $g_{B,l}(n)\leq g(n)$ for every $n\geq N$. Let $F:=\bigcup_{n<N}(A_n)_{g_{B,l}(n)}$ and note that
         \[\varphi((B\menos C)\menos F)\leq\varphi\big(\bigcup_{n<\omega}(A_n\cap B\menos(A_n)_{g_{B,l}(n)})\big)\leq\sum_{n<\omega}\frac{1}{l\cdot2^{n+1}}=\frac{1}{l},\]
         where the last inequality holds because of the lower semicontinuity of the submeasure.
  \end{enumerate}
\end{remark}

We introduce the following notation. The quantifiers $\forall^\infty$ and $\exists^\infty$ mean, respectively, ``for all but finitely many'' and ``there are infinitely many'', where the index of the quantifier varies over a countable set. $id_\omega$ is the identity function from $\omega$ to $\omega$. For $f,g\in\omega^\omega$ and $c<\omega$, we extend the use of the notation for operations with natural numbers to functions, that is, $f\cdot g$ is the function such that $(f\cdot g)(i)=f(i)\cdot g(i)$, $(f^g)(i)=f(i)^{g(i)}$, $(cf)(i)=c\cdot f(i)$, etc. We may use this notation for real valued functions as well. Also, natural numbers may represent constant functions, that is, a natural number $n$ may represent the constant function from $\omega$ to $\{n\}$. This will be clear from the context.

We define a particular case of $F_\sigma$-ideals that will fit our purposes to obtain, consistently, ideals with Rothberger number strictly below $\bfrak$.

\begin{definition}[{\cite[Def. 2.1]{hrusakzap}}]\label{DefFragId}
  \begin{enumerate}[(1)]
     \item An ideal $\Iwf$ is \emph{fragmented} if there exists a partition $\{a_i\}_{i<\omega}$ of $\omega$ into non-empty finite sets and, for each $i<\omega$, a submeasure $\varphi_i:\Pwf(a_i)\to[0,+\infty)$ such that $x\in\Iwf$ iff $\{\varphi_i(x\cap a_i)\}_{i<\omega}$ is bounded (in $[0,+\infty)$). In this case, we say that $\Iwf=\Iwf\langle a_i,\varphi_i\rangle_{i<\omega}$. Writing $\bar{\varphi}(x)=\sup_{i<\omega}\{\varphi_i(x\cap a_i)\}$, $\bar{\varphi}$ turns out to be a lower semicontinuous submeasure on $\Pwf(\omega)$ with $\Iwf=\mathrm{Fin}(\bar{\varphi})$. Thus, any fragmented ideal is $F_\sigma$.
     \item A fragmented ideal $\Iwf=\Iwf\langle a_i,\varphi_i\rangle_{i<\omega}$ is \emph{gradually fragmented} if, for any $k<\omega$, there exists an $m\in\omega$ such that
         \[\forall_{l<\omega}\forall^\infty_{i<\omega}\forall_{B\subseteq\Pwf(a_i)}\big[\big(|B|\leq l\textrm{\ and }\forall_{b\in B}(\varphi_i(b)\leq k)\big)\imp\varphi_i(\bigcup B)\leq m\big].\]
         In this case, a function $f:\omega\to\omega$ \emph{witnesses the gradual fragmentation of $\Iwf$} if, for any $k<\omega$, $f(k)$ satisfies the same property as $m$ above.
  \end{enumerate}
\end{definition}

\begin{remark}\label{RemFragId}
  \begin{enumerate}[(1)]
     \item $|B|\leq l$ can be replaced by $|B|=l$ in the equation that describes gradually fragmented ideal. Also, $B$ can be restricted to pairwise disjoint families.
     \item A dichotomy proved in \cite[Thm 2.4]{hrusakzap} implies that the gradual fragmentation of a fragmented ideal does not depend on the partition and the sequence of submeasures that witness the fragmentation.
     \item If $\varphi:\Pwf(Y)\to[0,+\infty]$ is a submeasure, then $\varphi'(x)=\lceil\varphi(x)\rceil$ (least integer above $\varphi(x)$ if it is $<+\infty$, or else, it is $+\infty$) is also a submeasure. Therefore, if $\Iwf=\Iwf\langle a_i,\varphi_i\rangle_{i<\omega}$ is a fragmented ideal, with $\varphi'_i:\Pwf(a_i)\to\omega$ such that $\varphi'_i(x)=\lceil\varphi_i(x)\rceil$, we get that $\Iwf=\Iwf\langle a_i,\varphi'_i\rangle_{i<\omega}$.
     \item If $c$ is a positive real, then $\Iwf\langle a_i,\varphi_i\rangle_{i<\omega}=\Iwf\langle a_i,c\varphi_i\rangle_{i<\omega}$.
     \item The previous facts imply that, if $\Iwf$ is a fragmented ideal but is not gradually fragmented, then we can find $\langle a_i,\varphi_i\rangle_{i<\omega}$ such that $\Iwf=\Iwf\langle a_i,\varphi_i\rangle_{i<\omega}$ and, for all $m<\omega$, there exists an $l<\omega$ such that
         \[\exists^\infty_{i<\omega}\exists_{B\subseteq\Pwf(a_i)}\big(|B|=l\textrm{, }\forall_{b\in B}(\varphi_i(b)=1)\textrm{\ and\ }\varphi_i(\bigcup B)>m\big).\]
     \item A fragmented ideal may be trivial, e.g., choose any partition of $\omega$ into non-empty finite sets and use the zero-measure in each piece of the partition. The trivial ideal clearly is gradually fragmented. A fragmented ideal $\Iwf\langle a_i,\varphi_i\rangle_{i<\omega}$ is not trivial
         iff $\forall_{k<\omega}\exists^\infty_{i<\omega}(\varphi_i(a_i)>k)$.
  \end{enumerate}
\end{remark}

\begin{example}\label{ExpFragId}
  \begin{enumerate}[(1)]
    \item Given a finite set $Y$ and a real number $c\geq2$, $\varphi(x)=\log_c(|x|+1)$ defines a submeasure on $\Pwf(Y)$.
    \item Given $c\in\omega^\omega$, $c\geq2$ that converges to infinity and any partition $P=\{a_i\}_{i<\omega}$ of $\omega$ into non-empty finite sets, denote by $\Iwf_c(P):=\Iwf\langle a_i,\varphi_i\rangle_{i<\omega}$ where $\varphi_i(x)=\log_{c(i)}(|x|+1)$ for $x\subseteq a_i$. In view of Remark \ref{RemFragId}(3), $\varphi_i(x)$ can also be defined as the least $k<\omega$ such that $|x|<c(i)^k$.

        This ideal is gradually fragmented. Indeed, $f:\omega\to\omega$, $f(k)=k+1$ witnesses the gradual fragmentation of the ideal, as
        \begin{multline*}
          \forall_{l<\omega}\forall_{i,c(i)\geq l}\forall_{B\subseteq\Pwf(a_i)}\big[\big(|B|\leq l\textrm{\ and\ }\forall_{x\in B}(|x|< c(i)^k)\big)\\
          \imp|\bigcup B|< c(i)^{k+1}\big]
        \end{multline*}
        This ideal is not trivial iff $\forall_{k<\omega}\exists^\infty_{i<\omega}(|a_i|\geq c(i)^k)$.

        This is a generalization of the polynomial growth ideal $\Iwf_P$, which is
        $\Iwf_c(\{ a_i\}_{i<\omega})$ where $\{a_i\}_{i<\omega}$ is the interval partition of $\omega$ such that $|a_i|=2^i$ and $c=\max\{id_\omega,2\}$.
    \item An equivalent definition of the ideal $\mathcal{ED}_\textrm{fin}$ mentioned in the Introduction is given by $\mathcal{ED}_\textrm{fin}=\Iwf\langle a_i,\varphi_i\rangle_{i<\omega}$ where $\{a_i\}_{i<\omega}$ is the interval partition such that $|a_i|=i+1$ and $\varphi_i(x)=|x|$ for $x\subseteq a_i$. To see that it is not gradually fragmented note that, for every $m<\omega$, $l>m$ and $i\geq l$, if $B\subseteq\Pwf(a_i)$ is a disjoint family of size $l$ and $\forall_{x\in B}(|x|=1)$, then $|\bigcup B|=l>m$.
    \item Let $g:\omega\to\omega\menos\{0\}$ and $\{a_i\}_{i<\omega}$ a partition of $\omega$ into non-empty finite sets. Define $\Iwf=\Iwf\langle a_i,\varphi_i\rangle_{i<\omega}$ where $\varphi_i(x)=|x|/g(i)$.
        If the ideal is non-trivial, that is, the sequence of reals $\{|a_i|/g(i)\}_{i<\omega}$ is not bounded, then $\Iwf$ is not gradually fragmented. Indeed, for $m<\omega$, $l>m$ and those $i$ such that $|a_i|/g(i)\geq l$ (there are infinitely many such $i$), whenever $B\subseteq\Pwf(a_i)$ is a disjoint family of size $l$ such that $\forall_{b\in B}(|b|=g(i))$, then $|\bigcup B|/g(i)= l>m$. It is clear that $\mathcal{ED}_\textrm{fin}$ is a particular case of this ideal. Also, the linear growth ideal $\Iwf_L$ is a particular case with $\{a_i\}_{i<\omega}$ the interval partition of $\omega$ such that $|a_i|=2^i$ and $g(i)=i+1$.
    \item Let $g:\omega\to\omega\menos\{0\}$ and $\{a_i\}_{i<\omega}$ a partition of $\omega$ into non-empty finite sets. Define $\Iwf=\Iwf\langle a_i,\varphi_i\rangle_{i<\omega}$ where $\varphi_i(x)=|x|^{1/g(i)}$. Then, $\Iwf$ is gradually fragmented iff $\exists_{m<\omega}\forall_{l<\omega}\forall^\infty_{i<\omega}(\min\{l,|a_i|\}\leq m^{g(i)})$. To prove this first note that, in the case where such $m$ exists, $f(k)=m\cdot k$ witnesses the gradual fragmentation of $\Iwf$. Indeed, for $l<\omega$, let $N<\omega$ be such that $\forall_{i\geq N}(\min\{l,|a_i|\}\leq m^{g(i)})$ so, for $i\geq N$ and $B\subseteq\Pwf(a_i)$ of size $\leq l$ such that all its members have size $\leq k^{g(i)}$, $|\bigcup B|\leq(m\cdot k)^{g(i)}$.

        For the other direction, assume that $\forall_{m<\omega}\exists_{l<\omega}\exists^\infty_{i<\omega}(m^{g(i)}<\min\{l,|a_i|\})$. For $m<\omega$ choose $l<\omega$ and $W\subseteq\omega$ infinite such that $m^{g(i)}<\min\{l,|a_i|\}$ for all $i\in W$. Then, for any $B\subseteq\Pwf(a_i)$ of size $m^{g(i)}+1$ whose members are singletons (such a family exists), $|\bigcup B|>m^{g(i)}$.
  \end{enumerate}
\end{example}

The following discussion about tallness for fragmented ideals will be relevant for certain consistency results and characterizations of these ideals. Recall that an ideal $\Iwf$ on $\omega$ is \emph{tall} if, for every $X\in[\omega]^\omega$, there is an infinite set in $\Iwf\frestr X$. Note that the ideals of Example \ref{ExpFragId} are tall.

\begin{lemma}\label{tallfrag}
   Let $\Iwf=\Iwf\langle a_i,\varphi_i\rangle_{i<\omega}$ be a fragmented ideal. The following are equivalent.
   \begin{enumerate}[(i)]
      \item $\Iwf$ is tall.
      \item There exists a $k$ such that, for every $i<\omega$ and $j\in a_i$, $\varphi_i(\{j\})\leq k$.
      \item $\forall_{m<\omega}\exists_{l>m}\forall_{i<\omega}\forall_{x\subseteq a_i}(\varphi_i(x)>m\imp \exists_{x'\subseteq x}(m<\varphi_i(x')\leq l))$.
      \item The previous formula but with $m=0$.
      \item The formula of (iii) with $\exists_{m<\omega}$ instead of the universal quantifier.
   \end{enumerate}
\end{lemma}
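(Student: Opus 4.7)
The plan is to cycle through (ii) as the central pivot: prove (i) $\iff$ (ii), then (ii) $\Rightarrow$ (iii), use the trivial weakenings (iii) $\Rightarrow$ (iv) and (iii) $\Rightarrow$ (v), and close with singleton tests (iv) $\Rightarrow$ (ii) and (v) $\Rightarrow$ (ii). The key insight is that tallness of a fragmented ideal is equivalent to a uniform bound on the submeasures of singletons, and the remaining conditions (iii)--(v) are just reformulations of this bound via finite subadditivity.

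For (ii) $\Rightarrow$ (i), since each $a_i$ is finite any infinite $X \subseteq \omega$ meets infinitely many pieces $a_i$. For each such $i$ pick one point $j_i \in X \cap a_i$ and set $Y := \{j_i : X \cap a_i \neq \varnothing\}$. Then $Y \subseteq X$ is infinite, and $\varphi_i(Y \cap a_i) \in \{0, \varphi_i(\{j_i\})\}$ is bounded by $k$ for every $i$, so $Y \in \Iwf$. For the contrapositive (i) $\Rightarrow$ (ii), if (ii) fails then the (necessarily finite) quantities $M_i := \max\{\varphi_i(\{j\}) : j \in a_i\}$ are unbounded, so one can choose $i_1 < i_2 < \cdots$ and $j_n \in a_{i_n}$ with $\varphi_{i_n}(\{j_n\}) > n$. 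Any infinite $Y \subseteq \{j_n : n < \omega\}$ then meets $a_{i_n}$ for infinitely many $n$, giving $\varphi_{i_n}(Y \cap a_{i_n}) \geq \varphi_{i_n}(\{j_n\}) > n$ unboundedly often, whence $Y \notin \Iwf$ and $\Iwf$ is not tall.

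For (ii) $\Rightarrow$ (iii), given $m$ I set $l := m + k$ and argue greedily. If $\varphi_i(x) > m$, enumerate $x = \{e_1, \ldots, e_r\}$ and let $y_j := \{e_1, \ldots, e_j\}$. Since $\varphi_i(y_0) = 0 \leq m < \varphi_i(y_r)$, there is a least $j$ with $\varphi_i(y_j) > m$, and finite subadditivity together with (ii) yields $\varphi_i(y_j) \leq \varphi_i(y_{j-1}) + \varphi_i(\{e_j\}) \leq m + k = l$; take $x' := y_j$. The implications (iii) $\Rightarrow$ (iv) (specialize to $m = 0$) and (iii) $\Rightarrow$ (v) (existential weakening) are then immediate.

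Finally, both (iv) $\Rightarrow$ (ii) and (v) $\Rightarrow$ (ii) follow by applying the conclusion to a singleton $x := \{j\} \subseteq a_i$: if $\varphi_i(\{j\}) > m$ (with $m = 0$ for (iv) or the witness $m$ for (v)), then the only candidate $x' \subseteq \{j\}$ with $\varphi_i(x') > m$ must be $\{j\}$ itself, since $\varphi_i(\varnothing) = 0 \leq m$, forcing $\varphi_i(\{j\}) \leq l$; otherwise $\varphi_i(\{j\}) \leq m \leq l$ already. Hence $l$ serves as the uniform bound $k$ of (ii). No step presents a real obstacle; the only place where something beyond finiteness of the $a_i$ is used is the greedy construction in (ii) $\Rightarrow$ (iii), where finite subadditivity of $\varphi_i$ is exactly what prevents $x'$ from overshooting $l$.
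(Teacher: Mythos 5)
Your proof is correct and logically complete, but it is organized differently from the paper's. The paper runs a single cycle $(i)\Rightarrow(ii)\Rightarrow(iii)\Rightarrow(iv)\Rightarrow(v)\Rightarrow(i)$, with the closing implication $(v)\Rightarrow(i)$ established by directly constructing, inside an arbitrary infinite $W$, an infinite set in $\Iwf$: for each $i$, if $\varphi_i(W\cap a_i)>m$ extract a $y_i\subseteq W\cap a_i$ with submeasure in $(m,l]$, otherwise take $y_i=W\cap a_i$, and check that $y=\bigcup y_i$ is infinite with $\bar\varphi(y)\leq l$. You instead use (ii) as a hub: you prove $(i)\Leftrightarrow(ii)$ head-on, and close the cycle from $(iv)$ and $(v)$ back to $(ii)$ by the singleton test $x=\{j\}$, which forces $x'=\{j\}$ and hence $\varphi_i(\{j\})\leq l$. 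The singleton test is more elementary than the paper's $(v)\Rightarrow(i)$, but you then have to pay for it with a separate (also easy) proof of $(ii)\Rightarrow(i)$ by selecting one point per block. For $(ii)\Rightarrow(iii)$, your greedy construction (add points one at a time until the submeasure first exceeds $m$, bounding the overshoot by subadditivity) is the additive mirror image of the paper's contradiction argument (remove points one at a time); these are essentially the same observation. One pedantic point: your choice $l:=m+k$ needs $k\geq 1$ to guarantee $l>m$; as in the paper, one should note that WLOG $k>0$ (if $k=0$ then $\varphi_i\equiv 0$ and the ideal is trivial, so (iii) holds vacuously).
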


\begin{proof}
   To see (i) implies (ii), assume the negation of (ii). Therefore, we can find $W:=\{j_k\ /\ k<\omega\}\subseteq\omega$ such that $\bar{\varphi}(\{j_k\})>k$ for any $k<\omega$. Then, it is clear that $\Iwf\frestr W$ does not contain infinite sets.

   Assume (ii) to prove (iii). Let $k>0$ be as in (ii). Now, for $m<\omega$, $l=m+k$ works. By contradiction, assume that there are $i<\omega$ and $x\subseteq a_i$ such that $\varphi_i(x)>m$ and all its subsets have submeasure not in $(m,m+k]$. In particular,
   $\varphi_i(x)>m+k$. When extracting one point of $x$, its submeasure is still bigger than $m$ and, then, bigger than $m+k$. By repeating this process, we get $\varphi_i(\varnothing)>m+k$ at the end, which is a contradiction.

   To finish, we prove (v) implies (i). Let $m$ and $l>m$ be as in (v) and assume that $W\subseteq\omega$ is infinite. Now, for each $i<\omega$, if $\varphi_i(W\cap a_i)>m$ then there exists a $y_i\subseteq W\cap a_i$ with submeasure in $(m,l]$. If $\varphi_i(W\cap a_i)\leq m$, put $y_i=W\cap a_i$. Then, $y:=\bigcup_{i<\omega}y_i\subseteq W$ is infinite and $\bar{\varphi}(y)\leq l$, so $y\in\Iwf\frestr W$.
\end{proof}

\begin{corollary}\label{tallfragnotP-ideal}
   Any non-trivial tall fragmented ideal is not a P-ideal.
\end{corollary}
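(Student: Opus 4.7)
The plan is to argue by contradiction, assuming $\Iwf=\Iwf\langle a_i,\varphi_i\rangle_{i<\omega}$ is non-trivial, tall, and a P-ideal, and to construct a countable family in $\Iwf$ that admits no pseudo-union in $\Iwf$. The rough idea is to use tallness to extract, for each level $k<\omega$, subsets of the $a_i$'s whose $\varphi_i$-submeasure strictly exceeds $k$ while remaining uniformly bounded in $i$; a P-ideal pseudo-union would then be forced to have $\bar{\varphi}$-submeasure above $k$ for every $k$, which is impossible.

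To carry this out, I would first invoke Lemma~\ref{tallfrag}(iii): tallness guarantees that, for each $k<\omega$, there exists $l_k>k$ such that whenever $x\subseteq a_i$ satisfies $\varphi_i(x)>k$, some $x'\subseteq x$ has $k<\varphi_i(x')\le l_k$. By Remark~\ref{RemFragId}(6), non-triviality makes $W_k:=\{i<\omega \mid \varphi_i(a_i)>k\}$ infinite for every $k$. Applying the extraction with $x=a_i$ for each $i\in W_k$, pick $x_i^k\subseteq a_i$ with $k<\varphi_i(x_i^k)\le l_k$, and set $X_k:=\bigcup_{i\in W_k} x_i^k$. Since $\varphi_i(X_k\cap a_i)=\varphi_i(x_i^k)\le l_k$ for $i\in W_k$ and equals $0$ otherwise, $\bar{\varphi}(X_k)\le l_k<\infty$, so $X_k\in \Iwf$.

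Next, if $\Iwf$ were a P-ideal, some $X\in \Iwf$ would satisfy $X_k\subseteq^* X$ for every $k$; write $K:=\bar{\varphi}(X)<\infty$, so that $\varphi_i(X\cap a_i)\le K$ for all $i$. Take $k=K$. Since $X_k\setminus X$ is finite and the $a_i$'s are pairwise disjoint, $X_k\setminus X$ intersects $a_i$ only for finitely many $i\in W_k$; consequently, for cofinitely many $i\in W_k$ one has $x_i^k\subseteq X\cap a_i$, whence $\varphi_i(X\cap a_i)\ge \varphi_i(x_i^k)>k=K$, contradicting $\bar{\varphi}(X)=K$.

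The only non-trivial point is the appeal to condition (iii) of Lemma~\ref{tallfrag}, which is precisely what guarantees a uniform upper bound $l_k$ on the $\varphi_i$-submeasures of the extracted pieces $x_i^k$; weaker formulations like (ii) would not by themselves yield an upper bound that is independent of $i$. With this uniform bound in hand, the pseudo-union argument is a routine diagonalization and requires no further delicate combinatorics.
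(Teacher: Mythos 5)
Your proof is correct and rests on the same key lemma as the paper's (Lemma~\ref{tallfrag}(iii), extracting pieces of $\varphi_i$-submeasure in a window $(k,l_k]$), but your execution is genuinely simpler. The paper arranges for the extracted pieces $x_i^k$ to be \emph{pairwise disjoint in $k$} inside each $a_i$, which forces it to choose an infinite set $L$ where $\varphi_i(a_i)$ is strictly increasing, a strictly increasing scale $\{l_k\}$, and thresholds $\{N_k\}$ with $\varphi_i(a_i)>\sum_{j\le k}l_j+l_k$ so that after removing $\bigcup_{j<k}x_i^j$ there is still submeasure $>l_k$ left in $a_i$ to apply Lemma~\ref{tallfrag}(iii) again. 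You observe, correctly, that none of this disjointness bookkeeping is needed: for each $k$ one only needs a set $X_k\in\Iwf$ (uniform bound $\bar\varphi(X_k)\le l_k$) whose traces $X_k\cap a_i$ have $\varphi_i$-submeasure $>k$ for \emph{infinitely many} $i$, and a pseudo-union $X$ would have to contain cofinitely many of those traces for the specific $k$ exceeding $\bar\varphi(X)$, yielding the contradiction directly. Combined with Remark~\ref{RemFragId}(6), which indeed gives $W_k$ infinite, the argument is complete. The only thing to tidy is the line ``take $k=K$'': $K=\bar\varphi(X)$ need not be an integer, so take $k:=\lceil K\rceil$ (or any natural $k\ge K$); the rest goes through unchanged.
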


\begin{proof}
   Let $\Iwf=\Iwf\langle a_i,\varphi_i\rangle_{i<\omega}$ be a non-trivial tall fragmented ideal. Find $L\subseteq\omega$ infinite such that $\{\varphi_i(a_i)\}_{i\in L}$ is strictly increasing. Choose $\{l_k\}_{k<\omega}$ strictly increasing by applying, recursively, Lemma \ref{tallfrag}(iii) and starting with $l_{0}=0$. Also, construct a strictly increasing sequence $\{N_k\}_{k<\omega}$ of natural numbers such that
   $\varphi_i(a_i)>\sum_{j\leq k}l_j+l_k$ for all $i\geq N_k$, $i\in L$. By recursion on $k$, choose $x^k_i\subseteq a_i$ for $i\geq N_k$, $i\in L$ such that $l_k<\varphi_i(x^k_i)\leq l_{k+1}$ and $x^k_i\cap x^j_i=\varnothing$ for each $j<k$. Indeed, for $i\geq N_k$, $\varphi_i(\bigcup_{j<k}x_i^j)\leq\sum_{j\leq k}l_j$, so its complement with respect to $a_i$ has
   submeasure bigger than $l_k$. Thus, by Lemma \ref{tallfrag}(iii), there exists an $x_i^k\subseteq a_i\menos\bigcup_{j<k}x_i^j$ as required.

   Put $x^k:=\bigcup\{ x^k_i\ /\ i\geq N_k\textrm{\ and\ }i\in L\}$, which is clearly in $\Iwf$. Now, if $y\subseteq\omega$ is such that $x^k\subseteq^* y$ for all $k<\omega$, we get that $l_k<\varphi_{i_k}(x^k_{i_k})$ for some $i_k\in L$ such that $x^k_{i_k}\subseteq y$. Therefore, $y\notin\Iwf$.
\end{proof}

The following notion is relevant for characterizing the fragmented ideals that can, consistently, have Rothberger number strictly less than $\bfrak$.

\begin{definition}\label{Defpseudo-tall}
   An ideal $\Iwf$ on $\omega$ is \emph{somewhere tall} if there exists an $\Iwf$-positive $X\subseteq\omega$ such that
   $\Iwf\frestr X$ is tall. An ideal is \emph{nowhere tall} if it is not somewhere tall.
\end{definition}

Corollary \ref{tallfragnotP-ideal} implies directly that any somewhere tall fragmented ideal is not a P-ideal. On the other hand, nowhere tall fragmented ideals can be simply characterized. For example, a fragmented ideal $\Iwf=\Iwf\langle a_i,\varphi_i\rangle_{i<\omega}$ where $\{|a_i|\}_{i<\omega}$ is bounded is nowhere tall. Indeed, if $X\subseteq\omega$ is $\Iwf$-positive then it is necessary that $\{\bar{\varphi}(\{j\})\}_{j\in X}$ is unbounded and, by Lemma \ref{tallfrag}, $\Iwf\frestr X$ is not tall. A converse of this and the mentioned characterization is stated as follows.

\begin{lemma}\label{fragnotpseudo-tallchar}
   If $\Iwf$ is a nowhere tall fragmented ideal on $\omega$, then $\Iwf=\Iwf\langle a_i,\varphi_i\rangle_{i<\omega}$ where $a_i=\{i\}$. Moreover, $\Iwf$ is gradually fragmented and can only be one of the following ideals:
   \begin{enumerate}[(i)]
      \item $\Iwf=\{ x\subseteq\omega\ /\ x\subseteq^* A \}$ for some $A\subseteq\omega$, or
      \item $\Iwf$ is the ideal generated by some infinite partition of $\omega$ into infinite sets.
   \end{enumerate}
\end{lemma}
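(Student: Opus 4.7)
My plan is to first reduce to a singleton fragmentation $a_i=\{i\}$ and then read off both the gradual fragmentation and the classification directly from the resulting sequence of point-masses $c_i:=\varphi_i(\{i\})$.

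For the reduction, starting from any presentation $\Iwf=\Iwf\langle a_i,\varphi_i\rangle_{i<\omega}$, define $\psi_j(\{j\}):=\bar{\varphi}(\{j\})$ and put $\Iwf':=\Iwf\langle\{j\},\psi_j\rangle_{j<\omega}$, which is just $\{x\subseteq\omega : \{\bar{\varphi}(\{j\})\}_{j\in x}\text{ is bounded}\}$. The inclusion $\Iwf\subseteq\Iwf'$ is immediate from monotonicity, since $\bar{\varphi}(\{j\})\leq\bar{\varphi}(x)$ for every $j\in x$. For the converse, suppose towards a contradiction that $x\in\Iwf'\menos\Iwf$; then $x$ is $\Iwf$-positive and $\Iwf\frestr x$ is itself fragmented via the partition $\{a_i\cap x\}_{i<\omega}$ together with the restricted submeasures $\varphi_i\frestr\Pwf(a_i\cap x)$. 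The assumption $x\in\Iwf'$ says precisely that $\{\bar{\varphi}(\{j\})\}_{j\in x}$ is bounded, so Lemma~\ref{tallfrag} (equivalence of (i) and (ii)) applied to this fragmentation of $\Iwf\frestr x$ forces $\Iwf\frestr x$ to be tall, contradicting the nowhere-tallness of $\Iwf$.

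Once $\Iwf=\Iwf\langle\{i\},\psi_i\rangle_{i<\omega}$, gradual fragmentation is automatic: for every $i$ each $B\subseteq\Pwf(\{i\})$ has at most two elements and $\psi_i(\bigcup B)\leq\psi_i(\{i\})$, so the identity already witnesses gradual fragmentation. For the classification, set $X_k:=\{i<\omega : c_i\leq k\}$, so that $\Iwf=\{x : x\subseteq X_k\text{ for some }k\}$ with $X_0\subseteq X_1\subseteq\cdots$ and $\bigcup_k X_k=\omega$ (the latter because $c_i<+\infty$ for each $i$). Non-triviality forces each $X_k$ to be a proper subset of $\omega$, and a short argument rules out any $X_k$ being cofinite: otherwise the finitely many points outside $X_k$ would lie in a common $X_M$, giving $X_M=\omega$ and collapsing the ideal. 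I then split into two cases. If some $X_{k_0}$ satisfies $X_l\menos X_{k_0}$ finite for every $l$, a direct check gives $\Iwf=\{x : x\subseteq^* X_{k_0}\}$, which is outcome~(i) with $A=X_{k_0}$. Otherwise, for every $k$ some $X_l\menos X_k$ is infinite, and I recursively pick $M_0<M_1<\cdots$ with $X_{M_0}$ infinite and each $X_{M_{n+1}}\menos X_{M_n}$ infinite; setting $B_0:=X_{M_0}$ and $B_{n+1}:=X_{M_{n+1}}\menos X_{M_n}$ gives, using $M_n\to\infty$, a partition of $\omega$ into infinitely many infinite sets whose generated ideal coincides with $\Iwf$, yielding outcome~(ii).

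I expect the singleton refragmentation to be the main obstacle. It is the only step that uses the nowhere-tallness hypothesis, and having the ideal in this normal form is what makes gradual fragmentation trivial and reduces the classification to a standard dichotomy on the increasing chain $\{X_k\}_{k<\omega}$ of subsets of $\omega$.
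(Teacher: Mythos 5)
Your proof is correct and follows essentially the same route as the paper's: you apply Lemma~\ref{tallfrag}(i)$\Leftrightarrow$(ii) to $\Iwf$-positive sets to force the singleton refragmentation, then observe the identity witnesses gradual fragmentation, and finally classify via the sublevel sets of the point-masses. The only cosmetic difference is that you work with the cumulative sets $X_k=\{i:c_i\leq k\}$ and explicitly build the partition in case (ii), whereas the paper works with the exact-level sets $I_m=\{i:c_i=m\}$ and leaves the merging implicit — the dichotomy is the same.
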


\begin{proof}
   Let $\Iwf=\Iwf\langle a'_i,\varphi'_i\rangle_{i<\omega}$. By Lemma \ref{tallfrag}(ii), $\Iwf$ nowhere tall means that, for any $\Iwf$-positive $X\subseteq\omega$, $\{\bar{\varphi}'(\{j\})\}_{j\in X}$ is unbounded. Therefore, for any $x\subseteq\omega$, $x\in\Iwf$ iff $\{\bar{\varphi}'(\{j\})\}_{j\in x}$ is bounded, so $\Iwf=\langle a_i,\varphi_i\rangle_{i<\omega}$ where $a_i=\{i\}$ and $\varphi_i(\{i\})=\lceil\bar{\varphi}'(\{i\})\rceil$. Here, the identity function witnesses the gradual fragmentation of $\Iwf$. Now, for $m<\omega$, put $I_m:=\{i<\omega\ /\ \varphi_i(\{i\})=m\}$. Note that $\{I_m\}_{m<\omega}$ is a partition of $\omega$ and that $\Iwf$ is generated by this partition. If $I_m$ is infinite for infinitely many $m<\omega$, then we easily get (ii). Otherwise, if there is some $N<\omega$ such that $I_m$ is finite for all $m\geq N$, then we get (i) with $A:=\bigcup_{m<N}I_m$.
\end{proof}

Note that the case (i) gives us a P-ideal, so $\bfrak(\Iwf)=\bfrak$ for such non-trivial $\Iwf$ by Remark \ref{RemGaps}(7). In case (ii), $\Iwf$ is not a P-ideal but we are going to prove in Corollary~\ref{b(notpseudotall)=b} that still $\bfrak(\Iwf)=\bfrak$. In Section \ref{SecConsFrag} we prove that every somewhere tall ideal has, consistently, Rothberger number strictly less than $\bfrak$.

In Sections \ref{SecDestroyGradFragGaps}, \ref{SecPresProp} and \ref{SecConsFrag}, we are going to look at fragmented ideals from the forcing point of view. Note that a fragmented ideal $\Iwf=\Iwf\langle a_i,\varphi_i\rangle_{i<\omega}$ is coded by the real $\langle a_i,\varphi_i\rangle_{i<\omega}$, so the formula $x\in\Iwf$ is clearly $F_\sigma$ and expressions like ``$\Iwf$ is gradually fragmented" and ``$\Iwf$ is tall" (see Lemma \ref{tallfrag}) are arithmetical and, therefore, absolute notions.

We conclude this section with a short review of the known forcing notions that we are going to use throughout this paper. Recall \emph{Hechler forcing} $\Dor$ as the poset whose conditions are ordered pairs $(s,f)$ where $s\in\omega^{<\omega}$, $f\in\omega^\omega$ and $s\subseteq f$. Its order is given by $(t,g)\leq(s,f)$ iff $s\subseteq t$ and $f\leq g$. Clearly, $\Dor$ adds a dominating real over the ground model, that is, a real in $\omega^\omega$ that is an $\leq^*$-upper bound of the ground model reals.

For a function $h\in\omega^\omega$, denote by $S(\omega,h):=\prod_{i<\omega}[\omega]^{\leq h(i)}$, $S_n(\omega,h):=\prod_{i<n}[\omega]^{\leq h(i)}$ and $S_{<\omega}(\omega,h):=\bigcup_{n<\omega}S_n(\omega,h)$. For $x\in\omega^\omega$ and $\psi:\omega\to[\omega]^{<\omega}$, define the relation $x\in^*\psi$ iff $\forall^\infty_{i<\omega}(x(i)\in \psi(i))$. We often refer to functions from $\omega$ to $[\omega]^{<\omega}$ as \emph{slaloms}. If $h$ is a non-decreasing function that converges to infinity, define $\Loc^h$, the \emph{localization forcing for $h$}, as the poset with conditions $(s,F)$ where $s\in S_{<\omega}(\omega,h)$ and $F$ is a finite subset of $\omega^\omega$ such that $|F|\leq h(|s|)$. The order is given by $(s',F')\leq(s,F)$ iff $s\subseteq s'$, $F\subseteq F'$ and, for every $i\in|s'|\menos|s|$, $\{f(i)\ /\ f\in F\}\subseteq s'(i)$. This forcing adds a slalom in $S(\omega,h)$ that $\in^*$-covers all the reals in $\omega^\omega$ of the ground model. This forcing is useful to increase $\add(\Nwf)$ because

\begin{theorem}[{\cite[Thm. 2.3.9]{barju}} Bartoszy\'nski's characterization of $\add(\Nwf)$]\label{BartcharAdd(N)}
   If $h\in\omega^\omega$ converges to infinity, then $\add(\Nwf)$ is the least size of a family $\Fwf\subseteq\omega^\omega$ that cannot be $\in^*$-covered by a single slalom in $S(\omega,h)$.
\end{theorem}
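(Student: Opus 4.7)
The plan is to prove $\add(\Nwf) = \sfrak(h)$, where $\sfrak(h)$ abbreviates the cardinal on the right-hand side, by passing through a correspondence between slaloms in $S(\omega,h)$ and certain canonical $G_\delta$ null sets in $2^\omega$. The argument proceeds in three stages: invariance of $\sfrak(h)$ under the choice of $h$, then the two inequalities.

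First I would show that $\sfrak(h)$ does not depend on the particular $h\to\infty$ chosen. When $h\leq^* h'$ one gets $\sfrak(h)\leq\sfrak(h')$ immediately from the inclusion $S(\omega,h)\subseteq S(\omega,h')$ (modulo finite modifications). The reverse inequality uses a grouping/coding argument: given $h,h'\to\infty$, partition $\omega$ into finite intervals $I_k$ so that $\prod_{i\in I_k}h'(i)\leq h(k)$ for almost all $k$ (possible since $h\to\infty$), fix bijections $\omega^{I_k}\leftrightarrow\omega$, and lift these blockwise into a coding map $\Sigma:\omega^\omega\to\omega^\omega$. An $h$-slalom localizing $\Sigma[\Fwf]$ then decodes coordinate-by-coordinate inside each block into an $h'$-slalom localizing $\Fwf$. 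Having established invariance, I am free to fix any convenient $h$ in what follows.

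Next I would introduce the null-set correspondence. Associate to each $f\in\omega^\omega$ a $G_\delta$ null set $N_f\subseteq 2^\omega$ defined from a fixed interval partition, where the clopen piece at stage $k$ depends only on $f(k)$ and has measure summable in $k$. Dually, to each slalom $\psi\in S(\omega,h)$ attach a canonical null set $N_\psi$ of the form $\bigcap_n\bigcup_{k\geq n}V_{\psi,k}$, the bound $|\psi(k)|\leq h(k)$ ensuring $N_\psi$ is null. The clopen pieces are calibrated so that $f\in^*\psi$ is equivalent to $N_f\subseteq N_\psi$ up to a uniformly null correction. Then $\add(\Nwf)\leq\sfrak(h)$ follows because a non-localizable $\Fwf$ yields $|\Fwf|$ null sets $N_f$ whose union cannot be contained in any $N_\psi$, hence is non-null (by cofinality of $\{N_\psi\}_{\psi\in S(\omega,h)}$ in the null ideal). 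Conversely, for $|\Fwf|<\add(\Nwf)$, the union $\bigcup_{f\in\Fwf}N_f$ is null and therefore contained in some $N_\psi$, and that $\psi$ necessarily $\in^*$-covers $\Fwf$, giving $\sfrak(h)\geq\add(\Nwf)$.

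The main obstacle is the cofinality claim: every null set in $2^\omega$ is, modulo a null correction, contained in some canonical $N_\psi$ with $\psi\in S(\omega,h)$. One must cover the given null set by a sufficiently thin $G_\delta$ of prescribed layered form and then recode its layers blockwise as a slalom of the correct growth. The grouping reduction from the first step is used crucially here, because the slalom growth arising naturally from a direct covering argument is typically not $h$, and one appeals to invariance to transfer between the two.
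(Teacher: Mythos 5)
The paper does not give a proof of this theorem; it only cites \cite[Thm. 2.3.9]{barju}. Your overall plan --- reduce to a convenient $h$ via an invariance argument for $\sfrak(h)$, then exploit a slalom-to-null-set correspondence --- is the standard Bartoszy\'nski approach, but the two inequalities you derive at the end are in fact the same one. The first argument shows that a non-localizable $\Fwf$ has $\bigcup_{f\in\Fwf}N_f$ non-null (using the cofinality claim), so $|\Fwf|\geq\add(\Nwf)$; the ``converse'' shows that if $|\Fwf|<\add(\Nwf)$ then $\bigcup_{f\in\Fwf}N_f$ is null, hence $\Fwf$ is localizable --- which is just the contrapositive. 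Both establish $\sfrak(h)\geq\add(\Nwf)$. The direction $\sfrak(h)\leq\add(\Nwf)$ is not addressed: you must show that any family $\{A_\alpha : \alpha<\kappa\}$ of null sets with $\kappa<\sfrak(h)$ has null union. There the cofinality claim lets you cover $A_\alpha\subseteq N_{\psi_\alpha}$ for suitably thin slaloms $\psi_\alpha$, but to get $\bigcup_\alpha N_{\psi_\alpha}$ null you must then localize the \emph{slaloms} $\psi_\alpha$ themselves, coded blockwise as reals, by a single slalom whose decoded width is still summable; this second application of the localization cardinal --- and the calibration of interval lengths and widths so that the decoded $N_\psi$ remains null --- is exactly where the invariance under $h$ earns its keep, and it does not reduce to the cofinality claim alone. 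As written, your proof only establishes one direction of the equality.

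A smaller slip: the grouping condition $\prod_{i\in I_k}h'(i)\leq h(k)$ in the invariance step runs the wrong way. When an $h$-slalom $\psi$ covering $\Sigma[\Fwf]$ is decoded coordinate-by-coordinate, the set produced at $i\in I_k$ has size at most $|\psi(k)|\leq h(k)$, so to land in $S(\omega,h')$ you need $h(k)\leq h'(i)$ for all $i\in I_k$; this is arranged by pushing $\min I_k$ far enough out, which is possible precisely because $h'\to\infty$. The product bound you wrote forces $h(k)$ to be large rather than small and gives no control of the decoded sets by $h'(i)$.
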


Recall the following strengthenings of the countable chain condition (ccc) for posets. Let $\Por$ be a forcing notion. For $n<\omega$, a subset $P\subseteq\Por$ is \emph{$n$-linked} if, for any $F\subseteq P$ with $|F|\leq n$, there exists a $p\in\Por$ that extends all the members of $F$. We say that $P$ is \emph{centered} if it is $n$-linked for all $n<\omega$. For an infinite cardinal number $\mu$, say that $\Por$ is \emph{$\mu$-linked} if it is equal to a union of $\leq\mu$-many 2-linked subsets. Likewise, $\Por$ is \emph{$\mu$-centered} if it is equal to a union of $\leq\mu$-many centered subsets. For $\mu=\aleph_0$, it is usual to say $\sigma$-linked and $\sigma$-centered, respectively. Clearly, $\mu$-centeredness implies $\mu$-linkedness and $\sigma$-linkedness implies ccc, moreover, it implies the \emph{Knaster condition}, which says that every uncountable subset of $\Por$ has an uncountable 2-linked subset. It is easy to verify that $\Dor$ is $\sigma$-centered and $\Loc^h$ is $\sigma$-linked.

%
%

\section{Ideals with small Rothberger number}\label{SecsmallRoth}

In this section, we present a wide class of fragmented not gradually fragmented ideals that have, provably in $\thzfc$, Rothberger number equal to $\aleph_1$ (Theorem A). In fact, we present two different arguments for this. The first (Theorem~\ref{b(EDfin)=aleph1}), discovered by the first author in 2009, is based on eventually different functions and was used originally to show $\bfrak(\mathcal{ED}_\textrm{fin})=\aleph_1$ (Example \ref{ExpFragId}(3)); in fact it can be used for the ideals in Example \ref{ExpFragId}(4) as well. The second method (Theorems~\ref{b(unifsubmeas)=aleph1} and~\ref{b(meas)=aleph1}), based on independent functions, seems to apply to a larger class of ideals (including those of Examples~\ref{ExpFragId}(3) and (4)). Still, we decided to include the first argument since it may be useful in other contexts.

\begin{theorem}\label{b(EDfin)=aleph1}
   $\bfrak(\mathcal{ED}_\textrm{fin})=\aleph_1$.
\end{theorem}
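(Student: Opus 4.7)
Since $\bfrak(\mathcal{ED}_\mathrm{fin})$ is automatically uncountable (Remark~\ref{RemGaps}(3)) and bounded above by $\bfrak$ (Remark~\ref{RemGaps}(6)), only the inequality $\bfrak(\mathcal{ED}_\mathrm{fin}) \leq \aleph_1$ requires proof. The plan is to construct in ZFC an explicit $(\omega,\omega_1)$-gap in $\Pwf(\Delta)/\mathcal{ED}_\mathrm{fin}$, where $\Delta = \{(k,j) : j \leq k < \omega\}$ with levels $a_k = \{k\} \times (k+1)$, using a pairwise eventually different family $\{f_\alpha : \alpha < \omega_1\}$ with $f_\alpha(k) \leq k$ for every $k$.

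For the countable side, first I would fix a partition $\omega = \bigsqcup_n L_n$ of $\omega$ into infinite sets and set $A_n = \{(k,j) \in \Delta : j \in L_n\}$; then $\Awf = \{A_n : n < \omega\}$ partitions $\Delta$ into $\mathcal{ED}_\mathrm{fin}$-positive pieces, since $|A_n \cap a_k| = |L_n \cap (k+1)|$ tends to infinity with $k$. For the uncountable side, I would build $\{f_\alpha : \alpha < \omega_1\}$ by transfinite recursion: at stage $\alpha$, list $\{f_\beta : \beta < \alpha\}$ as $\{h_m\}_{m<\omega}$ and choose $f_\alpha(k) \in (k+1) \setminus \{h_0(k), \ldots, h_{k-1}(k)\}$, which is non-empty because the forbidden set has at most $k$ elements. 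By construction $\{k : f_\alpha(k) = f_\beta(k)\}$ is finite whenever $\alpha \neq \beta$. Finally set $Y_\alpha = \mathrm{graph}(f_\alpha)$ and $\Bwf = \{Y_\alpha : \alpha < \omega_1\}$; each $Y_\alpha$ is itself in $\mathcal{ED}_\mathrm{fin}$, so the orthogonality $A_n \cap Y_\alpha \in \mathcal{ED}_\mathrm{fin}$ is automatic.

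It remains to rule out any separator $C \subseteq \Delta$. Writing $C_k = \{j : (k,j) \in C\}$, a separator satisfies (a) $|C_k \cap L_n|$ is bounded in $k$ for every $n$ (from $A_n \cap C \in \mathcal{ED}_\mathrm{fin}$), and (b) for each $\alpha$ there is $N_\alpha$ with $f_\alpha(k) \in C_k$ for all $k \geq N_\alpha$ (from $Y_\alpha \subseteq_{\mathcal{ED}_\mathrm{fin}} C$ and the fact that $Y_\alpha$ is a graph). By Fodor/pigeonhole on (b) one extracts an uncountable $S \subseteq \omega_1$ on which $N_\alpha$ is a constant $N$; for $k \geq N$ the values $\{f_\alpha(k) : \alpha \in S\}$ lie in the decomposition $\bigsqcup_n (C_k \cap L_n)$, whose slices have size bounded per-$n$ by (a). A pigeonhole first on the color $c_\alpha(k)$ (the unique $n$ with $f_\alpha(k) \in L_n$), then on the slot within $C_k \cap L_{c_\alpha(k)}$, together with a Ramsey-style selection along $S$, should yield $\alpha \neq \beta$ in $S$ with $f_\alpha(k) = f_\beta(k)$ at infinitely many $k$, contradicting the eventual difference.

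The main obstacle is precisely this last step: the slices $C_k \cap L_n$ are bounded in $k$ for each fixed $n$, but the number of $n$'s with $L_n \cap (k+1) \neq \emptyset$ grows with $k$, so a naive single-$k$ pigeonhole is too weak and the contradiction must come from pigeonholing simultaneously along cofinally many $k$. I anticipate this forces an additional clause in the recursive construction of $f_\alpha$ (for example, controlling the distribution of the colors $c_\alpha(k)$ as a function of $\alpha$) so that some fixed $L_n$ captures the values $f_\alpha(k)$ for uncountably many $\alpha \in S$ at cofinally many $k$; the finite bound from (a) on $|C_k \cap L_n|$ then triggers the coincidence that violates eventual difference. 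The same blueprint should handle the ideals of Example~\ref{ExpFragId}(4), replacing $\varphi_i(x)=|x|$ with $\varphi_i(x)=|x|/g(i)$ throughout.
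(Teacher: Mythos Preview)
Your construction cannot yield a gap as written, and the obstacle is not the one you identify. Since each $Y_\alpha=\mathrm{graph}(f_\alpha)$ meets every level $a_k$ in a single point, $Y_\alpha$ itself lies in $\mathcal{ED}_\mathrm{fin}$; hence $Y_\alpha\setminus C\in\mathcal{ED}_\mathrm{fin}$ holds for \emph{every} $C\subseteq\Delta$, and in particular $C=\varnothing$ separates $\langle\Awf,\Bwf\rangle$. Your claim (b) --- that $Y_\alpha\subseteq_{\mathcal{ED}_\mathrm{fin}}C$ forces $f_\alpha(k)\in C_k$ for all large $k$ --- confuses $\subseteq_{\mathcal{ED}_\mathrm{fin}}$ with $\subseteq^*$: the former only says $\sup_k|(Y_\alpha\setminus C)\cap a_k|<\infty$, which is automatic for a graph. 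So the Fodor/pigeonhole analysis never gets off the ground, and no extra clause in the recursion for $f_\alpha$ can repair this, because the problem is that the entire $\Bwf$-side sits inside the ideal.

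The paper's proof keeps the eventually-different idea but organises it differently. Instead of one family $\{f_\alpha\}$, it builds for \emph{each} column $A_n$ a separate pairwise eventually different family $\{f_{n,\alpha}\}_{\alpha<\omega_1}$ with values in $A_n\cap a_i$, and then diagonalises to form $\subseteq_{\mathcal{ED}_\mathrm{fin}}$-increasing sets $B_\alpha$ that are $\mathcal{ED}_\mathrm{fin}$-positive and satisfy $\ran f_{n,\beta}\subseteq B_\alpha$ for all $\beta<\alpha$ and all but finitely many $n$. The no-separator argument then runs inside each $A_n$: since $|C\cap A_n\cap a_i|\leq l$ for some $l$, along a suitable shrinking sequence of infinite sets of levels at most finitely many $\alpha$ can have $f_{n,\alpha}(i)\in C$ cofinally; taking $\alpha$ above all these finite exceptional sets and a level $i$ in a pseudo-intersection, one finds $k+1$ distinct points $f_{0,\alpha}(i),\ldots,f_{k,\alpha}(i)$ in $a_i\cap B_{\alpha+1}\setminus C$, contradicting $B_{\alpha+1}\subseteq_{\mathcal{ED}_\mathrm{fin}}C$. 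The essential point you are missing is that the $\Bwf$-side must be positive, which is achieved by packing the ranges of \emph{infinitely many} graphs (one per column) into each $B_\alpha$.
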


\begin{proof}
   Consider $\langle a_i,\varphi_i\rangle_{i<\omega}$ as in Example \ref{ExpFragId}(3). Construct a disjoint family $\Awf=\{A_n\ :\ n<\omega\}$ of subsets of $\omega$ such that, for each $n<\omega$, $\lim_{i\to+\infty}|A_n\cap a_i|=+\infty$. To see that this can be done, construct, by induction on $n<\omega$, a $\leq$-increasing sequence $\langle e_n\rangle_{n<\omega}$ of functions in $\omega^\omega$ such that $e_n\leq id_\omega$, $e_n$ converges to infinity and $e_{n+1}-e_n$ converges to infinity. For each $i<\omega$, consider a bijection $g_i:i+1\to a_i$ and put $A_n:=\bigcup_{n<\omega}g_i[[e_n(i),e_{n+1}(i))]$ and $\bar{A}_n:=\bigcup_{k\leq n}A_k$.

   For each $n<\omega$ let $N_n$ be such that $A_n\cap a_i\neq\varnothing$ for every $i\geq N_n$. As $\lim_{i\to+\infty}|A_n\cap a_i|=+\infty$, there exists a pairwise eventually different family of functions $\{f_{n,\alpha}\}_{\alpha<\omega_1}$ in $\prod_{i\geq N_n}(A_n\cap a_i)$, that is, if $\alpha\neq\beta$ then $\forall^\infty_i(f_{n,\alpha}(i)\neq f_{n,\beta}(i))$.

   Construct, by induction, a $\subseteq_{\mathcal{ED}_\textrm{fin}}$-increasing sequence $\Bwf=\{B_\alpha\}_{\alpha<\omega_1}$ that is $\mathcal{ED}_\textrm{fin}$-orthogonal with $\Awf$ and such that $\forall_{\beta<\alpha}\forall^\infty_{n<\omega}(\ran f_{n,\beta}\subseteq B_\alpha)$. Indeed, let $B_0=\varnothing$ and $B_{\alpha+1}=B_\alpha\cup\bigcup_{n<\omega}\ran f_{n,\alpha}$. For the limit step, if $\alpha<\omega_1$ limit, let $B_\alpha=\bigcup_{n<\omega}\big[(B_{\alpha_n}\menos\bar{A}_n)\cup\bigcup_{k<n}\ran f_{n,\beta_k}\big]$ where $\{\alpha_n\}_{n<\omega}$ is a strictly increasing sequence converging to $\alpha$ and $\alpha=\{\beta_k\ /\ k<\omega\}$ is an enumeration. Note that $B_{\alpha_n}\menos B_\alpha\subseteq B_{\alpha_n}\cap\bar{A}_n\in\mathcal{ED}_\textrm{fin}$, $B_\alpha\cap A_n\subseteq\bigcup_{k<n}\big( (B_{\alpha_k} \cap {A}_n) \cup\ran f_{n,\beta_k}\big)\in\mathcal{ED}_\textrm{fin}$ and $\forall_{n>k}(\ran f_{n,\beta_k}\subseteq B_{\alpha})$.

   We claim that $\langle\Awf,\Bwf\rangle$ is an $\mathcal{ED}_\textrm{fin}$-gap. Assume the contrary, so there exists a $C$ that separates $\langle\Awf,\Bwf\rangle$. By recursion on $n<\omega$, construct a decreasing chain $\{X_n\}_{n<\omega}$ of infinite subsets of $\omega$ and $F_n\subseteq\omega_1$ finite such that $\forall_{\alpha\in\omega_1\menos F_n}\forall^\infty_{i\in X_n}(f_{n,\alpha}(i)\notin C)$. Start with $X_{-1}=\omega$. Suppose that $X_n$ has been constructed ($n\geq-1$). As $C\cap A_{n+1}\in\mathcal{ED}_\textrm{fin}$, there exists an $l<\omega$ such that $\forall_{i<\omega}(|C\cap A_{n+1}\cap a_i|\leq l)$. By recursion in $j<\omega$ choose, if possible, $Y_j\subseteq\omega$ infinite and $\alpha_j\in\omega_1\menos\{\alpha_k\ /\ k<j\}$ such that $Y_0=X_n$, $Y_{j+1}\subseteq Y_j$ and $\forall_{i\in Y_{j+1}}(f_{n+1,\alpha_j}(i)\in C)$. Note that this construction must stop at $l$, at the latest, that is, $Y_{l+1}$ and $\alpha_l$ cannot exist. For otherwise, as $\{f_{n+1,\alpha}\}_{\alpha<\omega_1}$ is a sequence of pairwise eventually different functions, there exists an $i\in Y_{l+1}$ such that all $f_{n+1,\alpha_j}(i)$ are different for $j\leq l$ and then, as $\{f_{n+1,\alpha_j}(i)\ /\ j\leq l\}\subseteq C\cap A_{n+1}\cap a_i$, $|C\cap A_{n+1}\cap a_i|>l$, which is impossible. Now, once the construction stops at $l_0\leq l$, $F_{n+1}:=\{\alpha_j\ /\ j<l_0\}$ and $X_{n+1}:=Y_{l_0}$ are as required.

   Let $X$ be a pseudo-intersection of $\{X_n\}_{n<\omega}$, that is, $X\subseteq\omega$ is infinite and $X\subseteq^* X_n$ for all $n<\omega$. Choose $\alpha<\omega_1$ strictly above all the ordinals in $\bigcup_{n<\omega}F_n$. Note that, for any $n<\omega$, $\ran f_{n,\alpha}\subseteq B_{\alpha+1}$ and $\forall^\infty_{i\in X}(f_{n,\alpha}(i)\notin C)$. On the other hand, as $B_{\alpha+1}\subseteq_{\mathcal{ED}_\textrm{fin}}C$, there exists a $k<\omega$ such that $\forall_{i<\omega}(|a_i\cap B_{\alpha+1}\menos C|\leq k)$. We can find an $i\in X$ such that $i\geq N_n$ and $f_{n,\alpha}(i)\notin C$ for all $n\leq k$. Then, $\{f_{n,\alpha}(i)\ /\ n\leq k\}\subseteq a_i\cap B_{\alpha+1}\menos C$ so, as $f_{n,\alpha}(i)\in A_n$, it is clear that $|a_i\cap B_{\alpha+1}\menos C|>k$, a contradiction.
\end{proof}

\begin{corollary}\label{b(LinGr)=aleph1}
   If $\Iwf$ is a non-trivial ideal as defined in Example \ref{ExpFragId}(4), then $\bfrak(\Iwf)=\aleph_1$.
\end{corollary}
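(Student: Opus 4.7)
The plan is to reduce the corollary to Theorem \ref{b(EDfin)=aleph1} by an ``atomization'' trick. Let $\Iwf=\Iwf\langle a_i,\varphi_i\rangle_{i<\omega}$ with $\varphi_i(x)=|x|/g(i)$ and $\{|a_i|/g(i)\}_{i<\omega}$ unbounded. I would first pick $L\subseteq\omega$ infinite along which $|a_i|/g(i)\to\infty$, set $q(i):=\lfloor|a_i|/g(i)\rfloor$ for $i\in L$, and partition each $a_i$ ($i\in L$) into \emph{atoms} $b_{i,0},\ldots,b_{i,q(i)-1}$ of size exactly $g(i)$, discarding a residual of size less than $g(i)$. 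Let $X:=\bigcup_{i\in L}\bigcup_{j<q(i)}b_{i,j}$; by Remark \ref{RemGaps}(2) it suffices to produce an $(\omega,\omega_1)$-gap in $\Iwf\frestr X$. After relabeling $L$ as $\omega$, I may assume $|a_i|/g(i)\to\infty$ throughout, with partition $\{a_i\cap X\}_{i<\omega}$.

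Next, I would introduce the \emph{atom-level ideal} $\Iwf'$: the fragmented ideal on $\Omega:=\{(i,j):i<\omega,\,j<q(i)\}$ with partition pieces $\Omega_i:=\{(i,j):j<q(i)\}$ and counting submeasures $\psi_i(\tilde x):=|\tilde x|$. Then $\Iwf'$ is a non-trivial fragmented ideal of the same shape as $\mathcal{ED}_\mathrm{fin}$, the only difference being that the partition sizes $q(i)\to\infty$ may grow more slowly than $i+1$. The proof of Theorem \ref{b(EDfin)=aleph1} uses only the divergence of partition sizes (through the choice of a sequence $\{e_n\}_{n<\omega}$ with $e_n\leq q$, $e_n\to\infty$, $e_{n+1}-e_n\to\infty$), so it applies verbatim to $\Iwf'$ and produces an $\Iwf'$-$(\omega,\omega_1)$-gap $\langle\tilde\Awf,\tilde\Bwf\rangle$. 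I would then lift this gap to $X$ by setting $A_n:=\bigcup\{b_{i,j}:(i,j)\in\tilde A_n\}$ and $B_\alpha:=\bigcup\{b_{i,j}:(i,j)\in\tilde B_\alpha\}$; disjointness and uniform size of atoms yield $|A_n\cap B_\alpha\cap a_i|=g(i)\cdot|\tilde A_n\cap\tilde B_\alpha\cap\Omega_i|$, so $\Iwf$-orthogonality of $\langle\Awf,\Bwf\rangle$ follows from $\Iwf'$-orthogonality.

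The main task---and the principal obstacle I expect---is to show $\langle\Awf,\Bwf\rangle$ has no $\Iwf$-separator, which I would handle by converting an alleged $C\subseteq X$ into an $\Iwf'$-separator for $\langle\tilde\Awf,\tilde\Bwf\rangle$ via \emph{majority vote}: set $\tilde C:=\{(i,j)\in\Omega:2|b_{i,j}\cap C|\geq g(i)\}$. From $A_n\cap C\in\Iwf$, fix $l_n$ with $|A_n\cap C\cap a_i|\leq l_n\,g(i)$; each $(i,j)\in\tilde A_n\cap\tilde C$ contributes at least $g(i)/2$ pairwise disjoint points to $A_n\cap C\cap a_i$, forcing $|\tilde A_n\cap\tilde C\cap\Omega_i|\leq 2l_n$ and hence $\tilde A_n\cap\tilde C\in\Iwf'$. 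Symmetrically, from $B_\alpha\setminus C\in\Iwf$, fix $k_\alpha$ with $|B_\alpha\cap a_i\setminus C|\leq k_\alpha\,g(i)$; each $(i,j)\in\tilde B_\alpha\setminus\tilde C$ contributes more than $g(i)/2$ pairwise disjoint points to $B_\alpha\cap a_i\setminus C$, forcing $|\tilde B_\alpha\setminus\tilde C\cap\Omega_i|\leq 2k_\alpha$ and $\tilde B_\alpha\subseteq_{\Iwf'}\tilde C$. Thus $\tilde C$ separates $\langle\tilde\Awf,\tilde\Bwf\rangle$ modulo $\Iwf'$, contradicting the $\Iwf'$-gap. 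The threshold $g(i)/2$ is tight: both the ``mostly-in-$C$'' and ``mostly-out-of-$C$'' halves of each atom contribute a constant fraction of its mass, which is exactly what makes both sides of the separator verification bounded uniformly in $i$. Together with Remark \ref{RemGaps}(3), this yields $\bfrak(\Iwf)=\aleph_1$.
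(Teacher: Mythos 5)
Your proof is correct and takes essentially the same approach as the paper's: both partition each $a_i$ into atoms of size $g(i)$, build an $\mathcal{ED}_\mathrm{fin}$-style gap on the atom set, and pull a putative separator back via the majority-vote threshold at $\tfrac12 g(i)$. The only cosmetic difference is that the paper first normalizes via Remark~\ref{RemGaps}(2) so that $|a_i|=(i+1)g(i)$, making the atom-level ideal literally $\mathcal{ED}_\mathrm{fin}$ and packaging the transfer as a gap-preserving Boolean embedding $F\colon\Pwf(\omega)/\mathcal{ED}_\mathrm{fin}\to\Pwf(\omega)/\Iwf$, whereas you keep the variable atom count $q(i)$ and observe, correctly, that the proof of Theorem~\ref{b(EDfin)=aleph1} only uses $q(i)\to\infty$.
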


\begin{proof}
   By Remark \ref{RemGaps}(2), we may assume that $|a_i|=(i+1)g(i)$ for any $i<\omega$. Let $\{a_{i,j}\}_{j<i+1}$ be a partition of $a_i$ into sets of size $g(i)$. Let $\{b_i\}_{i<\omega}$ be the interval partition of $\omega$ such that $|b_i|=i+1$ and let $b_i=\{k_{i,j}\ /\ j<i+1\}$ be an enumeration. Define the finite-one function $h:\omega\to\omega$ such that $h^{-1}[\{k_{i,j}\}]=a_{i,j}$. Note that, for $x\subseteq\omega$, $x\in\mathcal{ED}_\textrm{fin}$ iff $h^{-1}[x]\in\Iwf$, so $F:\Pwf(\omega)/\mathcal{ED}_\textrm{fin}\to\Pwf(\omega)/\Iwf$, $F([x])=[h^{-1}[x]]$, is an embedding (of Boolean algebras).

   It suffices to show that $F$ preserves gaps. Let $\langle\Awf,\Bwf\rangle$ be $\mathcal{ED}_\textrm{fin}$-orthogonal. If the $\Iwf$-orthogonal pair $\langle\{h^{-1}[A]\ /\ A\in\Awf\},\{h^{-1}[B]\ /\ B\in\Awf\}\rangle$ is separated by a subset $C$ of $\omega$, then $H(C):=\bigcup_{i<\omega}\{k_{i,j}\ /\ |C\cap a_{i,j}|\geq\frac{1}{2}g(i)\}$ separates $\langle\Awf,\Bwf\rangle$. Indeed, if $A\in\Awf$ then there exists an $l<\omega$ such that $|C\cap h^{-1}[A\cap b_i]|=|C\cap h^{-1}[A]\cap a_i|\leq l\cdot g(i)$ for all $i<\omega$. Therefore, $|H(C)\cap A\cap b_i|\leq 2\cdot l$ for every $i<\omega$, so $H(C)\cap A\in\mathcal{ED}_\textrm{fin}$. Likewise, as $\omega\menos H(C)=\bigcup_{i<\omega}\{k_{i,j}\ /\ |a_{i,j}\menos C|>\frac{1}{2}g(i)\}$, $B\menos H(C)\in\mathcal{ED}_\textrm{fin}$ for any $B\in\Bwf$.
\end{proof}

We will obtain a generalization of the previous two results. Before, we introduce the following characterization of fragmented not gradually fragmented ideals.

\begin{lemma}\label{notgradchar}
   Let $\Iwf=\Iwf\langle a_j,\varphi_j\rangle_{j<\omega}$ be a fragmented not gradually fragmented ideal. Then, there exist $k<\omega$, a sequence $\langle C_i\rangle_{i<\omega}$ of pairwise disjoint infinite subsets of $\omega$ and a sequence $\{l_i\}_{i<\omega}$ of natural numbers such that, for any $i<\omega$ and $j\in C_i$, there exists a pairwise disjoint family $B_j$ of subsets of $a_j$ such that $|B_j|=l_i$, $\forall_{b\in B_j}(0<\varphi_j(b)\leq k)$ and $i<\varphi_j(\bigcup B_j)\leq i+k$.
\end{lemma}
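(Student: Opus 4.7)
The plan is to unpack the negation of gradual fragmentation, normalize via Remark~\ref{RemFragId}(1), and then apply pigeonhole twice followed by a diagonal construction.

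First, I would negate Definition~\ref{DefFragId}(2) to obtain a uniform $k<\omega$ such that for every $m<\omega$ there exist $l^*_m<\omega$ and an infinite $W^*_m\subseteq\omega$ with the property that each $j\in W^*_m$ admits a pairwise disjoint family $B^*_j\subseteq\Pwf(a_j)$ satisfying $|B^*_j|=l^*_m$, $\varphi_j(b)\leq k$ for every $b\in B^*_j$, and $\varphi_j(\bigcup B^*_j)>m$. The disjointness and the exact-cardinality restriction are exactly the two normalizations in Remark~\ref{RemFragId}(1).

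Second, for each such $j$ I would pass to a $\subseteq$-minimal subfamily $B^j\subseteq B^*_j$ with $\varphi_j(\bigcup B^j)>m$ still holding. Minimality forces $\varphi_j(b)>0$ for every $b\in B^j$ (otherwise $b$ could be removed), and finite subadditivity of $\varphi_j$ gives, for any chosen $b\in B^j$, the bound $\varphi_j(\bigcup B^j)\leq\varphi_j(\bigcup(B^j\menos\{b\}))+\varphi_j(b)\leq m+k$. Since $|B^j|$ ranges over the finite set $\{1,\dots,l^*_m\}$, pigeonhole supplies a single value $l_m$ and an infinite $W_m\subseteq W^*_m$ on which $|B^j|=l_m$ uniformly.

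Finally, I would extract pairwise disjoint infinite sets $C_i\subseteq W_i$ by a standard diagonal recursion: enumerate $\omega\times\omega$ and at stage $(i,n)$ pick $j_{i,n}\in W_i$ distinct from all previously chosen values (possible because each $W_i$ is infinite), then set $C_i:=\{j_{i,n}:n<\omega\}$. For $j\in C_i$, the family $B_j:=B^j$ built in Step~2 with $m=i$ satisfies $|B_j|=l_i$, $0<\varphi_j(b)\leq k$ for every $b\in B_j$, and $i<\varphi_j(\bigcup B_j)\leq i+k$, which is the desired conclusion. The only real subtlety is the two-step uniformization of cardinalities (first to $l^*_m$ from the negated definition, then to $l_m$ via minimality plus pigeonhole); the rest is routine bookkeeping and I anticipate no serious obstacle.
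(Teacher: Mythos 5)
Your proof is correct and follows essentially the same strategy as the paper: negate the definition, normalize to disjoint families of uniform size, trim each family so the submeasure lands in $(m,m+k]$, pigeonhole, and diagonalize to get disjoint $C_i$. The only (cosmetic) difference is that you obtain both $\varphi_j(b)>0$ and the upper bound $\varphi_j(\bigcup B^j)\leq m+k$ in one stroke from minimality of $B^j$, whereas the paper gets positivity by taking complements and the upper bound via the remove-one-at-a-time argument of Lemma~\ref{tallfrag}; these are the same idea, organized differently.
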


\begin{proof}
   As $\Iwf$ is not gradually fragmented, there exists a $k<\omega$ such that, for any $i<\omega$, there is an $l'_i<\omega$ and $W'_i\subseteq\omega$ infinite such that, for any $j\in W'_i$, there exists a $B'_j\subseteq\Pwf(a_j)$ of size $\leq l'_i$ such that $\forall_{b\in B'_j}(\varphi_j(b)\leq k)$ and $\varphi_j(\bigcup B'_j)>i$. By taking complements between the members of $B'_j$, it is easy to find a pairwise disjoint family $B''_j\subseteq\Pwf(a_j)$ of size $\leq l'_i$ such that $\bigcup B''_j=\bigcup B'_j$ and $\forall_{b\in B'_j}(0<\varphi_j(b)\leq k)$. A similar argument as in the proof of (ii) implies (iii) of Lemma \ref{tallfrag} shows that there is a $B_j\subseteq B''_j$ such that $i<\varphi_j(\bigcup B_j)\leq i+k$.

   It is clear that, for each $i<\omega$, there exists an $l_i\leq l'_i$ and $W''_i\subseteq W'_i$ infinite such that $|B_j|=l_i$ for all $j\in W''_i$. Finally find $C_i \subseteq W''_i$ infinite and pairwise disjoint.
\end{proof}

The first part for our generalization focuses on the class of fragmented not gradually fragmented ideals that can be characterized by uniform submeasures. For a finite set $a$, we say that a submeasure $\varphi:\Pwf(a)\to[0,+\infty)$ is \emph{uniform} if it only depends on the size of the sets.

\begin{theorem}\label{b(unifsubmeas)=aleph1}
   Let $\Iwf=\Iwf\langle a_j,\varphi_j\rangle_{j<\omega}$ be a fragmented not gradually fragmented ideal such that all the $\varphi_j$ are uniform submeasures. Then, $\bfrak(\Iwf)=\aleph_1$.
\end{theorem}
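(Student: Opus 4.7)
The plan is to adapt the strategy of Theorem~\ref{b(EDfin)=aleph1}, but replace the pairwise eventually different family of functions with a more robust \emph{independent} family, which is the essence of the ``second method'' advertised in the introduction of this section. The uniformity of the $\varphi_j$'s will allow us to pass from submeasures to sizes, so that the non-gradual-fragmentation data of Lemma~\ref{notgradchar} becomes an $\mathcal{ED}_\mathrm{fin}$-like block structure, albeit with $j$-dependent part sizes.

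First I would apply Lemma~\ref{notgradchar} to obtain $k<\omega$, a pairwise disjoint sequence $\langle C_i\rangle_{i<\omega}$ of infinite subsets of $\omega$, natural numbers $l_i$, and for each $j\in C_i$ a disjoint family $B_j\subseteq\Pwf(a_j)$ of size $l_i$ with $\varphi_j(b)\leq k$ for all $b\in B_j$ and $\varphi_j(\bigcup B_j)\in(i,i+k]$. Because $\varphi_j$ is uniform, I can refine so that all parts of $B_j$ have a common size $s_j$; writing $\psi_j(n)=\varphi_j(x)$ for $|x|=n$, this yields $\psi_j(s_j)\leq k$ and $\psi_j(l_is_j)\in(i,i+k]$. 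Subadditivity of $\psi_j$ forces $i<\psi_j(l_is_j)\leq l_i\psi_j(s_j)\leq l_ik$, so $l_i>i/k$ and in particular $l_i\to\infty$. Pick one $j_i\in C_i$ per $i$, chosen with $j_0<j_1<\cdots$, and enumerate $E_i:=\bigcup B_{j_i}=\bigsqcup_{q<l_i}b^i_q$.

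Next I would build the partition $\Awf=\{A_n\}_{n<\omega}$ of $\omega$ by distributing the parts of each $E_i$ among the $A_n$'s in groups of growing width (mimicking the functions $e_n$ of Theorem~\ref{b(EDfin)=aleph1}, so that $|A_n\cap E_i|\to\infty$), and placing $\omega\setminus\bigcup_i E_i$ into $A_0$. For each $n<\omega$ I would construct an $\omega_1$-family $\{f_{n,\alpha}\}_{\alpha<\omega_1}$ of functions with $f_{n,\alpha}(i)\in A_n\cap E_i$ that is \emph{independent}, meaning that for every finite $F\subseteq\omega_1$ and every compatible choice of values, the set of $i$'s realizing the prescription is infinite. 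Such families exist because $|A_n\cap E_i|\to\infty$, by a Fichtenholz--Kantorovich-style construction. Then I would build the $\subseteq_\Iwf$-increasing family $\Bwf=\{B_\alpha\}_{\alpha<\omega_1}$ so that $\ran f_{n,\beta}\subseteq_\Iwf B_\alpha$ for all $n<\omega$ and $\beta<\alpha$, handling the limit stages by the same subtraction of $\overline{A}_n$ tails as in Theorem~\ref{b(EDfin)=aleph1} to guarantee $A_n\cap B_\alpha\in\Iwf$ for all $n,\alpha$.

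For the gap step, suppose a separator $C$ exists. Each $C\cap A_n\in\Iwf$ gives, by uniformity, a constant $K_n$ with $\psi_{j_i}(|C\cap A_n\cap E_i|)\leq K_n$ for all $i$. Following the threshold strategy of Corollary~\ref{b(LinGr)=aleph1}, define $H_n:=\{(i,q):q<l_i,\ |C\cap b^i_q|\geq s_{j_i}/2\}$; subadditivity of $\psi_{j_i}$ (bounding $\psi_{j_i}(l_is_{j_i})$ in terms of $\psi_{j_i}(|H_n\cap\{i\}\times\omega|\cdot s_{j_i}/2)$) shows that the number of ``substantially covered'' parts of $E_i$ inside $A_n$ is $o(l_i)$. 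Applying independence to $\{f_{n+1,\alpha}\}$ produces an $\alpha<\omega_1$ whose values $f_{n+1,\alpha}(i)$ avoid these substantially covered parts for infinitely many $i$; the $\psi_{j_i}$-submeasure computation on the complement then produces a part $b^i_q\subseteq\ran f_{n+1,\alpha}\setminus C$ that contradicts $\ran f_{n+1,\alpha}\subseteq_\Iwf B_{\alpha+1}\subseteq_\Iwf C$.

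The main obstacle is precisely this gap step: since $\psi_j$ is uniform but genuinely $j$-dependent, the bound $|C\cap A_n\cap a_j|\leq\psi_j^{-1}(K_n)$ is not uniform in $j$, so the ``stop after $l$ many $\alpha$'s'' counting of Theorem~\ref{b(EDfin)=aleph1} fails. The remedy is to count at the \emph{part-index} level via the threshold $H_n$, where subadditivity does provide uniform control, and to use the full strength of independence (rather than mere eventual difference) to locate an $\alpha$ whose range sidesteps $H_n$ on a tail of $i$'s. Arranging the partition $\Awf$, the independent families $\{f_{n,\alpha}\}$, and the threshold together so that these three ingredients are mutually compatible is the technical heart of the argument.
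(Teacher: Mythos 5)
Your high-level plan (independent families plus a threshold reduction) points in the right general direction, but the construction as sketched has two structural problems that are not cosmetic.

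\textbf{The contradiction step does not produce a contradiction.} You conclude by exhibiting a single part $b^i_q\subseteq\ran f_{n+1,\alpha}\setminus C$ and claiming this contradicts $\ran f_{n+1,\alpha}\subseteq_\Iwf C$. But each part $b^i_q$ coming from Lemma~\ref{notgradchar} has submeasure at most $k$, so $\bar\varphi(\ran f_{n+1,\alpha})\leq k$ and a fortiori $\bar\varphi(\ran f_{n+1,\alpha}\setminus C)\leq k$, i.e., this set lies in $\Iwf$ no matter what. Showing that one part (or infinitely many parts, each of bounded submeasure) survives in the complement gives a lower bound of roughly $1$, not a diverging one. In $\mathcal{ED}_\mathrm{fin}$ the cardinality counting works because Theorem~\ref{b(EDfin)=aleph1} combines $k+1$ many \emph{distinct singletons} from $A_0,\dots,A_k$ in a single block $a_i$ to overshoot the bound; a single part per level cannot replicate this without further combination, and you never describe how to combine across $n$'s or across $\alpha$'s at one level. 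The paper's proof sidesteps this by building the $f_{k,\alpha}$'s out of \emph{chunks} of size $s_{i,j,k}(k)$ (submeasure $k$, growing as $k\to\infty$), and at a fixed level it unions $n_i(k)$ disjoint such chunks to cover all but a remainder of submeasure $<k$, so the union has submeasure $\geq i+1-k$. That diverging lower bound is what the counting in your proposal is missing.

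\textbf{You dropped the index the argument needs.} After selecting a single $j_i\in C_i$ per $i$, you have one block $a_{j_i}$ per level, and your independence must then be formulated over the single index $i$. The paper deliberately keeps a spare index $j$ (re-writing $\Iwf$ with triple-indexed blocks $a_{i,j,k}$, with infinitely many $j$ for each $(i,k)$) precisely because the size $n_i(k)$ of the finite family $H$ used in the gap step depends on $i$: you fix $i$, take $H$ of size $n_i(k)$, and only then appeal to independence to find a $j$ at which the chunks are pairwise disjoint. If instead you fix $H$ and vary $i$ (which is all you can do without the spare index), the size $n_i$ you need to cover $a_{j_i}$ changes with $i$, so the chunks at the level $i$ where disjointness occurs no longer nearly exhaust the block, and the submeasure lower bound evaporates. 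Your ``compatible prescription'' notion of independence also doesn't quite capture what is needed; the paper's notion is the specific statement that for each $i$ and each finite $F$, infinitely many $j$ make the values pairwise disjoint (or exhaustive), which is exactly what allows one to first fix $i$ (and hence $|H|$) and choose $j$ afterwards. Repairing your argument essentially forces you back to the paper's triple-indexed setup, at which point it is the same proof.

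Finally, a smaller but real omission: the estimate you quote — that the number of substantially covered parts is $o(l_i)$ — is an estimate against all $l_i$ parts of $E_i$, whereas you need it against $|A_n\cap E_i|$ for the threshold argument to localize inside $A_n$; you would need to arrange the partition $\Awf$ so that $|A_n\cap E_i|$ is a non-negligible fraction of $l_i$ eventually, and verify this against the bound $2l_i K_n/(i-K_n)$, which requires additional bookkeeping you have not done.
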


\begin{proof}
   Let $k$ be as in Lemma \ref{notgradchar}. By multiplying all the submeasures by $\frac{1}{k}$ (see Remark \ref{RemFragId}), we may assume that $k=1$. Therefore, we can write $\Iwf=\Iwf\langle a_{i,j,k},\varphi_{i,j,k}\rangle_{i,j,k<\omega}$, where the submeasures are uniform, such that there is a sequence $\{l_i\}_{i<\omega}$ of natural numbers such that, for any $i<\omega$, there is $W_i\subseteq\omega\times\omega$ infinite and, for any $(j,k)\in W_i$, there exists $B_{i,j,k}$ a pairwise disjoint family of subsets of $a_{i,j,k}$ such that $|B_{i,j,k}|=l_i$, $\forall_{b\in B_{i,j,k}}(0<\varphi_{i,j,k}(b)\leq 1)$ and $i<\varphi_{i,j,k}(\bigcup B_{i,j,k})\leq i+1$. By Remark \ref{RemGaps}(2), we may assume that $W_i=\omega\times\omega$ and $a_{i,j,k}=\bigcup B_{i,j,k}$ for all $i,j,k<\omega$. Also, without loss of generality, $\varphi_{i,j,k}=\lceil\varphi_{i,j,k}\rceil$, so $\forall_{b\in B_{i,j,k}}(\varphi_{i,j,k}(b)=1)$ and $\varphi_{i,j,k}(a_{i,j,k})=i+1$.

   Fix $i,j,k<\omega$. For each $m\leq i+1$ let $s_{i,j,k}(m)$ be the maximal $n\leq|a_{i,j,k}|$ such that all the subsets of $a_{i,j,k}$ of size $n$ have submeasure equal to $m$. By uniformity, it is clear that $s_{i,j,k}(m)$ exists and $s_{i,j,k}(m)<s_{i,j,k}(m+1)$ for $m\leq i$. Also, note that $s_{i,j,k}(0)=0$. By induction on $m\leq i$, it is easy to prove that $m\cdot s_{i,j,k}(1)\leq s_{i,j,k}(m)$. We may also assume that $|a_{i,j,k}|=s_{i,j,k}(i)+1$.

   For each $0<m<\omega$, $i\geq m$ and $j,k<\omega$, let $n_{i,j,k}(m)$ and $r_{i,j,k}(m) < s_{i,j,k}(m)$ be such  that $|a_{i,j,k}|=s_{i,j,k}(m)\cdot n_{i,j,k}(m)+r_{i,j,k}(m)$. Note that $m\cdot s_{i,j,k}(1)\cdot n_{i,j,k}(m)\leq s_{i,j,k}(m)\cdot n_{i,j,k}(m)\leq|a_{i,j,k}|\leq l_i\cdot s_{i,j,k}(1)$, where the last inequality holds because $|B_{i,j,k}|=l_i$. Therefore, $n_{i,j,k}(m)\leq l_i$. Thus, for a fixed $i<\omega$, there is an infinite $V_i\subseteq\omega\times\omega$ such that, for all $0<m\leq i$, there is an $n_i(m)\leq l_i$ such that $n_{i,j,k}(m)=n_i(m)$ for all $(j,k)\in V_i$. Again, by Remark \ref{RemGaps}(2), we may assume that $V_i=\omega\times\omega$.

   Fix $k<\omega$ and put
   \[P_{k}=\prod\big\{\{x\subseteq a_{i,j,k}\ /\ |x|=s_{i,j,k}(k)\}\ /\ j<\omega, i\geq k\}\big\}.\]
   Say that a family $\Fwf\subseteq P_{k}$ is \emph{independent} if, for any finite $F\subseteq\Fwf$ and for all $i\geq k$, there are infinitely many $j$'s such that $\{f(i,j)\ /\ f\in F\}$ is either pairwise disjoint or its union is $a_{i,j,k}$. It is easy to see that adding a Cohen real adds a real $c\in P_{k}$ such that, whenever $\Fwf$ is an independent family in the ground model, $\Fwf\cup\{c\}$ is independent in the extension. Therefore, there exists an independent family $\Fwf_{k}\subseteq P_{k}$ of size $\aleph_1$. Say $\Fwf_k =\{f_{k,\alpha}\ /\ \alpha<\omega_1\}$.

   For $k<\omega$ let $A_k:=\bigcup_{i,j<\omega}a_{i,j,k}$ and for $\alpha<\omega_1$ let $B_\alpha :=\bigcup\left\{ f_{k,\alpha}(i,j)\ :\ j,k<\omega,\right.$  $\left.i\geq k\right\}$. As $A_k\cap B_\alpha=\bigcup_{j<\omega, i\geq k}f_{k,\alpha}(i,j)$ and $\varphi_{i,j,k} ( f_{k,\alpha} ( i, j ) ) = k$, we get that $A_k\cap B_\alpha\in\Iwf$, that is, $\langle\{A_k\}_{k<\omega},\{B_\alpha\}_{\alpha<\omega_1}\rangle$ is $\Iwf$-orthogonal. We want to show that it is an $\Iwf$-gap.

   Assume that $B$ separates $\langle\{A_k\}_{k<\omega},\{B_\alpha\}_{\alpha<\omega_1}\rangle$. Find $\Gamma\subseteq\omega_1$ uncountable and $m<\omega$ such that, for all $\alpha\in\Gamma$, $\bar{\varphi}(B_\alpha\menos B)\leq m$ (here, $\bar{\varphi}(X)=\sup_{i,j,k<\omega}\{\varphi_{i,j,k}(X\cap a_{i,j,k})\}$). Let $k>2m$ and find $i\geq k$ such that $i+1-k>2\cdot\bar{\varphi}(A_k\cap B)$. Choose $H\subseteq\Gamma$ of size $n_i(k)$. By independence, there are infinitely many $j$'s such that $\{f_{k,\alpha}(i,j)\ /\ \alpha\in H\}$ is a disjoint family because, in the case that its union is $a_{i,j,k}$, we have $r_{i,j,k}(k)=0$ and the family will be disjoint anyway. Work with one of these $j$'s. For any $\alpha\in H$, as $\varphi_{i,j,k}(f_{k,\alpha}(i,j)\menos B)\leq m$ and $\varphi_{i,j,k}(f_{k,\alpha}(i,j))=k$, we obtain $|f_{k,\alpha}(i,j)\menos B|<\frac{1}{2}|f_{k,\alpha}(i,j)|=\frac{1}{2}s_{i,j,k}(k)$. Thus, $|\bigcup_{\alpha\in H}f_{k,\alpha}(i,j)\menos B|<\frac{1}{2}n_i(k)s_{i,j,k}(k)=\frac{1}{2}|\bigcup_{\alpha\in H}f_{k,\alpha}(i,j)|$, which implies that
   \[\varphi_{i,j,k}(\bigcup_{\alpha\in H}f_{k,\alpha}(i,j)\cap B)\geq\frac{1}{2}\varphi_{i,j,k}(\bigcup_{\alpha\in H}f_{k,\alpha}(i,j)).\]
   But, because $r_{i,j,k}(k) < s_{i,j,k}(k)$, we have $\varphi_{i,j,k}(\bigcup_{\alpha\in H}f_{k,\alpha}(i,j))\geq i+1-k>2\cdot\varphi_{i,j,k}(a_{i,j,k}\cap B)$, a contradiction.
\end{proof}

\begin{corollary}
   Let $\Iwf$ be a fragmented not gradually fragmented ideal as in Example \ref{ExpFragId}(5). Then, $\bfrak(\Iwf)=\aleph_1$.
\end{corollary}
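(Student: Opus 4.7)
The plan is to deduce the corollary directly from Theorem~\ref{b(unifsubmeas)=aleph1}. First I would observe that in Example~\ref{ExpFragId}(5) the submeasure on the $i$-th block is $\varphi_i(x) = |x|^{1/g(i)}$, whose value depends only on $|x|$ and not on which subset of $a_i$ the argument is. This is exactly the definition of \emph{uniform} submeasure given in the paragraph preceding Theorem~\ref{b(unifsubmeas)=aleph1}. Hence any ideal as in Example~\ref{ExpFragId}(5) is automatically presented by uniform submeasures.

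With that observation in hand, the corollary is immediate: by hypothesis $\Iwf$ is fragmented but not gradually fragmented, so the three assumptions of Theorem~\ref{b(unifsubmeas)=aleph1} (fragmented; not gradually fragmented; uniform $\varphi_i$) are all met. Applying that theorem yields $\bfrak(\Iwf)=\aleph_1$.

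There is essentially no obstacle here; the point of the corollary is only to record that the class of ideals of Example~\ref{ExpFragId}(5) is a legitimate source of instances for Theorem~\ref{b(unifsubmeas)=aleph1}. As a sanity check, the arithmetic characterization at the end of Example~\ref{ExpFragId}(5) states that $\Iwf$ fails to be gradually fragmented precisely when $\forall_{m<\omega}\exists_{l<\omega}\exists^\infty_{i<\omega}\bigl(m^{g(i)}<\min\{l,|a_i|\}\bigr)$; this is the concrete condition on the parameters $g$ and $\{|a_i|\}_{i<\omega}$ under which the corollary asserts $\bfrak(\Iwf)=\aleph_1$, and in particular it rules out trivial choices of the data.
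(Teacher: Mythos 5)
Your proof is correct and takes exactly the route the paper intends: observing that $\varphi_i(x)=|x|^{1/g(i)}$ depends only on $|x|$, hence is uniform, and then invoking Theorem~\ref{b(unifsubmeas)=aleph1}. The paper records this corollary without proof precisely because the deduction is immediate.
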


The second part corresponds to fragmented not gradually fragmented ideals that can be characterized by measures.

\begin{theorem}\label{b(meas)=aleph1}
   Let $\Iwf=\Iwf\langle a_j,\varphi_j\rangle_{j<\omega}$ be a somewhere tall fragmented ideal such that all the $\varphi_j$ are measures. Then, $\Iwf$ is not gradually fragmented and $\bfrak(\Iwf)=\aleph_1$.
\end{theorem}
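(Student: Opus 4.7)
My plan is to imitate Theorem \ref{b(unifsubmeas)=aleph1}, using additivity of measures in place of uniformity of submeasures, after first manufacturing uniform-looking blocks out of the data of Lemma \ref{notgradchar} by greedy bin-packing. Fix an $\Iwf$-positive $X$ with $\Iwf \frestr X$ tall (from somewhere-tallness). By Remark \ref{RemGaps}(2) the bound $\bfrak(\Iwf \frestr X) = \aleph_1$ would give $\bfrak(\Iwf) = \aleph_1$, so for the gap I may assume $\Iwf$ itself is tall; by Lemma \ref{tallfrag}(ii) and Remark \ref{RemFragId}(4) I rescale so that $\varphi_j(\{n\}) \leq 1$ for every $n \in a_j$.

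\emph{Not gradually fragmented.} Fix $m<\omega$. By non-triviality and Lemma \ref{tallfrag}(iii) there are infinitely many $j$ carrying some $y \subseteq a_j$ with $\varphi_j(y) \in (m, m+1]$. Partition $y$ greedily into pieces of measure $\leq 1$: the closing condition (current piece plus next singleton exceeds $1$) combined with additivity of $\varphi_j$ bounds the number of pieces by $2(m+1) + 1$. So I get a disjoint $B \subseteq \Pwf(a_j)$ with $|B| \leq 2m+3$, $\varphi_j(b) \leq 1$ for each $b \in B$, and $\varphi_j(\bigcup B) > m$. Taking $k=1$ in the negation of the definition, $\Iwf$ is not gradually fragmented.

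\emph{Block structure.} Lemma \ref{notgradchar} (rescaled so its $k$ is $1$) gives pairwise disjoint infinite $C_i \subseteq \omega$ and $l_i \geq i$ such that each $j \in C_i$ carries a disjoint $B_j \subseteq \Pwf(a_j)$ of size $l_i$, members of measure $\leq 1$, union of measure in $(i, i+1]$. Greedy-repack $B_j$ into bins of capacity $2$: since each item has measure $\leq 1$, every closed bin has measure in $(1,2]$, giving $l'_i := \lceil i/2 \rceil - 1$ disjoint blocks $b_{j,0}, \dots, b_{j, l'_i - 1} \subseteq \bigcup B_j$ of measure in $(1,2]$. Replace $a_j$ by the union of these blocks (a further restriction of $\Iwf$, still sufficient by Remark \ref{RemGaps}(2)) and bijectively identify $C_i$ with $\omega \times \omega$. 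This yields pieces $a_{i,s,t}$ for $(s,t) \in \omega \times \omega$ equipped with basic blocks $b_{i,s,t,u}$ ($u < l'_i$) of measure in $(1,2]$ --- the measure analogue of the rigid array used in Theorem \ref{b(unifsubmeas)=aleph1}.

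\emph{Cohen-generic gap.} Set $A_t := \bigcup_{i, s} a_{i, s, t}$: the $A_t$ partition the underlying set and each $A_t$ is $\Iwf$-positive (as $\varphi_{i,s,t}(a_{i,s,t}) \geq l'_i \to \infty$). For each $t$ let
\[P_t := \prod_{\substack{i \geq t,\\ s < \omega}} \Big\{\bigcup_{u \in T} b_{i,s,t,u}\ /\ T \subseteq l'_i,\ |T| = t\Big\},\]
and as in Theorem \ref{b(unifsubmeas)=aleph1} use Cohen forcing to build an independent $\{f_{t,\alpha}\ /\ \alpha < \omega_1\} \subseteq P_t$. Put $B_\alpha := \bigcup_t \bigcup_{i \geq t,\, s} f_{t,\alpha}(i,s)$. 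Disjointness of the $a_{i,s,t'}$ across distinct $t'$ collapses $A_t \cap B_\alpha$ to $\bigcup_{i \geq t, s} f_{t,\alpha}(i,s)$, whose summands have measure $\leq 2t$, so $\langle \{A_t\}, \{B_\alpha\}\rangle$ is $\Iwf$-orthogonal. Suppose $C$ separates it; set $M_t := \bar\varphi(C \cap A_t)$ and pass to uncountable $\Gamma$ with $\bar\varphi(B_\alpha \smallsetminus C) \leq m$ uniformly on $\Gamma$. Fix $t > 2m$, $H \subseteq \Gamma$ with $|H|(t-m) > M_t$, and $i \geq t$ with $l'_i > t|H|$; the latter blocks the ``$\bigcup_{\alpha \in H} f_{t,\alpha}(i,s) = a_{i,s,t}$'' alternative in independence (a cover would need $\geq l'_i$ blocks), so some $s$ makes $\{f_{t,\alpha}(i,s)\}_{\alpha \in H}$ pairwise disjoint. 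Additivity of $\varphi_{i,s,t}$ yields $\varphi_{i,s,t}(f_{t,\alpha}(i,s) \cap C) > t - m$ and
\[\varphi_{i,s,t}\Big(\bigcup_{\alpha \in H} f_{t,\alpha}(i,s) \cap C\Big) = \sum_{\alpha \in H} \varphi_{i,s,t}(f_{t,\alpha}(i,s) \cap C) > |H|(t-m) > M_t,\]
contradicting $\varphi_{i,s,t}(C \cap a_{i,s,t}) \leq M_t$. The main obstacle I anticipate is the block-structure step: greedy repacking is the device that converts Lemma \ref{notgradchar}'s loose data into uniform $(1,2]$-measure blocks so that additivity of measures can substitute for the uniformity of submeasures used in Theorem \ref{b(unifsubmeas)=aleph1}.
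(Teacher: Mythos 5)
Your argument is correct, and it reproduces the paper's theorem by a route that is recognizably similar in outline but genuinely different in its combinatorial core. The paper's proof, after establishing that $\Iwf$ is not gradually fragmented (via the ``first subset of measure in $(\tfrac12,1]$'' claim, where you instead use greedy bin-packing with overflow threshold $1$), reindexes to a triple array $\langle a_{i,j,k},\varphi_{i,j,k}\rangle$, writes $i+1=(k+1)\cdot n_{i,j,k}+r_{i,j,k}$, stabilizes $n_{i,j,k}=n_{i,k}$ along a pseudo-intersection, and takes $P_k$ to be the space of \emph{arbitrary} subsets $x\subseteq a_{i,j,k}$ with $\varphi_{i,j,k}(x)\in(k,k+1]$; the contradiction compares $\bigcup_{\alpha\in H}f_{k,\alpha}(i,j)\cap B$ of measure $>(n_{i,k}-1)(k-m)$ against $\bar\varphi(A_k\cap B)<n_{i,k}-1$. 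You instead re-pack each $B_j$ from Lemma~\ref{notgradchar} into $l'_i$ rigid ``unit'' blocks of measure in $(1,2]$, and let $P_t$ range over unions of exactly $t$ such blocks. This buys you a clean discrete structure: the disjoint-vs-cover dichotomy in independence becomes a counting statement on index sets $T\subseteq l'_i$, $|T|=t$, which is what you use to block the cover alternative when $t\,|H|<l'_i$, and additivity of $\varphi_{i,s,t}$ gives the final displayed inequality just as in the paper. In exchange you give up the bookkeeping with $n_{i,j,k}$; both gambits land the same blow.

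Two small cosmetic points, neither affecting correctness. First, ``$i\geq t$'' in the definition of $P_t$ should really be ``$i$ large enough that $l'_i\geq t$'' (roughly $i\geq 2t+2$ with your $l'_i=\lceil i/2\rceil-1$), otherwise the factors of the product can be empty for small $i$; this is an off-by-constant reindexing and does not matter. Second, you only need $t>m$ rather than $t>2m$ to make $|H|(t-m)$ arbitrarily large; the stronger hypothesis is harmless but unused. Also note that your bound $|B|\leq 2m+3$ in the first part is safely generous (pairing consecutive greedy pieces gives $\leq 2m+1$), and the conclusion that the negation of gradual fragmentation holds with $k=1$ is exactly right.
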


\begin{proof}
   Without loss of generality, by restricting the ideal to an $\Iwf$-positive set, we may assume that $\Iwf$ is tall and that, for all $j<\omega$ and $k\in a_j$, $0<\varphi_j(\{k\})\leq 1$. Let $i<\omega$ and $L_i:=\{j<\omega\ /\ \varphi_j(a_j)\geq i+1\}$, which is infinite. To see that $\Iwf$ is not gradually fragmented we show that, for each $j\in L_i$, there exists a pairwise disjoint family $B_j\subseteq\Pwf(a_j)$ of size $\leq2\cdot i$ such that
   $\forall_{b\in B_j}(\varphi_j(b)\leq 1)$ and $\varphi_j(\bigcup B_j)>i$.

   \begin{claim}
      Let $x\subseteq a_j$ such that $\varphi_j(x)\geq 1$. Then, there exists a $y\subseteq x$ such that $\frac{1}{2}<\varphi_j(y)\leq 1$.
   \end{claim}

   \begin{proof}
      Let $y$ be a subset of $x$ of maximal measure $\leq 1$. If $\varphi_j(y)\leq\frac{1}{2}$ there exists a $k\in x\menos y$ so, as $\varphi_j(y\cup\{k\})>1$, we get that $\frac{1}{2}<\varphi(\{k\})\leq 1$, which contradicts the maximality of $y$.
   \end{proof}

   Construct $B_j=\{b_{j,k}\ /\ k<l\}$ by recursion on $k$, where $\frac{1}{2}<\varphi_j(b_{j,k})\leq 1$ (the $l$ is defined at the end). Assume we have got $b_{k'}$ for $k'<k$. If $\varphi_j(\bigcup_{k'<k} b_{j,k'})>i$ put $l=k$ and stop the recursion. Otherwise, $\varphi_j(a_j\menos\bigcup_{k'<k} b_{j,k'})\geq1$, so we get $b_{j,k}\subseteq a_j\menos\bigcup_{k'<k} b_{j,k'}$ by application of the claim. If this recursion reaches $2\cdot i$ steps, put $l=2\cdot i$. Note that $\varphi_j(\bigcup B_j)=\sum_{k<2\cdot i}\varphi_j(b_{j,k})>i$.

   Like in the first part of the proof of Theorem \ref{b(unifsubmeas)=aleph1}, we may assume that $\Iwf=\Iwf\langle a_{i,j,k},$ $\varphi_{i,j,k}\rangle_{i,j,k<\omega}$ is given by measures and that there is a sequence $\{l_i\}_{i<\omega}$ of natural numbers such that, for any $i,j,k<\omega$, there exists $B_{i,j,k}$ a pairwise disjoint family of subsets of $a_{i,j,k}$ such that $|B_{i,j,k}|=l_i$, $\forall_{b\in B_{i,j,k}}(0<\varphi_{i,j,k}(b)\leq 1)$, $\bigcup B_{i,j,k}=a_{i,j,k}$ and $i<\varphi_{i,j,k}(a_{i,j,k})\leq i+1$.

   For $i,j,k<\omega$, there exist $n_{i.j,k}<\omega$ and $r_{i,j,k}\leq k$ such that $i+1=(k+1)\cdot n_{i,j,k}+r_{i,j,k}$. As $(k+1)\cdot n_{i,j,k}\leq i+1$, we may assume that there is an $n_{i,k}<\omega$ such that $n_{i,j,k}=n_{i,k}$ for all but finitely many $j<\omega$. To see this, construct a decreasing family $\{W_{i,k}\}_{i,k<\omega}$ (with respect to a well order of $\omega\times\omega$) of infinite subsets of $\omega$ such that, for each $i,k<\omega$, there is an $n_{i,k}<\omega$ such that $n_{i,j,k}=n_{i,k}$ for all $j\in W_{i,k}$. Let $W$ be a pseudo-intersection of $\{W_{i,k}\}_{i,k<\omega}$. By restricting the ideal, we may assume that $W=\omega$ (the set corresponding to the $j$ coordinates). Also note that, for fixed $k$, the sequence $\{n_{i,k}\}_{i<\omega}$ converges to infinity because $i+1<(k+1)\cdot(n_{i,k}+1)$.

   Start as in the proof of Theorem \ref{b(unifsubmeas)=aleph1} but change ``$|x|=s_{i,j,k}(k)$'' to ``$\varphi_{i,j,k}(x)\in(k,k+1]$'' in the definition of $P_{k}$. After choosing $\Gamma$ and $m$, proceed as follows. Choose $k>m$ and find $i\geq k$ such that $\bar{\varphi}(A_k\cap B)<n_{i,k}-1$. Now, for $H\subseteq\Gamma$ of size $n_{i,k}-1$, by independence there are infinitely many $j$'s such that $n_{i,j,k}=n_{i,k}$ and $\{f_{k,\alpha}(i,j)\ /\ \alpha\in H\}$ is a disjoint family. Work with one of these $j$'s. As $\varphi_{i,j,k}(\bigcup_{\alpha\in H}f_{k,\alpha}(i,j)\menos B)\leq m\cdot(n_{i,k}-1)$, $\varphi_{i,j,k}(\bigcup_{\alpha\in H}f_{k,\alpha}(i,j)\cap B)>(n_{i,k}-1)\cdot(k-m)\geq n_{i,k}-1$. Thus, $\varphi_{i,j,k}(B\cap a_{i,j,k})>n_{i,k}-1$, a contradiction.
\end{proof}

Note that if $\Iwf$ is a fragmented not gradually fragmented ideal and $\omega=X\cup Y$ is a disjoint union, then $\Iwf\frestr X$ or $\Iwf\frestr Y$ is not gradually fragmented. Because of this, we can mix Theorems \ref{b(unifsubmeas)=aleph1} and \ref{b(meas)=aleph1} to obtain

\begin{corollary}
   Let $\Iwf=\Iwf\langle a_j,\varphi_j\rangle_{j<\omega}$ be a fragmented not gradually fragmented ideal such that, for all but finitely many $j<\omega$, either $\varphi_j$ is a measure or a uniform submeasure. Then, $\bfrak(\Iwf)=\aleph_1$.
\end{corollary}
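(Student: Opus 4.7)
The plan is to apply the indicated mixing of Theorems~\ref{b(unifsubmeas)=aleph1} and~\ref{b(meas)=aleph1} on a suitable partition of $\omega$. By Remark~\ref{RemGaps}(2), discarding the finitely many exceptional $a_j$'s (replacing $\Iwf$ by its restriction to $\omega$ minus their union) cannot increase $\bfrak(\Iwf)$ and leaves the ideal fragmented and still not gradually fragmented; so I assume every $\varphi_j$ is either a measure or a uniform submeasure. Set $X:=\bigcup\{a_j\ /\ \varphi_j\text{ is uniform}\}$ and $Y:=\omega\menos X$, so $Y$ is a union of blocks $a_j$ on which $\varphi_j$ is a measure. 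By the observation stated just above the corollary, either $\Iwf\frestr X$ or $\Iwf\frestr Y$ fails to be gradually fragmented.

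If $\Iwf\frestr X$ is not gradually fragmented, it is exactly a fragmented not gradually fragmented ideal all of whose constituent submeasures are uniform, so Theorem~\ref{b(unifsubmeas)=aleph1} applies verbatim to give $\bfrak(\Iwf\frestr X)=\aleph_1$, and then $\bfrak(\Iwf)\leq\bfrak(\Iwf\frestr X)=\aleph_1$ by Remark~\ref{RemGaps}(2), which forces equality in view of Remark~\ref{RemGaps}(3).

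The main obstacle is the other case, because Theorem~\ref{b(meas)=aleph1} requires the extra hypothesis of being somewhere tall, and this is not automatic: the tall parts of $\Iwf$ could, a priori, live entirely in $X$. I would overcome this by using Lemma~\ref{notgradchar}: assuming $\Iwf\frestr Y$ is not gradually fragmented, the lemma produces a $k<\omega$ and, for infinitely many $j$ with $a_j\subseteq Y$, a pairwise disjoint family $B_j\subseteq\Pwf(a_j)$ with $0<\varphi_j(b)\leq k$ for each $b\in B_j$ and $\varphi_j(\bigcup B_j)$ going to infinity with $j$. Let $Z:=\bigcup_j\bigcup B_j\subseteq Y$. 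Since $\varphi_j(Z\cap a_j)\geq\varphi_j(\bigcup B_j)$ is unbounded in $j$, $Z$ is $\Iwf$-positive; and for any $m\in Z$, picking $b\in B_j$ with $m\in b$ gives $\varphi_j(\{m\})\leq\varphi_j(b)\leq k$, so every singleton in $Z$ has $\bar{\varphi}$-measure at most $k$. By Lemma~\ref{tallfrag}(ii) this means $\Iwf\frestr Z$ is tall, whence $\Iwf\frestr Y$ is somewhere tall. Since all relevant $\varphi_j$'s are measures, Theorem~\ref{b(meas)=aleph1} applied to $\Iwf\frestr Y$ gives $\bfrak(\Iwf\frestr Y)=\aleph_1$, and Remark~\ref{RemGaps}(2) and~(3) close the argument as in the first case.
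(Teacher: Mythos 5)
Your proof is correct and follows essentially the route the paper intends (the paper only gives the hint ``mix Theorems~\ref{b(unifsubmeas)=aleph1} and~\ref{b(meas)=aleph1}'' via the disjoint-union observation, and you fill this in faithfully). The one genuinely non-trivial step you add is verifying that $\Iwf\frestr Y$ is somewhere tall before invoking Theorem~\ref{b(meas)=aleph1}, and your argument via Lemma~\ref{notgradchar} and Lemma~\ref{tallfrag}(ii) is sound. Worth noting: this implication (fragmented not gradually fragmented $\Rightarrow$ somewhere tall) is already available in the paper as the contrapositive of the ``Moreover, $\Iwf$ is gradually fragmented'' clause of Lemma~\ref{fragnotpseudo-tallchar}, applied to $\Iwf\frestr Y$, so you could have shortened the measure case by citing that instead; but your self-contained derivation, which additionally exhibits an explicit positive set $Z$ on which the restriction is tall, is equally valid.
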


To finish the section, we explain a way of how to obtain a fragmented not gradually fragmented ideal from a fragmented ideal. Let $\Iwf=\Iwf\langle a_i,\varphi_i\rangle_{i<\omega}$ be a fragmented ideal. Now, let $\langle a_{i,j}\rangle_{i,j<\omega}$ be a partition of $\omega$ such that, for a fixed $i<\omega$ and all $j<\omega$, $|a_{i,j}|=|a_i|$ and $\varphi_{i,j}:\Pwf(a_{i,j})\to[0,+\infty)$ is the \emph{submeasure associated with $\langle a_i,\varphi_i\rangle$}, that is, if $h_{i,j}:a_{i,j}\to a_i$ is the (unique) strictly increasing bijection, then $\varphi_{i,j}(x)=\varphi_i(h_{i,j}[x])$ for any $x\subseteq a_{i,j}$. Let $\hat{\Iwf}$ be the fragmented ideal associated to $\langle a_{i,j},\varphi_{i,j}\rangle_{i,j<\omega}$. Roughly speaking, $\hat{\Iwf}$ is the ideal obtained by taking countably many copies of the ideal $\Iwf$.

\begin{lemma}\label{DestrGrad}
   With the notation of the previous paragraph,
   \begin{enumerate}[(a)]
      \item if $\Iwf$ is nowhere tall, then $\hat{\Iwf}$ is;
      \item if $\Iwf$ is somewhere tall, then $\hat{\Iwf}$ is not gradually fragmented.
   \end{enumerate}
\end{lemma}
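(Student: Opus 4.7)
The plan is to leverage Lemma~\ref{tallfrag}(ii), which characterizes tallness of a fragmented ideal via uniform boundedness of singleton submeasures, together with the order-preserving bijections $h_{i,j}:a_{i,j}\to a_i$ that underlie the construction of $\hat{\Iwf}$. Both parts follow by transferring sets between the ground set of $\Iwf$ and the ground set of $\hat{\Iwf}$ along these bijections.

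For part (a), given any $Y\subseteq\omega$ that is $\hat{\Iwf}$-positive, I would first form the projection $X:=\bigcup_{i,j<\omega}h_{i,j}[Y\cap a_{i,j}]$ and observe that, since $\varphi_i(X\cap a_i)\geq\varphi_{i,j}(Y\cap a_{i,j})$ for every $j$, $X$ is $\Iwf$-positive. The key observation here is that $\varphi_{i,j}(Y\cap a_{i,j})\leq\varphi_i(a_i)$ is bounded for each fixed $i$, so the $\hat{\Iwf}$-positivity of $Y$ is necessarily realized across infinitely many coordinates $i$, and this transfers to $\Iwf$-positivity of $X$. By the nowhere tallness of $\Iwf$, Lemma~\ref{tallfrag}(ii) yields points $q_n\in X$ with $\bar{\varphi}(\{q_n\})\to\infty$, and lifting each $q_n$ to a preimage $p_n\in Y$ along the appropriate $h_{i,j}$ gives $\hat{\varphi}(\{p_n\})=\bar{\varphi}(\{q_n\})\to\infty$; by the same lemma this shows that $\hat{\Iwf}\frestr Y$ is not tall.

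For part (b), the plan is to extract from the somewhere tallness of $\Iwf$ an $\Iwf$-positive $X\subseteq\omega$ together with a constant $k$ such that $\bar{\varphi}(\{q\})\leq k$ for all $q\in X$ (again via Lemma~\ref{tallfrag}(ii)), and then to verify that this $k$ refutes the condition in Definition~\ref{DefFragId}(2) for $\hat{\Iwf}$. Given any $m<\omega$, I would fix $i^*$ with $\varphi_{i^*}(X\cap a_{i^*})>m$ (possible because $X$ is $\Iwf$-positive) and set $l:=|X\cap a_{i^*}|$. Then for every $j<\omega$ the family $B_j:=\{\{p\}\ /\ p\in h_{i^*,j}^{-1}[X\cap a_{i^*}]\}\subseteq\Pwf(a_{i^*,j})$ is pairwise disjoint of size $l$, each of its members has submeasure $\varphi_{i^*,j}(\{p\})=\varphi_{i^*}(\{h_{i^*,j}(p)\})\leq k$ since $h_{i^*,j}(p)\in X$, and $\varphi_{i^*,j}(\bigcup B_j)=\varphi_{i^*}(X\cap a_{i^*})>m$. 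Running $j$ over $\omega$ produces infinitely many witnessing pieces for the same $l$, which is precisely the failure of gradual fragmentation (cf.\ Remark~\ref{RemFragId}(1)).

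The step requiring the most care is the verification of $\Iwf$-positivity of the projected set $X$ in part (a): it relies on the observation that $\varphi_{i,j}(a_{i,j})=\varphi_i(a_i)$ is finite for each $i$, preventing $\hat{\Iwf}$-positivity from being concentrated on finitely many columns. The remaining arguments amount to routine pullbacks along the isomorphisms $h_{i,j}$.
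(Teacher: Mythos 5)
Your proof is correct and follows essentially the same strategy as the paper's: characterize tallness via the singleton bound of Lemma~\ref{tallfrag}(ii) and transfer sets along the bijections $h_{i,j}$. The only cosmetic difference is in part~(a), where the paper selects a diagonal $X_1\subseteq Y$ and invokes an isomorphism $\langle X_1,\hat{\Iwf}\!\!\upharpoonright\!\! X_1\rangle\cong\langle X'_1,\Iwf\!\!\upharpoonright\!\! X'_1\rangle$, whereas you project all of $Y$ down to $X$ and pull back singleton witnesses; both routes are equally valid.
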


\begin{proof}
   \begin{enumerate}[(a)]
      \item Let $X\subseteq\omega$ be $\hat{\Iwf}$-positive, that is, $\{\varphi_{i,j}(X\cap a_{i,j})\}_{i,j<\omega}$ is an unbounded set of non-negative reals. Then, there exist $W\subseteq\omega$ infinite and a function $g:W\to\omega$ such that $\{\varphi_{i,g(i)}(X\cap a_{i,g(i)})\}_{i\in W}$ converges to infinity. Put $X_1=\bigcup_{i\in W}X\cap a_{i,g(i)}\subseteq X$ and $X'_1=\bigcup_{i\in W}(h_{i,g(i)}[X\cap a_{i,g(i)}])$. Clearly, $\langle X_1,\hat{\Iwf}\frestr X_1\rangle$ and $\langle X'_1,\Iwf\frestr X'_1\rangle$ are isomorphic and, as the second ideal is not tall, neither is the first ideal.
      \item Without loss of generality, we may assume that $\Iwf$ is tall and non-trivial. By Lemma \ref{tallfrag}, let $k$ be such that $\varphi_i(\{c\})\leq k$ for all $c\in a_i$ and $i<\omega$. Now let $m<\omega$ be arbitrary. Choose an $i<\omega$ such that $\varphi_i(a_i)>m$ and let $l:=|a_i|$. Note that, for any $j<\omega$, the family $B_{i,j}=\{\{c\}\ /\ c\in a_{i,j}\}$ has size $l$ and satisfies $\forall_{b\in B_{i,j}}(\varphi_{i,j}(b)\leq k)$ and $\varphi_{i,j}(\bigcup B_{i,j})>m$.
   \end{enumerate}
\end{proof}

\begin{corollary}\label{b(DestrGradUnif)=aleph1}
    Let $\Iwf=\Iwf\langle a_i,\varphi_i\rangle_{i<\omega}$ be a somewhere tall fragmented ideal such that, for any $i<\omega$, either $\varphi_i$ is uniform or is a measure. Then, $\bfrak(\hat{\Iwf})=\aleph_1$.
\end{corollary}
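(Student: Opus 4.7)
The plan is to observe that the corollary is a direct consequence of Lemma~\ref{DestrGrad}(b) combined with the corollary to Theorem~\ref{b(meas)=aleph1} (the ``mixed'' version for ideals whose submeasures are, on each piece, either a measure or a uniform submeasure). The argument consists of verifying that all hypotheses of that mixed corollary transfer from $\Iwf$ to $\hat{\Iwf}$.

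First, since $\Iwf$ is somewhere tall, Lemma~\ref{DestrGrad}(b) tells us that the ideal $\hat{\Iwf}=\Iwf\langle a_{i,j},\varphi_{i,j}\rangle_{i,j<\omega}$ is not gradually fragmented. (After relabeling the countable index set $\omega\times\omega$ as $\omega$, this is a fragmented not gradually fragmented ideal in the sense of the hypotheses of the mixed corollary.)

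Second, I would check that each submeasure $\varphi_{i,j}$ in the fragmentation of $\hat{\Iwf}$ inherits from $\varphi_i$ the property of being either uniform or a measure. Recall that $\varphi_{i,j}(x)=\varphi_i(h_{i,j}[x])$ where $h_{i,j}:a_{i,j}\to a_i$ is the unique strictly increasing bijection. If $\varphi_i$ is uniform, then for $x\subseteq a_{i,j}$ the value $\varphi_i(h_{i,j}[x])$ depends only on $|h_{i,j}[x]|=|x|$, so $\varphi_{i,j}$ is uniform. If instead $\varphi_i$ is a measure and $x,y\subseteq a_{i,j}$ are disjoint, then $h_{i,j}[x]$ and $h_{i,j}[y]$ are disjoint, and additivity of $\varphi_i$ yields $\varphi_{i,j}(x\cup y)=\varphi_{i,j}(x)+\varphi_{i,j}(y)$, so $\varphi_{i,j}$ is a measure. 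Hence by the assumption on $\Iwf$, every $\varphi_{i,j}$ is either uniform or a measure.

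With both hypotheses in place, the mixed corollary following Theorem~\ref{b(meas)=aleph1} applies directly to $\hat{\Iwf}$ and gives $\bfrak(\hat{\Iwf})=\aleph_1$, as required. There is no serious obstacle here: the entire content of the proof is the transfer of ``uniform'' and ``measure'' through the bijections $h_{i,j}$, which is essentially tautological, together with the single invocation of Lemma~\ref{DestrGrad}(b) to produce the failure of gradual fragmentation.
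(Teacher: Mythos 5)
Your proof is correct and is essentially the argument the paper intends (the paper states the corollary without a separate proof, relying on the combination of Lemma~\ref{DestrGrad}(b) and the preceding mixed corollary). The two steps you verify — that $\hat{\Iwf}$ is not gradually fragmented because $\Iwf$ is somewhere tall, and that each $\varphi_{i,j}$ inherits uniformity or additivity through the order-isomorphism $h_{i,j}$ — are exactly what is needed to invoke that corollary.
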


%
%

\section{Destroying gaps of gradually fragmented ideals}\label{SecDestroyGradFragGaps}

We present in this section a way to destroy Rothberger gaps for gradually fragmented ideals by a ccc poset. Moreover, for the case of an ideal like in Example \ref{ExpFragId}(2), we can even find a natural cardinal invariant that is less than or equal to its Rothberger number. As a consequence of our discussion we obtain two basic $\thzfc$-results: the Rothberger number of any gradually fragmented ideal is larger or equal to $\add(\Nwf)$ (Corollary~\ref{b(gradfrag)aboveAddN}) and the Rothberger number of any nowhere tall fragmented ideal is $\bfrak$ (Corollary~\ref{b(notpseudotall)=b}).

To fix some notation, for $b,h\in\omega^\omega$ let $\R_b:=\prod_{i<\omega}b(i)$ and $S(b,h) := \prod_{i<\omega}[b(i)]^{\leq h(i)}$. Also,
for $n<\omega$ put $S_n(b,h) := \prod_{i<n}[b(i)]^{\leq h(i)}$ and $S_{<\omega}(b,h)$ $:= \bigcup_{n<\omega}S_n(b,h)$. The forcing notions and cardinal invariants involved in the destruction of Rothberger gaps of gradually fragmented ideals are, respectively, parameterized versions of the localization forcings and of the cardinal invariant $\add(\Nwf)$.

\begin{definition}[Localization posets and cardinal invariants]\label{DefLocPoset}
   Let $b\in\omega^\omega$ such that $b>0$ and let $h\in\omega^\omega$ be a non-decreasing function.
   \begin{enumerate}[(1)]
      \item Define $\blocfrak(b,h)$ as the minimal size of a subset of $\R_b$ that cannot be $\in^*$-bounded by any slalom in $S(b,h)$ (if it exists).
      \item For $\Fwf\subseteq\R_b$, define the poset
            \[\Qor^h_{b,\Fwf}:=\{(s,F)\ /\ s\in S_{<\omega}(b,h)\textrm{,\ }F\subseteq\Fwf\textrm{\ and\ }|F|\leq h(|s|)\}\]
            ordered by $(s',F')\leq(s,F)$ iff $s\subseteq s'$, $F\subseteq F'$ and $\forall_{i\in[|s|,|s'|)}(\{x(i)\ /\ x\in F\}\subseteq s'(i))$. Put
            $\Qor^h_b:=\Qor^h_{b,\R_b}$.
   \end{enumerate}
\end{definition}

\begin{lemma}\label{LocPosetLemma}
   In the notation of Definition \ref{DefLocPoset}, if $h$ converges to infinity, then $\Qor^h_{b,{\Fwf}}$ is $\sigma$-linked and generically adds a slalom in $S(b,h)$ that $\in^*$-dominates all the reals in $\Fwf$. In particular, $\Qor^h_{b}$ generically adds a slalom in $S(b,h)$ that $\in^*$-dominates all the ground model reals in $\R_b$.
\end{lemma}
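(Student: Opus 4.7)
The plan is to split the lemma into two independent parts: first the $\sigma$-linkedness of $\Qor^h_{b,\Fwf}$, and then the density arguments that produce the $\in^*$-dominating slalom. I expect the main obstacle to be choosing the right countable index set of linked pieces; the density claims will be routine.

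The naive idea of partitioning by the stem $s$ fails: two conditions $(s,F_1),(s,F_2)$ sharing a stem have $|F_1\cup F_2|$ possibly as large as $2h(|s|)$, so any common extension $(t,F_1\cup F_2)$ forces $t(i)\supseteq\{x(i)\ /\ x\in F_1\cup F_2\}$ for $i\in[|s|,|t|)$, and the constraint $|t(i)|\leq h(i)$ may fail for $i$ close to $|s|$. I would remedy this by indexing the linked pieces by pairs $(s,t)\in S_{<\omega}(b,h)^2$ satisfying $s\subseteq t$ and $h(|t|)\geq 2h(|s|)$, defining
\[P_{s,t}:=\left\{(s,F)\in\Qor^h_{b,\Fwf}\ /\ \forall i\in[|s|,|t|)\,\big(\{x(i)\ /\ x\in F\}\subseteq t(i)\big)\right\}.\]
Countability of the index set follows from the finiteness of each $[b(i)]^{\leq h(i)}$, and hence of $S_{<\omega}(b,h)$. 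Each $P_{s,t}$ is $2$-linked: for $(s,F_1),(s,F_2)\in P_{s,t}$, the candidate $(t,F_1\cup F_2)$ is a condition because $|F_1\cup F_2|\leq 2h(|s|)\leq h(|t|)$, and by construction it extends both $(s,F_1)$ and $(s,F_2)$. Every $(s,F)$ lies in some $P_{s,t}$: using $h\to\infty$, choose $n\geq|s|$ minimal with $h(n)\geq 2h(|s|)$ and extend $s$ to $t$ of length $n$ by $t(i):=\{x(i)\ /\ x\in F\}$ for $i\in[|s|,n)$; the size bound $|t(i)|\leq|F|\leq h(|s|)\leq h(i)$ keeps $t$ in $S_n(b,h)$.

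For the generic slalom, I would let $\dot c$ name $\bigcup\{s\ /\ \exists F\,((s,F)\in\dot G)\}$. A stretching argument similar to the previous paragraph shows that $\{(s,F)\ /\ |s|\geq n\}$ is dense for each $n$, so $\dot c$ is forced to be a total function in $S(b,h)$. For $\in^*$-domination, fix $x\in\Fwf$ and show the set $E_x:=\{(s,F)\ /\ x\in F\}$ is dense: given $(s_0,F_0)$, first stretch to some $(t,F_0)$ with $h(|t|)\geq|F_0|+1$ using the same trick, then pass to $(t,F_0\cup\{x\})$. Once some $(s,F)\in\dot G$ has $x\in F$, the ordering forces $x(i)\in s'(i)$ for any subsequent $(s',F')\in\dot G$ and any $i\in[|s|,|s'|)$, hence $x(i)\in\dot c(i)$ for all $i\geq|s|$, i.e.\ $x\in^*\dot c$. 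The ``in particular'' clause is just the case $\Fwf=\R_b$.
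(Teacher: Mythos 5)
Your proof is correct, and your route to $\sigma$-linkedness is genuinely different from the paper's. The paper partitions by the stem $s$ alone but \emph{restricts} each piece to $Q_s:=\{(t,F)\ /\ t=s,\ |F|\leq h(|s|)/2\}$; two such conditions merge directly to $(s,F_1\cup F_2)$ with \emph{no} lengthening of the stem, since $|F_1\cup F_2|\leq h(|s|)$. The union $\bigcup_s Q_s$ is then only a \emph{dense} subset of $\Qor^h_{b,\Fwf}$, and the paper implicitly invokes the standard reduction that a poset with a $\sigma$-linked dense subset is itself $\sigma$-linked (replace each linked $D_n$ by its upward closure). Your decomposition by pairs $(s,t)$, with $P_{s,t}$ pre-committing the trace of $F$ on $[|s|,|t|)$ to lie inside $t$, is more elaborate but covers the whole poset outright, so you verify the literal definition of $\sigma$-linked with no detour through a dense subset. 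Note, therefore, that the ``naive idea'' you refute at the outset (all conditions sharing a stem in one piece) is indeed wrong, but it is not what the paper does --- the paper sidesteps it by the $h(|s|)/2$ restriction rather than by re-indexing. Your density and domination arguments in the second part coincide with the paper's, which simply names the generic slalom $\bigcup\dom\dot{G}$ and leaves the density checks to the reader.
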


\begin{proof}
   $\sigma$-linked is witnessed by $Q_s:=\{(t,F)\in\Qor^h_{b,\Fwf}\ /\ t=s\textrm{\ and\ }|F|\leq h(|s|)/2\}$. Convergence of $h$ to infinity implies the density of $\bigcup \{ Q_s \ /\ s\in S_{<\omega}(b,h) \}$. If $\dot{G}$ is the $\Qor^h_{b,\Fwf}$-name of the generic subset, then
   $\bigcup\dom\dot{G}$ is the name of the slalom that $\in^*$-dominates all the reals in $\Fwf$.
\end{proof}

The Bartoszy\'nski characterization of $\add(\Nwf)$ (Theorem \ref{BartcharAdd(N)}) implies that $\add(\Nwf)\leq\blocfrak(b,h)$ when $h$ converges to infinity.

\begin{theorem}\label{DestroyGradFragGaps}
   Let $\Iwf$ be a gradually fragmented ideal. Then, there exists a function $b\in\omega^\omega$ and a $\Dor$-name $\dot{h}$ of a non-decreasing function in $\omega^\omega$ that converges to infinity such that the forcing $\Dor\ast\Qor^{\dot{h}}_b$ destroys all the $\Iwf$-Rothberger gaps in the ground model.
\end{theorem}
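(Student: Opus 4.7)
The plan is to combine Hechler forcing (providing a dominating real $\dot d$ that calibrates, for each $B\in\Bwf$, how much submeasure $B$ concentrates on each partition piece $A_n$) with a localization forcing whose generic slalom assembles a separator for any ground-model $\Iwf$-Rothberger gap. Fix $\Iwf=\Iwf\langle a_i,\varphi_i\rangle_{i<\omega}$ gradually fragmented with integer-valued $\varphi_i$ (Remark~\ref{RemFragId}(3)) and let $f$ witness the gradual fragmentation. In $V$ define $N(k,l)$ to be the least $N$ such that for every $i\geq N$ and every family of at most $l$ subsets of $a_i$ each of submeasure $\leq k$, the union has submeasure $\leq f(k)$; set $b(i):=2^{|a_i|}$ and identify $\R_b$ with $\prod_i\Pwf(a_i)$.

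Let $\dot d$ be the Hechler generic. In $V[\dot d]$ take
\[\dot h(i):=\max\big(\{K\leq i\ /\ \forall k\leq K\,(N(\dot d(k),K)\leq i)\}\cup\{0\}\big).\]
Then $\dot h$ is non-decreasing (larger $i$ preserves every $K$ in the defining set) and converges to infinity: for any $K$, $\max_{k\leq K}N(\dot d(k),K)$ is a fixed natural number, so $\dot h(i)\geq K$ once $i$ exceeds it. Lemma~\ref{LocPosetLemma} then yields a generic slalom $\dot\sigma\in S(b,\dot h)$ that $\in^*$-dominates every real in $\R_b\cap V[\dot d]$.

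Given an $\Iwf$-Rothberger gap $\langle\Awf,\Bwf\rangle$ in $V$, by Lemma~\ref{Eqvb(I)} we may assume $\Awf=\{A_n\}_{n<\omega}$ is a partition of $\omega$ into $\Iwf$-positive pieces. For each $B\in\Bwf$ the function $g_B(n):=\bar\varphi(A_n\cap B)$ is finite and lies in $V$; in $V[\dot d]$ the set $N_B:=\{n\ /\ g_B(n)>\dot d(n)\}$ is finite, and $B^*:=B\setminus\bigcup_{n\in N_B}A_n$ is $\sim_\Iwf$-equivalent to $B$. Encoding $x_B(i):=B^*\cap a_i$ places $x_B$ in $\R_b\cap V[\dot d]$, whence $\forall^\infty i\,(x_B(i)\in\dot\sigma(i))$. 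Define the candidate separator by
\[\dot C\cap a_i:=\bigcup\{y\in\dot\sigma(i)\ /\ \forall n\leq\dot h(i)\,(\varphi_i(y\cap A_n)\leq\dot d(n))\}.\]

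To verify that $\dot C$ destroys the gap, observe first that $B^*\cap a_i$ automatically satisfies the ``good'' predicate at every $i$ (its intersection with $A_n$ vanishes for $n\in N_B$ and has submeasure $\leq g_B(n)\leq\dot d(n)$ otherwise), so $B^*\subseteq^*\dot C$ and hence $B\subseteq_\Iwf\dot C$. Second, for each $n$ and each $i$ with $\dot h(i)\geq n$, the set $\dot C\cap A_n\cap a_i$ is a union of at most $\dot h(i)$ subsets of $a_i$ each of $\varphi_i$-value $\leq\dot d(n)$; since $N(\dot d(n),\dot h(i))\leq i$ by construction, gradual fragmentation gives $\varphi_i(\dot C\cap A_n\cap a_i)\leq f(\dot d(n))$, and the finitely many remaining $i$ contribute summands bounded by $\varphi_i(a_i)$, so $\bar\varphi(\dot C\cap A_n)<\infty$ and $\dot C\cap A_n\in\Iwf$. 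The main technical subtlety — and the reason $\dot h$ must be a $\Dor$-name rather than a ground-model object — is the joint calibration built into its definition: the slalom width $\dot h(i)$ must be small enough that gradual fragmentation delivers the uniform bound $f(\dot d(n))$ at stage $i$, yet large enough that the localization forcing can $\in^*$-cover every encoded real $x_B$; the clause $\forall k\leq K\,(N(\dot d(k),K)\leq i)$ is precisely what enforces this balance.
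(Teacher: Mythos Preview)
Your proof is correct and follows essentially the same approach as the paper: dominate the submeasure profiles $g_B$ with the Hechler real, define $\dot h$ so that gradual fragmentation controls unions of $\dot h(i)$ pieces of submeasure $\leq \dot d(n)$ at stage $i$, and filter the generic slalom for ``good'' sets to assemble the separator. Your explicit formula for $\dot h$ via $N(k,l)$ and your use of $B^*$ (removing whole bad $A_n$'s) in place of the paper's piecewise truncation, together with your ``$n\leq\dot h(i)$'' filtering in lieu of the paper's WLOG that $A_n\cap a_i=\varnothing$ for $n>i$, are cosmetic variations on the same argument.
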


\begin{proof}
   In $V$ (the ground model), let $\Iwf=\Iwf\langle a_i,\varphi_i\rangle_{i<\omega}$ be a gradually fragmented ideal and $f\in\omega^\omega$ a function that witnesses its gradual fragmentation. Let $d\in\omega^\omega$ be a strictly increasing dominating real added generically over $V$ by $\Dor$. In $V[d]$, by the gradual fragmentation of $\Iwf$, find a non-decreasing sequence $\{N_l\}_{l<\omega}$ of natural numbers that converges to infinity and such that
   \begin{multline*}
     (+)\ \forall_{n\leq l}\forall_{i\geq N_l}\forall_{B\subseteq\Pwf(a_i)}\big[\big(|B|\leq l\textrm{\ and\ }\forall_{x\in B}(\varphi_i(x)\leq d(n))\big)\\
     \imp\varphi_i(\bigcup B)\leq f(d(n))\big].
   \end{multline*}
   $N_0=0$ can be assumed. Let $h:\omega\to\omega$ be defined as $h(i)=l$ when $i\in[N_l,N_{l+1})$.

   Back in $V$, let $\dot{h}$ be the $\Dor$-name of $h$ and put $b(i)=\Pwf(a_i)$. Now, let $\langle\Awf,\Bwf\rangle$ be $\Iwf$-orthogonal with $|\Awf|=\aleph_0$ and we show that $\Dor\ast\Qor^{\dot{h}}_b$ adds a subset of $\omega$ that separates $\langle\Awf,\Bwf\rangle$, moreover, we can even find a $\Dor$-name $\dot{\Fwf}$ of a subset of $\R_b$ of size $\leq|\Bwf|$ such that $\Dor\ast\Qor^{\dot{h}}_{b,\dot{\Fwf}}$ adds such a subset of $\omega$. Put $\Awf=\{A_n\ /\ n<\omega\}$. Without loss of generality, we may assume that $\Awf$ is a partition of $\omega$ and that, for each $i<\omega$,  $\forall_{n>i}(A_n\cap a_i=\varnothing)$, so $\{A_n\cap a_i\}_{n\leq i}$ becomes a partition of $a_i$. For each $B\in\Bwf$, let $g_B\in\omega^\omega$ be such that $g_B(n)=\lceil\bar{\varphi}(B\cap A_n)\rceil$. Step into $V[d]$ and, for each $B\in\Bwf$, define $x_B\in\R_b$ such that $x_B(i)=\bigcup_{n\leq i}x_B(i,n)$ where $x_B(i,n)=B\cap A_n\cap a_i$ if $\varphi_i(B\cap A_n\cap a_i)\leq d(n)$ and, otherwise, $x_B(i,n)=\varnothing$. Put $\Fwf:=\{x_B\ /\ B\in\Bwf\}$.

   Let $\psi$ be a slalom in $S(b,h)$ added generically over $V[d]$ by $\Qor^h_{b,\Fwf}$. Now, work in $V[d][\psi]$. Without loss of generality, we may assume that, for every $i<\omega$,
   $\forall_{x\in\psi(i)}\forall_{n\leq i}(\varphi_i(x\cap A_n)\leq d(n))$ (just take out those $x$ of $\psi(i)$ that do not satisfy that property). Put $C:=\bigcup_{i<\omega}\bigcup_{x\in\psi(i)}x$. This $C$ separates $\langle\Awf,\Bwf\rangle$.
   \begin{itemize}
     \item \emph{$C\cap A_n\in\Iwf$ for all $n<\omega$, moreover, $\varphi_i(C\cap A_n\cap a_i)\leq f(d(n))$ for all $i\geq N_n$.} Let $l<\omega$ be such that $i\in[N_l,N_{l+1})$, so $|\psi(i)|\leq h(i)=l$ and, by (+), as $n\leq l$, $C\cap A_n\cap a_i=\bigcup_{x\in\psi(i)}(x\cap A_n)$ and $\varphi_i(x\cap A_n)\leq d(n)$ for all $x\in\psi(i)$, we have that $\varphi_i(C\cap A_n\cap a_i)\leq f(d(n))$.
     \item \emph{$B\menos C\in\Iwf$ for all $B\in\Bwf$.} Note that $g_B\leq^* d$, so there exists an $m<\omega$ such that, for every $n\geq m$ and $i \geq n$, $\varphi_i(B\cap A_n\cap a_i)\leq d(n)$. Also, as $x_B\in^*\psi$, we may assume (by enlarging $m$) that $x_B(i)\in\psi(i)$ for all $i\geq m$. Then, $B\cap A_n\cap a_i\subseteq C\cap A_n\cap a_i$ for all $i \geq n\geq m$, so
         $B\cap(\bigcup_{n\geq m}A_n)\subseteq C\cap(\bigcup_{n\geq m}A_n)$. As $A_n\cap B\in\Iwf$ for any $n<\omega$, it follows that $B\subseteq_\Iwf C$.
   \end{itemize}
\end{proof}

The previous proof also indicates that the forcing $\Dor\ast\Loc^{\dot{h}}$ destroys the Rothberger gaps of the ground model for any gradually fragmented ideal $\Iwf$. But, as any localization forcing $\Loc^{h'}$ adds a dominating real, the following result comes as a consequence.

\begin{corollary}\label{b(gradfrag)aboveAddN}
   If $\Iwf$ is a gradually fragmented ideal, then $\add(\Nwf)\leq\bfrak(\Iwf)$.
\end{corollary}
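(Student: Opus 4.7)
My plan is to prove the contrapositive by running the construction from the proof of Theorem~\ref{DestroyGradFragGaps} entirely in the ground model, with Bartoszy\'nski's characterization (Theorem~\ref{BartcharAdd(N)}) taking over the role that the two-step forcing $\Dor\ast\Qor^{\dot h}_b$ played in the forcing argument. Concretely, I would assume toward a contradiction that $\bfrak(\Iwf)<\add(\Nwf)$ and fix an $\Iwf$-$(\omega,\kappa)$-gap $\langle\Awf,\Bwf\rangle$ with $\kappa=\bfrak(\Iwf)$. By Lemma~\ref{Eqvb(I)} I may arrange $\Awf=\{A_n\}_{n<\omega}$ to be a partition of $\omega$ with $A_n\cap a_i=\varnothing$ whenever $n>i$, where $\langle a_i,\varphi_i\rangle_{i<\omega}$ witnesses the gradual fragmentation of $\Iwf$.

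The next step is to replace the Hechler generic. Since $\add(\Nwf)\leq\bfrak$, I have $\kappa<\bfrak$, so in $V$ I can pick one $d\in\omega^\omega$ that $\leq^{*}$-dominates every $g_B$, where $g_B(n):=\lceil\bar\varphi(B\cap A_n)\rceil$ for $B\in\Bwf$. With this $d$ and a witness $f$ of the gradual fragmentation of $\Iwf$, I then build the sequence $\{N_l\}_{l<\omega}$ satisfying condition $(+)$ from the proof of Theorem~\ref{DestroyGradFragGaps}, set $h(i):=l$ on $[N_l,N_{l+1})$ and $b(i):=|\Pwf(a_i)|$, and for each $B\in\Bwf$ define $x_B\in\R_b$ exactly as in that proof. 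The resulting family $\Fwf:=\{x_B\ /\ B\in\Bwf\}$ has size at most $\kappa<\add(\Nwf)$, and $h$ is non-decreasing and converges to infinity, so Theorem~\ref{BartcharAdd(N)} produces---already in $V$---a slalom $\psi\in S(\omega,h)$ with $x_B\in^{*}\psi$ for every $B\in\Bwf$. Replacing $\psi(i)$ by $\psi(i)\cap\Pwf(a_i)$ turns $\psi$ into an element of $S(b,h)$ without destroying the covering property.

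Once $\psi$ is in hand, I set $C:=\bigcup_{i<\omega}\bigcup_{x\in\psi(i)}x$ and repeat verbatim the two bullet-point verifications from the proof of Theorem~\ref{DestroyGradFragGaps}: property $(+)$ together with $|\psi(i)|\leq h(i)$ gives $C\cap A_n\in\Iwf$ for every $n$, and the fact that $g_B\leq^{*}d$ together with $x_B\in^{*}\psi$ gives $B\subseteq_\Iwf C$ for every $B\in\Bwf$. Thus $C$ separates $\langle\Awf,\Bwf\rangle$, contradicting that it is a gap. The only genuinely new point to watch is the passage from Theorem~\ref{BartcharAdd(N)}, which is stated for $\omega^\omega$-valued families, to our setting of $\R_b$-valued families and slaloms in $S(b,h)$; I expect this to be the main (and entirely minor) obstacle, handled by the natural inclusion $\R_b\hookrightarrow\omega^\omega$ and the slalom restriction step mentioned above. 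Everything else is a direct in-$V$ transcription of the proof of Theorem~\ref{DestroyGradFragGaps}.
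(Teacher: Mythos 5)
Your proof is correct and follows essentially the same strategy as the paper's: translate the separation problem to a slalom-covering problem via the reals $x_B$, construct $b$, $h$, $\{N_l\}$ in the ground model from a dominating real $d$, and invoke Bartoszy\'nski's characterization of $\add(\Nwf)$ to obtain the required slalom. The only technical difference is that the paper wraps the argument in two elementary submodels $M\preceq N\preceq H_\chi$ and covers all of $\R_b\cap N$ using $\add(\Nwf)\leq\blocfrak(b,h)$, whereas you apply Theorem~\ref{BartcharAdd(N)} directly to the specific family $\{x_B\ /\ B\in\Bwf\}$ (of size $<\add(\Nwf)$) after coding $\R_b$ into $\omega^\omega$; this is a slight streamlining, since the $x_B$'s are the only reals the verification actually needs to be captured by $\psi$, and the trimming of $\psi$ does not remove any $x_B(i)$ because those values satisfy the bound $\varphi_i(x_B(i)\cap A_n)\leq d(n)$ by construction. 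Both routes are valid, and yours is arguably the more transparent one.
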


\begin{proof}
   Let $\Iwf=\Iwf\langle a_i,\varphi_i\rangle_{i<\omega}$ and $\langle\Awf,\Bwf\rangle$ be $\Iwf$-orthogonal where $\Awf$ is a partition of $\omega$ and $|\Bwf|<\add(\Nwf)$. Let $\chi$ be a large enough regular cardinal and $M\preceq H_\chi$ such that $\Bwf\cup\{\Awf,\Bwf,\langle a_i,\varphi_i\rangle_{i<\omega}\}\subseteq M$ and $|M|<\add(\Nwf)$. As $\add(\Nwf)\leq\bfrak$, there exists a real $d\in\omega^\omega$ dominating $M\cap\omega^\omega$. Define $b$, $h$ and $\Fwf=\{g_B\ /\ B\in\Bwf\}$ as in the proof of Theorem \ref{DestroyGradFragGaps}. Let $N\preceq H_\chi$ be such that $M\cup\{d\}\subseteq N$ and $|N|<\add(\Nwf)$. As $\add(\Nwf)\leq\blocfrak(b,h)$, we can find $\psi\in S(b,h)$ that $\in^*$-dominates all the reals in $\R_b\cap N$. Like in the proof of Theorem \ref{DestroyGradFragGaps}, we can construct $C$ that separates $\langle\Awf,\Bwf\rangle$.
\end{proof}

If in the proof of Theorem \ref{DestroyGradFragGaps} we consider a partition $\langle a_i\rangle_{i<\omega}$ such that $\langle |a_i|\rangle_{i<\omega}$ is bounded, then the resulting forcing $\Qor^h_b$ does not add anything new, which means that we can destroy $\Iwf$-gaps in this case by just adding dominating reals. Therefore, as a consequence of Lemma \ref{fragnotpseudo-tallchar}, it follows that

\begin{corollary}  \label{b(notpseudotall)=b}
   If $\Iwf$ is a nowhere tall fragmented ideal on $\omega$, then $\bfrak(\Iwf)=\bfrak$.
\end{corollary}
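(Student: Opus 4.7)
The plan is to invoke Lemma \ref{fragnotpseudo-tallchar}, which splits nowhere tall fragmented ideals into two cases: (i) $\Iwf=\{x\subseteq\omega\ /\ x\subseteq^*A\}$ for some $A\subseteq\omega$, or (ii) $\Iwf$ is generated by an infinite partition $\{I_m\}_{m<\omega}$ of $\omega$ into infinite sets. The upper bound $\bfrak(\Iwf)\leq\bfrak$ holds for any fragmented ideal by Remark \ref{RemGaps}(6), so only the lower bound needs work. In case (i), $\Iwf$ is an analytic P-ideal, so Remark \ref{RemGaps}(7) already yields $\bfrak(\Iwf)=\bfrak$; this reduces the problem to case (ii).

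In case (ii) one has $x\in\Iwf$ iff $x\subseteq\bigcup_{m\leq N}I_m$ for some $N<\omega$. Rather than invoke the forcing construction of Theorem \ref{DestroyGradFragGaps}, I plan to build the separator directly; this mirrors the observation that when $\langle|a_i|\rangle$ is bounded (here $|a_i|=1$) the localization forcing $\Qor^h_b$ collapses, leaving only a $\leq^*$-domination requirement. Concretely, fix an $\Iwf$-orthogonal pair $\langle\Awf,\Bwf\rangle$ with $|\Awf|=\aleph_0$ and $|\Bwf|<\bfrak$. By Lemma \ref{Eqvb(I)}(I)(i), I may assume $\Awf=\{A_n\ /\ n<\omega\}$ is a partition of $\omega$. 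For each $B\in\Bwf$ and $n<\omega$, $A_n\cap B\in\Iwf$, so
\[g_B(n):=\min\{N<\omega\ /\ A_n\cap B\subseteq\bigcup_{m\leq N}I_m\}\]
defines an element of $\omega^\omega$. Since $|\{g_B\ /\ B\in\Bwf\}|<\bfrak$, pick $d\in\omega^\omega$ with $g_B\leq^*d$ for every $B\in\Bwf$ and put
\[C:=\bigcup_{n<\omega}\Big(A_n\cap\bigcup_{m\leq d(n)}I_m\Big).\]
The verification that $C$ separates $\langle\Awf,\Bwf\rangle$ is then immediate: $A_n\cap C=A_n\cap\bigcup_{m\leq d(n)}I_m\in\Iwf$ because $\Awf$ is a partition; and for $B\in\Bwf$, if $N_B$ is such that $g_B(n)\leq d(n)$ for all $n\geq N_B$, then $B\cap A_n\subseteq C$ for $n\geq N_B$, so $B\menos C\subseteq\bigcup_{n<N_B}(A_n\cap B)\in\Iwf$.

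No serious obstacle arises: the proof is essentially bookkeeping once Lemma \ref{fragnotpseudo-tallchar} has identified the ideal. The conceptual point worth flagging is that the apparatus of Section \ref{SecDestroyGradFragGaps} is overkill when the fragments $a_i$ are singletons: the generic slalom can be replaced by the full power set $\Pwf(a_i)$ beyond any finite threshold, so the only genuine requirement is $\leq^*$-domination of the $g_B$'s, which is available whenever $|\Bwf|<\bfrak$.
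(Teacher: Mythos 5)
Your proof is correct. It reaches the same lower bound $\bfrak \leq \bfrak(\Iwf)$ as the paper, but by a self-contained $\thzfc$ construction rather than by appealing to the forcing apparatus of Section~\ref{SecDestroyGradFragGaps}. The paper's route is to observe that, since Lemma~\ref{fragnotpseudo-tallchar} lets one take $a_i=\{i\}$ for a nowhere tall fragmented ideal, the localization poset $\Qor^h_b$ in the proof of Theorem~\ref{DestroyGradFragGaps} adds nothing (the slalom can simply be taken to be the full $\Pwf(a_i)$ beyond a finite threshold), so the elementary-submodel argument of Corollary~\ref{b(gradfrag)aboveAddN} only requires a $\leq^*$-dominating real, yielding $\bfrak$ in place of $\add(\Nwf)$ as the lower bound. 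You instead make the $\leq^*$-domination requirement explicit: having identified $\Iwf$ as generated by a partition $\{I_m\}_{m<\omega}$, the functions $g_B$ are exactly the relevant data, and a single $\leq^*$-dominating $d$ produces the separator $C=\bigcup_n\big(A_n\cap\bigcup_{m\leq d(n)}I_m\big)$ directly. The two arguments rest on the same idea; yours trades reuse of the forcing machinery for an explicit, elementary construction of $C$, which is arguably cleaner and easier to verify. The case split you introduce (case (i) of Lemma~\ref{fragnotpseudo-tallchar} dispatched via Remark~\ref{RemGaps}(7), case (ii) via the $g_B$-construction) is not actually necessary, since the identical domination argument also works in case (i) (where the $I_m$ with $m\geq N$ are merely finite rather than infinite), but it does no harm.
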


In the particular case of the gradually fragmented ideals in \ref{ExpFragId}(2), we even get a nice lower bound for the Rothberger number for each of these ideals.

\begin{lemma}\label{b(I_c(P))aboveLoc}
   Let $b,h\in\omega^\omega$ be functions converging to infinity such that $b\geq2$ and $h$ is non-decreasing. If $c\in\omega^\omega$ is such that $2\leq c$ and $h\leq^*c$ and $P=\{a_i\}_{i<\omega}$ is a partition of $\omega$ into non-empty finite sets such that $|a_i|\leq\log_2b(i)$ for all but finitely many $i<\omega$, then $\min\{\blocfrak(b,h),\bfrak\}\leq\bfrak(\Iwf_c(P))$. This also means that the forcing $\Dor\ast\Qnm^h_b$ destroys the Rothberger gaps of $\Iwf_c(P)$.
\end{lemma}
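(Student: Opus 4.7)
The plan is to adapt the proof of Theorem~\ref{DestroyGradFragGaps} to a pure $\thzfc$ argument, replacing the Hechler-generic dominating real with a $\bfrak$-bound and the localization slalom with a $\blocfrak(b,h)$-bound. Fix an $\Iwf_c(P)$-orthogonal pair $\langle\Awf,\Bwf\rangle$ with $|\Awf|=\aleph_0$ and $|\Bwf|<\min\{\blocfrak(b,h),\bfrak\}$; the goal is to produce $C\subseteq\omega$ separating it. By Lemma~\ref{Eqvb(I)}, exactly as in the proof of Theorem~\ref{DestroyGradFragGaps}, one may assume that $\Awf=\{A_n\ /\ n<\omega\}$ is a partition of $\omega$ with $A_n\cap a_i=\varnothing$ for all $n>i$. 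For each $B\in\Bwf$ pick $g_B\in\omega^\omega$ with $|A_n\cap B\cap a_i|<c(i)^{g_B(n)}$ for all $n,i<\omega$, and since $|\Bwf|<\bfrak$ choose $d\in\omega^\omega$ with $g_B\leq^*d$ for every $B\in\Bwf$.

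Now use the hypothesis $|a_i|\leq\log_2 b(i)$ (eventually) to fix injections $\iota_i\colon\Pwf(a_i)\hookrightarrow b(i)$, and for each $B\in\Bwf$ set $x_B(i):=\iota_i(y_B(i))\in\R_b$, where $y_B(i):=\bigcup_{n\leq i}y_B(i,n)$ with $y_B(i,n):=A_n\cap B\cap a_i$ when $|A_n\cap B\cap a_i|<c(i)^{d(n)}$ and $y_B(i,n):=\varnothing$ otherwise. Since $|\{x_B\ /\ B\in\Bwf\}|<\blocfrak(b,h)$, pick $\psi\in S(b,h)$ with $x_B\in^*\psi$ for every $B\in\Bwf$. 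Discard from each $\psi(i)$ any $\mu$ for which $|\iota_i^{-1}(\mu)\cap A_n|\geq c(i)^{d(n)}$ for some $n\leq i$; this preserves $x_B\in^*\psi$ because $y_B(i)$ satisfies the bound by construction. Define
\[C:=\bigcup_{i<\omega}\bigcup_{\mu\in\psi(i)}\iota_i^{-1}(\mu).\]

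To check that $C$ separates the pair, first note that $h\leq^*c$ and $|\psi(i)|\leq h(i)$ yield, for all but finitely many $i$,
\[|C\cap A_n\cap a_i|\leq |\psi(i)|\cdot c(i)^{d(n)}\leq c(i)\cdot c(i)^{d(n)}=c(i)^{d(n)+1},\]
so $\varphi_i(C\cap A_n\cap a_i)\leq d(n)+1$ uniformly in $i$, and thus $C\cap A_n\in\Iwf_c(P)$. For $B\subseteq_{\Iwf_c(P)}C$, fix $m$ with $g_B(n)\leq d(n)$ for $n\geq m$ and $i_0$ with $x_B(i)\in\psi(i)$ for $i\geq i_0$; then for any such $n$ and any $i\geq\max\{n,i_0\}$ one has $A_n\cap B\cap a_i=y_B(i,n)\subseteq\iota_i^{-1}(x_B(i))\subseteq C$. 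Combining with $A_n\cap a_i=\varnothing$ for $i<n$ confines $B\menos C$ to $\bigcup_{n<m}(A_n\cap B)\cup\bigcup_{i<i_0}a_i$, an element of $\Iwf_c(P)$.

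For the forcing conclusion, replace the ground-model choice of $d$ by the $\Dor$-generic dominating real and the choice of $\psi$ by the slalom generically added by $\Qnm^h_b$ over $V[d]$, which by Lemma~\ref{LocPosetLemma} $\in^*$-dominates every real of $V[d]$ in $\R_b$, and in particular every $x_B$; the identical verification then destroys any $\Iwf_c(P)$-Rothberger gap coded in $V$. The only real subtlety is arranging the pruning of $\psi$ and confirming that the gradual-fragmentation inequality survives under only the asymptotic hypothesis $h\leq^*c$; this is the step that forces the compatibility of the coding bound $|a_i|\leq\log_2 b(i)$ with the parameters $b,h,c$.
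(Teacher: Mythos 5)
Your proof is correct and follows essentially the same route as the paper: you unpack the ZFC specialization of the argument from Theorem~\ref{DestroyGradFragGaps} (as in Corollary~\ref{b(gradfrag)aboveAddN}) to $\Iwf_c(P)$, replacing the Hechler real and the generic slalom by a $\leq^*$-dominating $d$ (available since $|\Bwf|<\bfrak$) and a covering $\psi\in S(b,h)$ (available since $|\Bwf|<\blocfrak(b,h)$). The only presentational difference is that the paper first passes to $b(i)=2^{|a_i|}$ via the monotonicity $b'\leq^*b\Rightarrow\blocfrak(b,h)\leq\blocfrak(b',h)$ and reconstructs a ground-model step function $h'=^*h$ from a sequence $\{N_l\}$, whereas you keep the general $b$ by fixing injections $\iota_i\colon\Pwf(a_i)\hookrightarrow b(i)$ and use $|\psi(i)|\leq h(i)\leq^*c(i)$ directly in the counting bound; these are the same argument with different bookkeeping.
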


\begin{proof}
   As $b'\leq^* b$ implies $\blocfrak(b,h)\leq\blocfrak(b',h)$, it is enough to assume that $b(i)=2^{|a_i|}$ for all $i<\omega$. Note that, in the proof of Theorem \ref{DestroyGradFragGaps}, any sequence $\{N_l\}_{l<\omega}$ such that $c(i)\geq l$ for all $i\geq N_l$ serves for the purposes of that proof, so it can be defined in the ground model. In particular, choose such a sequence with the property $\forall_{i\in[N_l,N_{l+1})}(h(i)=l\leq c(i))$ for all but finitely many $l<\omega$. Define $h'(i)=l$ when $i\in[N_l,N_{l+1})$. By the argument of the same proof, $\min\{\blocfrak(b,h'),\bfrak\}\leq\bfrak(\Iwf_c(P))$ and, as $h'=^*h$, it is clear that $\blocfrak(b,h')=\blocfrak(b,h)$.
\end{proof}

\begin{remark}
   It is consistent that $\bfrak<\blocfrak(b,h)$ for every $b,h\in\omega^\omega$ such that $b>0$ and $h$ is a non-decreasing function that converges to infinity. This is because the forcing $\Qor^h_b$ satisfies that, for every name $\dot{g}$ of a real in $\omega^\omega$, there exists a real $f\in\omega^\omega$ in the ground model such that $\Vdash f'\not\leq^*\dot{g}$ for every real such that $f'\not\leq^*f$. The proof of this property is the same as the proof in \cite{miller} for the standard forcing that adds an eventually different real.

   By a book-keeping argument, if $\kappa\leq\lambda$ are uncountable regular cardinals and $\lambda^{<\kappa}=\lambda$, it is possible to perform a \emph{finite support iteration} (denoted by \emph{fsi} for short) $\Por_\lambda=\langle\Por_\alpha,\Qnm_\alpha\rangle_{\alpha<\lambda}$ alternating between suborders of $\Dor$ of size $<\kappa$ and posets of the form $\Qor_b^h$ such that, for any $\alpha<\lambda$ and $\Por_\alpha$-names $\dot{b}$ and $\dot{h}$, there is a $\beta\in(\alpha,\lambda)$ such that $\Qnm_\beta$ is $\Qor_{\dot{b}}^{\dot{h}}$, likewise for any $\Por_\alpha$-name for a suborder of $\Dor$ of size $<\kappa$. Known results for preservation properties in fsi (see, e.g., \cite{brendle} and \cite{mejia}) imply that, in the $\Por_\lambda$-extension, $\bfrak=\kappa$ and $\blocfrak(b,h)=\lambda$ for any $b,h$ as above (in the case that the cardinal exists).
\end{remark}

%
%

\section{Preservation properties}\label{SecPresProp}

We present some properties that help us to preserve the Rothberger number of a tall fragmented ideal small under certain forcing extensions. Actually, we present a new cardinal invariant that serves as upper bound for some of these Rothberger numbers and study a property for preserving this invariant small under generic extensions. Many ideas involved for this are inspired by \cite{kamoosuga}. At the end of this section, we discuss a property for preserving $\add(\Nwf)$ small in forcing extensions.

For this section, fix $\Iwf=\Iwf\langle a_i,\varphi_i\rangle_{i<\omega}$ a tall fragmented ideal, $2^{\bar{a}}:=\{\Pwf(a_i)\}_{i<\omega}$,  $\bar{L}:=\{L_n\}_{n<\omega}$ a partition of $\omega$ into infinite sets, $A_n:=\bigcup_{i\in L_n}a_i$ and $\Awf:=\{A_n\ / n<\omega\}$, which is also a partition of $\omega$ into infinite sets. Let $\Owf(\Iwf,\bar{L})$ be the collection of all the subsets of $\omega$ that are $\Iwf$-orthogonal with $\Awf$. In our applications, we will have that $\lim_{i\to+\infty}\varphi_i(a_i)=+\infty$ (a useful assumption for applying Theorem \ref{b(I)leqparam} and for saying something about the Rothberger number of $\Iwf$), but this is not a general requirement for the results in this section.

\begin{definition}\label{Defb(I)param}
   Let $\rho\in\omega^\omega$, $\rho>0$.
   \begin{enumerate}[(1)]
     \item For $\psi\in\prod_{i<\omega}\Pwf(\Pwf(a_i))$ and $Y\in\Owf(\Iwf,\bar{L})$, define
           \[\psi\backepsilon' Y\ \textrm{iff }\forall_{n<\omega}\exists^\infty_{i\in L_n}(\psi(i)\backepsilon Y\cap a_i).\]
     \item $\bfrak^\rho(\Iwf,\bar{L})$ is the least size of a subset $\Psi$ of $S(2^{\bar{a}},\rho)$ such that,
           for any $Y\in\Owf(\Iwf,\bar{L})$, there exists a $\psi\in\Psi$ such that $\psi\backepsilon' Y$.
   \end{enumerate}
   The reason for writing $\psi\backepsilon' Y$ instead of $Y\in'\psi$ lies in the nature of the problem of preserving the cardinal invariant $\bfrak^\rho(\Iwf,\bar{L})$. This is related to a general setting, explained in e.g. \cite[Sect. 6.4, 6.5]{barju} and \cite[Sect. 2]{mejia}, for preserving cardinal invariants of the form $\bfrak_\sqsubset$ in generic extensions, where $\sqsubset$ is an $F_\sigma$ relation of real numbers and $\bfrak_\sqsubset$ is defined as the smallest size of a set of reals that is $\sqsubset$-unbounded. In the case of this definition, we are interested in $\sqsubset$ to be the relation $\not\backepsilon^{\;\prime}$ defined on $S(2^{\bar{a}},\rho^{id_\omega})\times\Owf(\Iwf,\bar{L})$, so $\bfrak_\sqsubset=\bfrak^{\rho^{id_\omega}}(\Iwf,\bar{L})$. The properties presented in Definitions \ref{DefPresb(I)param} and \ref{DefPresAddN} are particular cases of one of the properties studied in the general setting.
\end{definition}

From now on, fix $\Ewf\subseteq\omega^\omega$ such that
\begin{enumerate}[(i)]
   \item For any $e\in\Ewf$, $e$ is non-decreasing, converges to infinity, $e\leq id_\omega$ and $id_\omega-e$ converges to infinity.
   \item If $e\in\Ewf$ then there exists an $e'\in\Ewf$ such that $e+1\leq^* e'$.
   \item If $\Cwf\subseteq\Ewf$ is countable, there exists an $e\in\Ewf$ that $\leq^*$-dominates all the reals in $\Cwf$.
\end{enumerate}
For $b,\rho\in\omega^\omega$, put $\widetilde{S}(b,\rho):=\bigcup_{e\in\Ewf}S(b,\rho^e)$. For $L\subseteq\omega$, denote by
$S(b,\rho)\frestr L:=\{\psi\frestr L\ /\ \psi\in S(b,\rho)\}$, likewise for $\widetilde{S}(b,\rho)\frestr L$.
For $m<\omega$, put $\Pwf_{m,i}(\Iwf):=\{x\subseteq a_i\ /\ \varphi_i(x)\leq m\}$, $S(\Iwf,L,m,\rho):=S(\{\Pwf_{m,i}(\Iwf)\}_{i<\omega},\rho)\frestr L$ and $\widetilde{S}(\Iwf,L,m,\rho):=\widetilde{S}(\{\Pwf_{m,i}(\Iwf)\}_{i<\omega},\rho)\frestr L$.  Note that $\widetilde{S}(b,1)=S(b,1)$ and $\widetilde{S}(\Iwf,L,m,1)=S(\Iwf,L,m,1)$.

\begin{theorem}\label{b(I)leqparam}
   If $\lim_{i\to+\infty}\varphi_i(a_i)/\rho(i)=+\infty$ then $\bfrak(\Iwf)\leq\bfrak^\rho(\Iwf,\bar{L})$.
\end{theorem}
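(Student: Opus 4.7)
The plan is to build an $\Iwf$-$(\omega,\lambda)$-gap with $\lambda:=\bfrak^\rho(\Iwf,\bar{L})$, using $\Awf$ on the short side and the long side produced from a witness $\Psi=\{\psi_\alpha\}_{\alpha<\lambda}\subseteq S(2^{\bar{a}},\rho)$ of the invariant. The hypothesis $\varphi_i(a_i)/\rho(i)\to\infty$ forces $\varphi_i(a_i)\to\infty$, so every $A_n$ is $\Iwf$-positive, and Lemma~\ref{Eqvb(I)} legitimates this setup. The goal is to attach to each $\psi\in\Psi$ a set $B_\psi\in\Owf(\Iwf,\bar{L})$ so that no $C\in\Owf(\Iwf,\bar{L})$ separates $\langle\Awf,\{B_\psi\}_{\psi\in\Psi}\rangle$; this gives $\bfrak(\Iwf)\leq\lambda$.

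By Lemma~\ref{tallfrag}(iii) fix, for each $n<\omega$, a bound $l_n>n$ such that every $x\subseteq a_i$ with $\varphi_i(x)>n$ contains $x'\subseteq x$ with $n<\varphi_i(x')\leq l_n$. Choose $m\in\omega^\omega$ with $m(i)\to\infty$ yet $m(i)=o(\varphi_i(a_i)/\rho(i))$; such an $m$ exists precisely by the hypothesis. Given $\psi\in\Psi$, set $\psi^*(i):=\{x\in\psi(i):\varphi_i(x)\leq m(i)\}$ and $U_\psi(i):=\bigcup\psi^*(i)$, so subadditivity gives $\varphi_i(U_\psi(i))\leq\rho(i)\cdot m(i)$. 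For each $i\in L_n$ with $\varphi_i(a_i)-\rho(i)m(i)>n$, apply Lemma~\ref{tallfrag}(iii) to $a_i\menos U_\psi(i)$ to extract $B_\psi(i)\subseteq a_i\menos U_\psi(i)$ with $\varphi_i(B_\psi(i))\in(n,l_n]$; otherwise put $B_\psi(i):=\varnothing$. Let $B_\psi:=\bigcup_i B_\psi(i)$. Since $\varphi_i(B_\psi\cap a_i)\leq l_n$ on $L_n$, we get $B_\psi\cap A_n\in\Iwf$ for every $n$, so $B_\psi\in\Owf(\Iwf,\bar{L})$.

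The crux is the calibration of $m$: it must grow fast enough that for every candidate $C\in\Owf(\Iwf,\bar{L})$ the uniform $L_n$-bound $M_C(n):=\sup_{i\in L_n}\varphi_i(C\cap a_i)<\infty$ is eventually dominated by $m(i)$ along $L_n$, yet slowly enough that $\varphi_i(a_i)-\rho(i)m(i)>n$ still holds cofinally in $L_n$. The asymptotic $\varphi_i(a_i)/\rho(i)\to\infty$ is exactly what makes these two demands simultaneously achievable, and this is where the hypothesis enters; I expect this balance to be the main subtlety in the argument.

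To conclude the gap property, assume toward contradiction that some $C\in\Owf(\Iwf,\bar{L})$ separated the pair. Pick $\psi\in\Psi$ with $\psi\backepsilon' C$, so for each $n$ there are infinitely many $i\in L_n$ with $C\cap a_i\in\psi(i)$. Among these, all but finitely many also satisfy $\varphi_i(C\cap a_i)\leq M_C(n)\leq m(i)$ and have $B_\psi(i)$ nonempty. For each such $i$, $C\cap a_i\in\psi^*(i)\subseteq U_\psi(i)$ forces $B_\psi(i)\cap C=\varnothing$, so $\varphi_i((B_\psi\menos C)\cap a_i)=\varphi_i(B_\psi(i))>n$. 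Since $n$ was arbitrary, $\{\varphi_i((B_\psi\menos C)\cap a_i)\}_{i<\omega}$ is unbounded, hence $B_\psi\not\subseteq_\Iwf C$, contradicting that $C$ separates. Thus $\bfrak(\Iwf)\leq|\{B_\psi:\psi\in\Psi\}|\leq\lambda$, as required.
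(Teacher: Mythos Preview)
Your proof is correct and follows essentially the same strategy as the paper: for each $\psi$ in a witness $\Psi$, build $B_\psi\in\Owf(\Iwf,\bar{L})$ by using tallness (Lemma~\ref{tallfrag}(iii)) to extract from each $a_i\menos U_\psi(i)$ a piece of submeasure in $(n,l_n]$, where $U_\psi(i)$ collects the small-submeasure members of $\psi(i)$. The paper proceeds slightly differently in packaging: instead of fixing a single threshold function $m(i)$ in advance, it proves a more general auxiliary claim (handling countably many slaloms simultaneously by stratifying over levels $k$ and choosing $N_k$ with $\varphi_i(a_i)>n+m_k\cdot\rho(i)$ for $i\geq N_k$), then specializes by setting $\psi_k(i)=\{x\in\psi(i):\varphi_i(x)\leq k\}$. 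Your single-function approach is a clean streamlining of this. One remark: your ``crux'' paragraph overstates the difficulty of calibrating $m$; since each $M_C(n)$ is a fixed finite number, any $m$ with $m(i)\to\infty$ eventually dominates it, and any such $m$ with $m(i)=o(\varphi_i(a_i)/\rho(i))$ (say $m(i)=\lfloor(\varphi_i(a_i)/\rho(i))^{1/2}\rfloor$) satisfies both requirements simultaneously with no tension.
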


\begin{proof}
  \begin{claim}\label{diagSlalom}
     Let $L\subseteq\omega$ be infinite, $A:=\bigcup_{i\in L}a_i$, $n<\omega$, $f\in\omega^\omega$ and $\{\psi_k\}_{k<\omega}$ a sequence of slaloms such that $\psi_k\in S(\Iwf,L,f(k),\rho)$. Then, there exists a $Z\in\Iwf\frestr A$ such that
     \begin{enumerate}[(i)]
       \item $\forall^\infty_{i\in L}(\varphi_i(Z\cap a_i)>n)$ and
       \item $\forall_{k<\omega}\forall^\infty_{i\in L}\forall_{x\in\psi_k(i)}(x\cap Z=\varnothing)$.
     \end{enumerate}
  \end{claim}
  \begin{proof}
    For $k<\omega$, put $m_k:=(k+1)\cdot\max_{j\leq k}\{f(j)\}$. Let $\{N_k\}_{k<\omega}$ be a strictly increasing sequence of natural numbers such that $\varphi_i(a_i)>n+m_k\cdot\rho(i)$ for all $i\geq N_k$. Now, by tallness, find $l<\omega$ as in Lemma \ref{tallfrag}(iii) applied to $m=n$. For $i\in L\cap[N_k,N_{k+1})$, as
    $\varphi_i(\bigcup_{j\leq k}\bigcup_{x\in\psi_j(i)}x)\leq m_k\cdot\rho(i)$, $a_i\menos\bigcup_{j\leq k}\bigcup_{x\in\psi_j(i)}x$ has submeasure bigger than $n$, so, by tallness, it contains a $z_i$ with submeasure in
    $(n,l]$. Therefore, $Z=\bigcup_{i\in L\cap[N_0,\omega)}z_i$ is as required.
  \end{proof}
  \begin{claim}\label{diagSlalom2}
     Let $L\subseteq\omega$ infinite, $A:=\bigcup_{i\in L}a_i$, $\psi\in S(2^{\bar{a}},\rho)\frestr L$ and $n<\omega$. Then, there exists a $Z_\psi\in\Iwf\frestr A$ such that
     \begin{enumerate}[(i)]
       \item $\forall^\infty_{i\in L}(\varphi_i(Z_\psi\cap a_i)>n)$ and
       \item $\forall_{k<\omega}\forall^\infty_{i\in L}\forall_{x\in\psi(i)}(\varphi_i(x)\leq k\imp x\cap Z_\psi=\varnothing)$.
     \end{enumerate}
  \end{claim}
  \begin{proof}
    For each $k<\omega$ put $\psi_k(i)=\{x\in\psi(i)\ / \varphi_i(x)\leq k\}$ and apply the previous claim with $f=id_\omega$.
  \end{proof}
  Now, let $\Psi\subseteq S(2^{\bar{a}},\rho)$ be a witness of $\bfrak^\rho(\Iwf,\bar{L})$. For each $\psi\in\Psi$ and $n<\omega$, let $Z_{\psi,n}\in\Iwf\frestr A_n$ be as in Claim \ref{diagSlalom2} applied to $L_n$, $A_n$, $\psi\frestr L_n$ and $n$. Put $Z_\psi:=\bigcup_{n<\omega}Z_{\psi,n}$, which is clearly in $\Owf(\Iwf,\bar{L})$. It is enough to prove that the orthogonal pair $\langle\Awf,\{Z_\psi\ / \ \psi\in\Psi\}\rangle$ is an $\Iwf$-gap. Let $X\in\Owf(\Iwf,\bar{L})$ and choose $\psi\in\Psi$ such that $\psi\backepsilon' X$. We show that, for any $n<\omega$ there is some $i<\omega$ such that $\varphi_i(a_i\cap Z_\psi\menos X)>n$. Choose $m<\omega$ such that $\bar{\varphi}(X\cap A_n)\leq m$, that is, $\varphi_i(X\cap a_i)\leq m$ for all $i\in L_n$. By Claim \ref{diagSlalom2}, choose a large enough $i\in L_n$ such that $\psi(i)\backepsilon X\cap a_i$, $\varphi_i(Z_\psi\cap a_i)>n$ and $X\cap a_i\cap Z_\psi=\varnothing$, so $\varphi_i(a_i\cap Z_\psi\menos X)>n$.
\end{proof}

\begin{definition}\label{DefPresb(I)param}
   Let $\rho\in\omega^\omega$, $\rho>0$, $\Por$ a poset and $\theta$ a cardinal number. Consider the following statement:
   \begin{description}
     \item[$(+^{<\theta}_{\Por,\bar{L},\Iwf,\rho})$] for every $m,n<\omega$ and $\dot{\psi}$ a $\Por$-name for a real in $\widetilde{S}(\Iwf,L_n,m,\rho)$, there exists a nonempty $\Psi\subseteq\widetilde{S}(\Iwf,L_n,m,\rho)$ of size $<\theta$ such that, for any $\psi''\in S(2^{\bar{a}},\rho^{id_\omega})$, if $\forall_{\psi'\in\Psi}\exists^\infty_{i\in L_n}(\psi''(i)\supseteq \psi'(i))$, then
         \[\Vdash\exists^\infty_{i\in L_n}(\psi''(i)\supseteq \dot{\psi}(i)).\]
   \end{description}
   $(+^1_{\Por,\bar{L},\Iwf,\rho})$ denotes $(+^{<2}_{\Por,\bar{L},\Iwf,\rho})$. Note that the previous property is simpler for the case $\rho=1$.
\end{definition}

This property serves for preserving the cardinal $\bfrak^{\rho^{id_\omega}}(\Iwf,\bar{L})$ small in generic extensions. From now on, fix an uncountable regular cardinal $\theta$. $\Psi''\subseteq S(2^{\bar{a}},\rho^{id_\omega})$ is said to be a \emph{$<\theta$-$\rho$-strong covering family (with respect to $\Iwf$ and $\bar{L}$)} if, for any $\Psi\subseteq\bigcup_{m,n<\omega}\widetilde{S}(\Iwf,L_n,m,\rho)$ of size $<\theta$, there exists a $\psi''\in\Psi''$ such that, for all $n<\omega$ and $\psi\in\Psi$ such that $\dom\psi=L_n$, $\exists^\infty_{i\in L_n}(\psi''(i)\supseteq\psi(i))$.

\begin{lemma}\label{Presb(I)paramfsi}
   \begin{enumerate}[(a)]
     \item $\bfrak^{\rho^{id_\omega}}(\Iwf,\bar{L})\leq|\Psi''|$ for any $<\theta$-$\rho$-strong covering family $\Psi''$.
     \item If $(+^{<\theta}_{\Por,\bar{L},\Iwf,\rho})$ holds, then $\Por$ preserves $<\theta$-$\rho$-strong covering families.
     \item If $\langle\Por_\alpha,\Qnm_\alpha\rangle_{\alpha<\theta}$ is a fsi of non-trivial ccc posets, then it adds a $<\theta$-$\rho$-strong covering family of size $\theta$ (of Cohen reals).
     \item $(+^{<\theta}_{\Por,\bar{L},\Iwf,\rho})$ is preserved by fsi of ccc posets.
   \end{enumerate}
\end{lemma}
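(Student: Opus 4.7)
The plan is to handle the four claims in order. Parts (a), (b) and (c) are direct combinatorial verifications; part (d) fits the standard preservation template for relations of the form $\bfrak_\sqsubset$ in finite support iterations, as developed in~\cite[Sect.~6.4, 6.5]{barju} and~\cite[Sect.~2]{mejia}.

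For (a), given a $<\theta$-$\rho$-strong covering family $\Psi''$ and any $Y\in\Owf(\Iwf,\bar{L})$, I use $Y\cap A_n\in\Iwf$ to choose, for each $n<\omega$, a bound $m_n$ with $\varphi_i(Y\cap a_i)\leq m_n$ for all $i\in L_n$, and define the singleton slalom $\psi_n\in S(\Iwf,L_n,m_n,1)\subseteq\widetilde{S}(\Iwf,L_n,m_n,\rho)$ by $\psi_n(i):=\{Y\cap a_i\}$. Since $|\{\psi_n\ /\ n<\omega\}|=\aleph_0<\theta$, the strong covering property yields $\psi''\in\Psi''$ with $\exists^\infty_{i\in L_n}(\psi''(i)\ni Y\cap a_i)$ for every $n$; this is exactly $\psi''\backepsilon' Y$, so $\bfrak^{\rho^{id_\omega}}(\Iwf,\bar{L})\leq|\Psi''|$.

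For (b), fix a ground model strong covering family $\Psi''$ and let $\dot{\Psi}=\{\dot{\psi}_\alpha\ /\ \alpha<\lambda\}$ with $\lambda<\theta$ be a $\Por$-name for a size-$<\theta$ subset of $\bigcup_{m,n}\widetilde{S}(\Iwf,L_n,m,\rho)$. Using ccc, I split each $\dot{\psi}_\alpha$ into countably many sub-names $\dot{\psi}_{\alpha,k}$, each forced below a specific condition into a fixed $\widetilde{S}(\Iwf,L_{n_{\alpha,k}},m_{\alpha,k},\rho)$, apply $(+^{<\theta}_{\Por,\bar{L},\Iwf,\rho})$ to each to obtain ground model witnesses $\Psi_{\alpha,k}$ of size $<\theta$, and form $\Psi:=\bigcup_{\alpha,k}\Psi_{\alpha,k}$, which has size $<\theta$ by the regularity of $\theta$. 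The strong covering of $\Psi''$ applied to $\Psi$ yields some $\psi''$, and the conclusion of $(+^{<\theta})$ transports the $\supseteq$-coverage from $\Psi$ to each $\dot{\psi}_\alpha$ in the forcing extension.

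For (c), a finite support iteration of non-trivial ccc posets of length $\theta$ adds a Cohen real over each intermediate model at every limit stage of cofinality $\omega$, giving $\theta$ many such stages. I code each Cohen real at stage $\alpha$ as a generic element $\psi''_\alpha$ of $S(2^{\bar{a}},\rho^{id_\omega})$, viewed as the countable product $\prod_{i<\omega}[\Pwf(a_i)]^{\leq\rho(i)^i}$ of finite sets. For any ground-model $\psi\in S(\Iwf,L_n,m,\rho^e)\frestr L_n$ with $e\in\Ewf$, the bound $|\psi(i)|\leq\rho(i)^{e(i)}\leq\rho(i)^i$ makes it dense, for any $N$, to extend any condition to force $\psi''_\alpha(i)\supseteq\psi(i)$ at some $i\in L_n$ with $i\geq N$; genericity then gives $\exists^\infty_{i\in L_n}(\psi''_\alpha(i)\supseteq\psi(i))$. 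By ccc and regularity of $\theta$, any size-$<\theta$ family in $V^{\Por_\theta}$ sits in some $V^{\Por_\alpha}$, so any Cohen slalom added after stage $\alpha$ strongly covers it, and $\{\psi''_\alpha\ /\ \alpha<\theta\}$ is as desired.

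Part (d) is the main technical step and the principal obstacle. At a successor $\Por_\alpha*\Qnm_\alpha$, given a two-step name $\dot{\psi}$ for an element of $\widetilde{S}(\Iwf,L_n,m,\rho)$, I apply the inductive hypothesis inside $V^{\Por_\alpha}$ to the $\Qnm_\alpha$-name to obtain a $\Por_\alpha$-name $\dot{\Psi}^*$ of size $<\theta$; enumerating its members as $<\theta$ many $\Por_\alpha$-names (via ccc), I apply the inductive hypothesis for $\Por_\alpha$ to each and union the results into a single ground model witness of size $<\theta$. At a limit stage, ccc together with the fact that every element of $\widetilde{S}(\Iwf,L_n,m,\rho)$ is an $\omega$-sequence of finite objects lets any name at the limit be reflected down to some earlier stage, where the inductive hypothesis applies. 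The hard part is the successor composition: one must track the parameter $e\in\Ewf$ carefully when combining two applications of the property, invoking clause (iii) of the closure on $\Ewf$ to absorb the countably many $e$'s arising from the inner and outer witnesses into a single dominating $e'\in\Ewf$.
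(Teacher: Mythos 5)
Your proofs of (a), (b), and (c) follow the paper's proof essentially verbatim: (a) builds the singleton slaloms $\psi_n(i)=\{Y\cap a_i\}$ exactly as the paper does; (b) applies $(+^{<\theta}_{\Por,\bar L,\Iwf,\rho})$ to each of $<\theta$ many names of fixed type $(n,m)$ (your extra step of splitting names via ccc maximal antichains deciding $(n,m)$ is a reasonable way to justify the paper's implicit assumption that names come organized by $(n,m)$); and (c) uses the same Cohen coding $S_{<\omega}(2^{\bar a},\rho^{id_\omega})$ with end-extension.

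Part (d) is where you part company with the paper — the paper simply cites the standard preservation theorem for finite support iterations [Bartoszy\'nski--Judah, Thm.~6.4.12.2] and Brendle — and your sketch contains two genuine problems. First, the supposedly ``hard part'' of combining $e\in\Ewf$ parameters across the two applications at a successor step is not actually there: the hypothesis $(+^{<\theta}_{\Qnm,\bar L,\Iwf,\rho})$ in $V^{\Por}$ produces a witness $\Psi^*\subseteq\widetilde S(\Iwf,L_n,m,\rho)$, and $\widetilde S$ is by definition a union over \emph{all} $e\in\Ewf$, so each member of $\Psi^*$ carries its own $e$ and the conclusion quantifies universally over $\psi''\in S(2^{\bar a},\rho^{id_\omega})$. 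Applying $(+^{<\theta}_{\Por,\bar L,\Iwf,\rho})$ to each of the $<\theta$ many $\Por$-names in $\Psi^*$ and taking the union of the outputs already lands in $\widetilde S(\Iwf,L_n,m,\rho)$ without any need to fuse the $e$'s into a single one. Property (iii) of $\Ewf$ is what makes the \emph{base cases} (Lemmas~\ref{Presb(I)paramcentered} and~\ref{Presb(I)param linked}) work — there one uses ccc to find a single $e\in\Ewf$ with $\Vdash\dot\psi\in S(\Iwf,L_n,m,\rho^e)$ — but it plays no role in the iteration step. Second, and more seriously, your treatment of the limit case is wrong for limits $\delta$ of cofinality $\omega$: it is simply not true that a $\Por_\delta$-name for a real reflects to some $\Por_\alpha$ with $\alpha<\delta$ (take $\dot\psi(i)$ to be a $\Por_{\alpha_i}$-name with $\alpha_i\nearrow\delta$). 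For $\cf(\delta)>\omega$ the reflection argument is correct (by ccc plus finite support), but for $\cf(\delta)=\omega$ the standard proof instead exploits the $F_\sigma$ form of the negation $\forall^\infty_{i\in L_n}(\psi''(i)\not\supseteq\psi(i))$: writing $\delta=\sup_k\alpha_k$, one interleaves names of ``tails'' $\dot\psi\frestr(L_n\menos N_k)$ living below earlier stages and applies the inductive hypothesis level by level, diagonalizing in $k$. Your write-up omits this entirely, so as stated the limit case of (d) has a gap.
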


\begin{proof}
   \begin{enumerate}[(a)]
     \item Let $Y\in\Owf(\Iwf,\bar{L})$ and, for $n<\omega$, put $\psi_n\in\widetilde{S}(\Iwf,L_n,\lceil\bar{\varphi}(Y\cap A_n)\rceil,\rho)$ where $\psi_n(i)=\{Y\cap a_i\}$ for all $i\in L_n$. As $\Psi''$ is a $<\theta$-$\rho$-strong covering family, there exists a $\psi''\in\Psi''$ such that, for all $n<\omega$, $\exists^\infty_{i\in L_n}(\psi''(i)\supseteq\psi_n(i))$. Therefore, $\psi''\backepsilon' Y$.
     \item Let $\Psi''$ be a $<\theta$-$\rho$-strong covering family. Let $\nu<\theta$ and, for $n,m<\omega$, let $\dot{\Psi}_{n,m}=\{\dot{\psi}_{n,m,\alpha}\ /\ \alpha<\nu\}$ be $\Por$-names for reals in $\widetilde{S}(\Iwf,L_n,m,\rho)$. For each $n,m<\omega$ and $\alpha<\nu$, let $\Psi'_{m,n,\alpha}\subseteq\widetilde{S}(\Iwf,L_n,m,\rho)$ be a witness of $(+^{<\theta}_{\Por,\bar{L},\Iwf,\rho})$ for $n$, $m$ and $\dot{\psi}_{n,m,\alpha}$, so it has size $<\theta$. As $\Psi''$ is a $<\theta$-$\rho$-strong covering family, there exists a $\psi''\in\Psi''$ such that $\exists^\infty_{i\in L_n}(\psi''(i)\supseteq\psi'(i))$ for all $\psi'\in\Psi'_{n,m,\alpha}$, $n,m<\omega$ and all $\alpha<\nu$. Thus, $\Por$ forces that $\exists^\infty_{i\in L_n}(\psi''(i)\supseteq\dot{\psi}_{n,m,\alpha})$.
     \item Consider Cohen forcing $\Cor=S_{<\omega}(2^{\bar{a}},\rho^{id_\omega})$ ordered by end extension. If $\dot{\psi}''$ is a $\Cor$-name for the Cohen generic real, it is clear that, for any $n<\omega$ and $\psi\in S(2^{\bar{a}},\rho^{id_\omega})\frestr L_n$, $\Cor$ forces that $\exists^\infty_{i\in L_n}(\dot{\psi}''(i)=\psi(i))$. Now, as $\langle\Por_\alpha,\Qnm_\alpha\rangle_{\alpha<\theta}$ adds Cohen reals at each limit step, we get that $\{\dot{\psi}''_{\alpha}\ /\ \alpha<\theta\textrm{ limit}\}$ is forced by $\Por_\theta$ to be a $<\theta$-$\rho$-strong covering family.
     \item This is a standard argument for preservation properties in fsi, see e.g. \cite[Thm. 6.4.12.2]{barju} and \cite{brendle}.
   \end{enumerate}
\end{proof}

Now, we explore some conditions for a poset to satisfy the property of Definition \ref{DefPresb(I)param}.

\begin{lemma}\label{Presb(I)paramcentered}
   Let $\nu<\theta$ be an infinite cardinal. If $\Por$ is a ccc $\nu$-centered poset, then $(+^{<\theta}_{\Por,\bar{L},\Iwf,\rho})$ holds.
\end{lemma}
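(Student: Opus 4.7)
Fix a centered decomposition $\Por=\bigcup_{\xi<\nu}P_\xi$, and fix $m,n<\omega$ together with a $\Por$-name $\dot\psi$ for a slalom in $\widetilde{S}(\Iwf,L_n,m,\rho)=\bigcup_{e\in\Ewf}S(\Iwf,L_n,m,\rho^e)$. The first step is to trap the $\Ewf$-exponent: since $\Por$ is ccc, choose a countable family $\{e_k\}_{k<\omega}\subseteq\Ewf$ such that the downward-closed sets $Q_k:=\{p\in\Por:p\Vdash\dot\psi\in S(\Iwf,L_n,m,\rho^{e_k})\}$ have union dense in $\Por$, and observe that each $P_\xi\cap Q_k$ inherits centeredness from $P_\xi$.

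Next, for $(\xi,k)\in\nu\times\omega$ and $i\in L_n$ define
\[\psi_{\xi,k}(i):=\{x\in\Pwf_{m,i}(\Iwf):\exists q\in P_\xi\cap Q_k,\ q\Vdash x\in\dot\psi(i)\}.\]
The standard centeredness trick then yields $|\psi_{\xi,k}(i)|\leq\rho^{e_k}(i)$: any finite $F\subseteq\psi_{\xi,k}(i)$ has witnesses in the centered set $P_\xi\cap Q_k$ whose common extension $r$ in $\Por$ lies in $Q_k$ by downward closure, so $r\Vdash F\subseteq\dot\psi(i)$ and hence $|F|\leq\rho^{e_k}(i)$. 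Therefore $\psi_{\xi,k}\in S(\Iwf,L_n,m,\rho^{e_k})\subseteq\widetilde{S}(\Iwf,L_n,m,\rho)$. Using properties~(ii) and~(iii) of $\Ewf$ to absorb finite multiplicative factors, every finite union $\psi_F:=\bigcup_{(\xi,k)\in F}\psi_{\xi,k}$ (for finite $F\subseteq\nu\times\omega$) still lies in $\widetilde{S}(\Iwf,L_n,m,\rho)$. Set $\Psi:=\{\psi_F:F\subseteq\nu\times\omega\ \text{finite}\}$; then $|\Psi|\leq\nu<\theta$.

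To verify $(+^{<\theta}_{\Por,\bar L,\Iwf,\rho})$, suppose $\psi''\in S(2^{\bar a},\rho^{id_\omega})$ satisfies the hypothesis and, toward contradiction, that some $p\in\Por$ and $N<\omega$ force $\psi''(i)\not\supseteq\dot\psi(i)$ for every $i\in L_n\cap[N,\omega)$. After strengthening, assume $p\in P_{\xi_0}\cap Q_{k_0}$. For each such $i$, the set
\[D_i:=\{q\leq p:\exists x\in\Pwf_{m,i}(\Iwf)\setminus\psi''(i),\ q\Vdash x\in\dot\psi(i)\}\]
is dense below $p$; by ccc pick a countable maximal antichain $A_i\subseteq D_i$. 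Each $q\in A_i$ lies in $Q_{k_0}$ by downward closure and in some centered piece $P_{\xi(q)}$, so the witnessing $x_q$ belongs to $\psi_{\xi(q),k_0}(i)\setminus\psi''(i)$.

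The crux---and the main obstacle---is to concentrate the labels $\{\xi(q):q\in A_i,\ i\geq N\}$ into a single finite $F_0\subseteq\nu\times\{k_0\}$ such that $\psi_{F_0}(i)\not\subseteq\psi''(i)$ holds for cofinitely many $i\in L_n$, which contradicts the hypothesis applied to $\psi_{F_0}\in\Psi$. This concentration step is where the interplay between centeredness and ccc is used decisively: ccc keeps each $A_i$ countable while centeredness lets one combine witnesses within any fixed finite batch of pieces without breaking the $\rho^{e_{k_0}}$-bound, and (in the spirit of the Kamo--Osuga arguments) a careful choice of witnesses $q\in A_i$ aligns the labels across $i$ so that finitely many recurrent $\xi$'s can be absorbed into one $\psi_F\in\Psi$. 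Once this concentration is in hand, the resulting contradiction with the hypothesis completes the verification of $(+^{<\theta}_{\Por,\bar L,\Iwf,\rho})$.
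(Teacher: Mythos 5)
There is a genuine gap. The root of the trouble is the choice of $\psi_{\xi,k}(i)$ as the set of \emph{all} $x$ that \emph{some} $q\in P_\xi\cap Q_k$ forces into $\dot\psi(i)$. With this definition, a condition $p\in P_\xi\cap Q_k$ need not have any extension $q$ forcing $\dot\psi(i)\subseteq\psi_{\xi,k}(i)$: an extension $q\leq p$ deciding $\dot\psi(i)=Y$ is typically no longer in $P_\xi$, so for $x\in Y$ one cannot conclude $x\in\psi_{\xi,k}(i)$ (the witness $q$ is outside the centered piece). Thus the straightforward density verification is blocked, and you are pushed into the contradiction/antichain argument, which then requires exactly the ``concentration'' step that you identify as the main obstacle and do not carry out. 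That step in fact has no reason to succeed: for different $i\in L_n$ the maximal antichains $A_i$ can involve disjoint sets of labels $\xi(q)$, so no single finite $F_0\subseteq\nu\times\omega$ need witness $\psi_{F_0}(i)\not\subseteq\psi''(i)$ for cofinitely (or even infinitely) many $i$.

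The paper's proof sidesteps all of this by a much simpler choice: using ccc and property (iii) of $\Ewf$ it first fixes a single $e\in\Ewf$ with $\Vdash\dot\psi\in S(\Iwf,L_n,m,\rho^e)$, and then, for each $\alpha<\nu$ and $i\in L_n$, it chooses $\psi'_\alpha(i)$ to be a \emph{single} candidate value $Y\subseteq\Pwf_{m,i}(\Iwf)$ of size $\leq\rho(i)^{e(i)}$ with the property that no $p\in P_\alpha$ forces $\dot\psi(i)\neq Y$. Existence of such a $Y$ is where centeredness enters: the possible values of $\dot\psi(i)$ form a finite set $\mathcal{Y}$, and if for each $Y\in\mathcal{Y}$ some $p_Y\in P_\alpha$ forced $\dot\psi(i)\neq Y$, a common extension of all $p_Y$ (using centeredness of $P_\alpha$ against the finite $\mathcal Y$) would force $\dot\psi(i)$ to take no value at all, a contradiction. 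With this definition, $\Psi=\{\psi'_\alpha : \alpha<\nu\}$ has size $\nu<\theta$, and the verification is a one-line density argument: given $p\in P_\alpha$ and $i_0$, pick $i>i_0$ in $L_n$ with $\psi''(i)\supseteq\psi'_\alpha(i)$ and extend $p$ to $q$ forcing $\dot\psi(i)=\psi'_\alpha(i)\subseteq\psi''(i)$. You should replace the ``union of possible elements'' slalom by this ``representative value'' slalom; once you do, the contradiction argument and the concentration step become unnecessary.
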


\begin{proof}
   Let $\Por=\bigcup_{\alpha<\nu}P_\alpha$ where each $P_\alpha$ is centered. Fix $n,m<\omega$ and let $\dot{\psi}$ be a $\Por$-name for a real in $\widetilde{S}(\Iwf,L_n,m,\rho)$. By ccc-ness, we can find $e\in\Ewf$ such that $\dot{\psi}$ is forced to be in $S(\Iwf,L_n,m,\rho^e)$. For each $\alpha<\nu$ and $i\in L_n$, choose a $\psi'_{\alpha}(i)\subseteq\Pwf_{m,i}(\Iwf)$ of size $\leq\rho(i)^{e(i)}$ such that $\forall_{p\in P_\alpha}(p\not\Vdash\psi'_\alpha(i)\neq\dot{\psi}(i))$. This is possible by the centeredness of $P_\alpha$. Then, $\psi'_\alpha\in S(\Iwf,L_n,m,\rho^e)$. Put $\Psi:=\{\psi'_\alpha\ /\ \alpha<\nu\}$.

   $\Psi$ witnesses $(+^{<\theta}_{\Por,\bar{L},\Iwf,\rho})$ for $\dot{\psi}$. Indeed, let $\psi''\in S(2^{\bar{a}},\rho^{id_\omega})$ and assume that $\exists^\infty_{i\in L_n}(\psi''(i)$ $\supseteq\psi'_\alpha(i))$ for any $\alpha<\nu$. We show that $\Vdash\exists^\infty_{i\in L_n}(\psi''(i)\supseteq\dot{\psi}(i))$. Let $p\in\Por$ and $i_0\in\omega$ be arbitrary, choose $\alpha<\nu$ such that $p\in P_\alpha$ and also find $i>i_0$ in $L_n$ such that $\psi''(i)\supseteq\psi'_\alpha(i)$. By definition of $\psi'_\alpha(i)$, there exists a $q\leq p$ such that $q\Vdash\psi'_\alpha(i)=\dot{\psi}(i)$ so, clearly, $q\Vdash\psi''(i)\supseteq\dot{\psi}(i)$.
\end{proof}

We also want conditions that imply that a poset like in Definition \ref{DefLocPoset} satisfies a preservation property of Definition \ref{DefPresb(I)param}(2). The following notion is useful for this.

\begin{definition}[Kamo and Osuga {\cite[Def. 5]{kamoosuga}}]\label{Defparamlinked}
   Let $\pi,\rho\in\omega^\omega$. A poset $\Qor$ is \emph{$\langle\pi,\rho\rangle$-linked} if there is a sequence
   $\langle Q_{i,j}\rangle_{i<\omega,j<\rho(i)}$ of subsets of $\Qor$ such that
   \begin{enumerate}[(a)]
     \item $Q_{i,j}$ is $\pi(i)$-linked and
     \item for any $q\in\Qor$, $\forall^\infty_{i<\omega}\exists_{j<\rho(i)}(q\in Q_{i,j})$.
   \end{enumerate}
\end{definition}

It is clear that, if $\pi\geq^*2$, then any $\langle\pi,\rho\rangle$-linked poset is $\sigma$-linked.

\begin{lemma}\label{Presb(I)param linked}
   Let $\pi\in\omega^\omega$ be such that $\big|[\Pwf(a_i)]^{\leq\rho(i)^i}\big|\leq\pi(i)$ for all but finitely many $i<\omega$. If $\Qor$ is a $\langle\pi,\rho\rangle$-linked poset, then $(+^1_{\Qor,\bar{L},\Iwf,\rho})$ holds.
\end{lemma}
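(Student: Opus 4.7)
The plan is to construct, for given $n,m<\omega$ and a $\Qor$-name $\dot\psi$ for an element of $\widetilde S(\Iwf,L_n,m,\rho)$, a single slalom $\psi'\in\widetilde S(\Iwf,L_n,m,\rho)$ witnessing $(+^1_{\Qor,\bar L,\Iwf,\rho})$. First, since $\dot\psi$ is forced to lie in $\widetilde S(\Iwf,L_n,m,\rho)$, by ccc-ness of $\Qor$ (which follows from $\sigma$-linkedness) pick $e\in\Ewf$ such that $\Vdash\dot\psi\in S(\Iwf,L_n,m,\rho^e)$, and use property (ii) of $\Ewf$ to fix $e'\in\Ewf$ with $e+1\leq^* e'$. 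Let $\langle Q_{i,j}\rangle_{i<\omega,j<\rho(i)}$ witness that $\Qor$ is $\langle\pi,\rho\rangle$-linked.

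The key step is the following pigeonhole argument: for all sufficiently large $i\in L_n$ and every $j<\rho(i)$, there exists $\psi'_{i,j}\in[\Pwf_{m,i}(\Iwf)]^{\leq\rho(i)^{e(i)}}$ such that no $p\in Q_{i,j}$ forces $\dot\psi(i)\not\subseteq\psi'_{i,j}$. Indeed, because $e\leq id_\omega$, the hypothesis gives $|[\Pwf_{m,i}(\Iwf)]^{\leq\rho(i)^{e(i)}}|\leq|[\Pwf(a_i)]^{\leq\rho(i)^i}|\leq\pi(i)$, so we can enumerate the candidate sets as $\{y_k\ /\ k<\pi(i)\}$. If no candidate worked, then for each $k$ some $p_k\in Q_{i,j}$ would force $\dot\psi(i)\not\subseteq y_k$; by $\pi(i)$-linkedness the $p_k$ have a common extension $q^*$, and extending $q^*$ further to decide $\dot\psi(i)=y_{k^*}$ would give $q^{**}\Vdash y_{k^*}\not\subseteq y_{k^*}$, a contradiction. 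Now set $\psi'(i):=\bigcup_{j<\rho(i)}\psi'_{i,j}$; then $|\psi'(i)|\leq\rho(i)\cdot\rho(i)^{e(i)}=\rho(i)^{e(i)+1}\leq\rho(i)^{e'(i)}$ eventually, so after adjusting $\psi'$ on a finite set we may assume $\psi'\in S(\Iwf,L_n,m,\rho^{e'})\frestr L_n\subseteq\widetilde S(\Iwf,L_n,m,\rho)$.

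To verify the implication, fix $\psi''\in S(2^{\bar a},\rho^{id_\omega})$ with $\exists^\infty i\in L_n(\psi''(i)\supseteq\psi'(i))$, let $q\in\Qor$ and $i_0<\omega$ be arbitrary. By $\langle\pi,\rho\rangle$-linkedness, for all but finitely many $i$ there is $j(i)<\rho(i)$ with $q\in Q_{i,j(i)}$; pick some $i\in L_n$ with $i\geq i_0$, $q\in Q_{i,j(i)}$, and $\psi''(i)\supseteq\psi'(i)\supseteq\psi'_{i,j(i)}$. By choice of $\psi'_{i,j(i)}$, $q$ does not force $\dot\psi(i)\not\subseteq\psi'_{i,j(i)}$, so some $q'\leq q$ forces $\dot\psi(i)\subseteq\psi'_{i,j(i)}\subseteq\psi''(i)$. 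This density argument shows $\Vdash\exists^\infty i\in L_n(\psi''(i)\supseteq\dot\psi(i))$, establishing $(+^1_{\Qor,\bar L,\Iwf,\rho})$.

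The main obstacle is the pigeonhole step: one must see that $\pi(i)$-linkedness, calibrated to the combinatorial bound $|[\Pwf(a_i)]^{\leq\rho(i)^i}|\leq\pi(i)$, suffices to pin down a single good $\psi'_{i,j}$ of the correct size $\rho(i)^{e(i)}$ per piece $Q_{i,j}$, so that the $\rho(i)$-fold union over $j$ still fits (after one application of property (ii) of $\Ewf$) into $\widetilde S(\Iwf,L_n,m,\rho)$.
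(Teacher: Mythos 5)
Your proof is correct and follows essentially the same approach as the paper's: use ccc-ness to pin down a single $e\in\Ewf$, run a pigeonhole/linkedness argument per $(i,j)$ to pick a good candidate $\psi'_{i,j}$, take the $\rho(i)$-fold union, and use property (ii) of $\Ewf$ to keep the result inside $\widetilde S(\Iwf,L_n,m,\rho)$. The only cosmetic difference is that the paper's pigeonhole step produces $Y_{i,j}$ with $p\not\Vdash\dot\psi(i)\neq Y_{i,j}$ (some extension forces equality), whereas you produce $\psi'_{i,j}$ with $p\not\Vdash\dot\psi(i)\not\subseteq\psi'_{i,j}$ (some extension forces containment); this is marginally weaker but exactly what the final density argument needs, so it makes no difference.
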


\begin{proof}
   Let $\langle Q_{i,j}\rangle_{i<\omega,j<\rho(i)}$ be a witness of the $\langle\pi,\rho\rangle$-linkedness of $\Qor$. Fix $n,m<\omega$ and let $\dot{\psi}$ be a $\Qor$-name for a real in $\widetilde{S}(\Iwf,L_n,m,\rho)$. By ccc-ness, find $e\in\Ewf$ such that $\dot{\psi}$ is a $\Qor$-name for a real in $S(\Iwf,L_n,m,\rho^e)$. For all but finitely many $i\in L_n$, for every $j<\rho(i)$, as $\pi(i)\geq\big|[\Pwf(a_i)]^{\leq\rho(i)^i}\big|$ and $Q_{i,j}$ is $\pi(i)$-linked, there is a $Y_{i,j}\subseteq\Pwf_{m,i}(\Iwf)$ of size $\leq\rho(i)^{e(i)}$ such that $\forall_{p\in Q_{i,j}}(p\not\Vdash\dot{\psi}(i)\neq Y_{i,j})$. Put $\psi'(i):=\bigcup_{j<\rho(i)}Y_{i,j}$.

   There exists an $e'\in\Ewf$ such that $e+1\leq^*e'$, so we may assume, by changing $\psi'(i)$ at finitely many $i$ if necessary, that $\psi'\in S(\Iwf,L_n,m,\rho^{e'})$. $\{\psi'\}$ witnesses $(+^1_{\Qor,\bar{L},\Iwf,\rho})$ for $\dot{\psi}$. Indeed, let $\psi''\in S(2^{\bar{a}},\rho^{id_\omega})$ such that $\exists^\infty_{i\in L_n}(\psi''(i)\supseteq\psi'(i))$. For $p\in\Qor$ and $i_0\in\omega$, choose an $i>i_0$ in $L_n$ and a $j<\rho(i)$ such that $\psi''(i)\supseteq\psi'(i)$ and $p\in Q_{i,j}$. Then, there exists a $q\leq p$ such that $q\Vdash Y_{i,j}=\dot{\psi}(i)$ so, clearly, $q\Vdash\psi''(i)\supseteq\psi'(i)\supseteq Y_{i,j}=\dot{\psi}(i)$.
\end{proof}

\begin{lemma}\label{LinkednessLocposet}
   Let $b,h\in\omega^\omega$ be non-decreasing functions with $b>0$ and $h$ converging to infinity. Let $\pi,\rho\in\omega^\omega$. If $\{m_k\}_{k<\omega}$ is a non-decreasing sequence of natural numbers that converges to infinity and, for all but finitely many $k<\omega$, $k\cdot\pi(k)\leq h(m_k)$ and
   $k\cdot|[b(m_k-1)]^{\leq k}|^{m_k}\leq\rho(k)$, then $\Qor^h_{b,\Fwf}$ is $\langle\pi,\rho\rangle$-linked for any $\Fwf\subseteq\R_b$.
\end{lemma}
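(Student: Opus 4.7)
The plan is to index the linked pieces by slaloms of length $m_k$ whose slots are small enough to simultaneously fit the counting bound supplied by the hypothesis and to define a legitimate common extension. For sufficiently large $k$ (so that both inequalities in the hypothesis hold), set
\[
\Sigma_k := \{\tilde s\in S_{m_k}(b,h)\ /\ \forall i<m_k\,(|\tilde s(i)|\leq k)\};
\]
then $|\Sigma_k|\leq|[b(m_k-1)]^{\leq k}|^{m_k}\leq \rho(k)/k$, so I can inject $\Sigma_k$ into $\rho(k)$. For each $\tilde s\in\Sigma_k$, set
\[
Q_{k,\tilde s} := \{(s,F)\in\Qor^h_{b,\Fwf}\ /\ |s|\leq m_k,\ s = \tilde s\frestr|s|,\ \forall x\in F\,\forall i\in[|s|,m_k)\,(x(i)\in\tilde s(i)),\ |F|\leq k\}.
\]

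The next step is to verify condition (b) of Definition \ref{Defparamlinked}. Given any $q=(s,F)\in\Qor^h_{b,\Fwf}$, for $k$ large enough that $m_k>|s|$, $k\geq|F|$, and $k\geq h(|s|)$, I build $\tilde s_q$ by $\tilde s_q(i):=s(i)$ for $i<|s|$ and $\tilde s_q(i):=\{x(i)\ /\ x\in F\}$ for $i\in[|s|,m_k)$. Non-decreasingness of $h$ combined with $|F|\leq h(|s|)$ (the defining condition for $(s,F)\in\Qor^h_b$) gives $|\tilde s_q(i)|\leq h(i)$ at every $i<m_k$, while the choice of $k$ gives $|\tilde s_q(i)|\leq k$; hence $\tilde s_q\in\Sigma_k$ and plainly $q\in Q_{k,\tilde s_q}$.

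The third step is condition (a). Given $\pi(k)$ conditions $(s_l,F_l)_{l<\pi(k)}\in Q_{k,\tilde s}$, each stem equals $\tilde s\frestr|s_l|$, hence the stems are pairwise comparable under $\subseteq$. Take $s^*:=\tilde s$ viewed as an element of $S_{m_k}(b,h)$, and $F^*:=\bigcup_{l<\pi(k)}F_l$. The hypothesis $k\cdot\pi(k)\leq h(m_k)$ yields $|F^*|\leq \pi(k)\cdot k\leq h(m_k)=h(|s^*|)$, and the very definition of $Q_{k,\tilde s}$ gives $\{x(i)\ /\ x\in F_l\}\subseteq\tilde s(i)=s^*(i)$ for every $l$ and every $i\in[|s_l|,m_k)$; hence $(s^*,F^*)\in\Qor^h_{b,\Fwf}$ and is a common extension of all $(s_l,F_l)$.

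The main delicate point is the double duty played by the slot-size bound: $|\tilde s(i)|$ must be at most $k$ so that $|\Sigma_k|$ fits into the counting bound furnished by the hypothesis, yet it must also be at most $h(i)$ so that $s^*=\tilde s$ is a legitimate member of $S_{m_k}(b,h)$ and so that $(s^*,F^*)$ is a genuine condition. Both constraints hold automatically for the $\tilde s_q$ built from a given $q$, because the stem part inherits $|s(i)|\leq h(i)$ from $s\in S_{<\omega}(b,h)$ and the trace part has size at most $|F|\leq h(|s|)\leq h(i)$ by the standing monotonicity assumption on $h$.
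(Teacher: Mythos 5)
Your proof is correct and follows the same basic strategy as the paper: index the $\pi(k)$-linked pieces by length-$m_k$ slaloms whose slot sizes are controlled by $k$ so the counting hypothesis applies, and define each piece from a slalom $\tilde s$ by requiring the stem and the traces of $F$ to agree with $\tilde s$. The differences are in the bookkeeping, and they make your version a little cleaner. First, you impose the bound $|\tilde s(i)|\leq k$ uniformly on all $i<m_k$ and note that it costs nothing at head positions $i<|s|$ because $|s(i)|\leq h(i)\leq h(|s|)\leq k$; the paper instead imposes the $\leq k$ bound only on a tail $[n_k,m_k)$ and separately counts the head via an auxiliary non-decreasing sequence $\{n_k\}$ chosen so that $|S_{n_k}(b,h)|\leq k$. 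Your uniform bound is actually why your counting uses only $|\Sigma_k|\leq|[b(m_k-1)]^{\leq k}|^{m_k}\leq\rho(k)/k$ and doesn't even need the extra factor of $k$ that the hypothesis offers (and which the paper's split count does use). Second, and more substantively, your pieces $Q_{k,\tilde s}$ admit stems of length $\leq m_k$ and are defined so that the original condition $q$ itself lies in $Q_{k,\tilde s_q}$; this verifies condition (b) of Definition~\ref{Defparamlinked} literally. The paper's pieces require the stem to equal a full-length $s\in S_k$, and its argument for (b) only shows that $q$ can be \emph{extended} into some $Q_{k,s}$ — a slight weakening that is harmless (taking the downward closure of $Q_{k,s}$ preserves $\pi(k)$-linkedness, as any $\pi(k)$ conditions can be pushed into $Q_{k,s}$ and then amalgamated there), but your route avoids even this small detour and matches the definition as stated.
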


\begin{proof}
    Choose $1<M<\omega$ such that, for any $k\geq M$, $k\cdot\pi(k)\leq h(m_k)$ and
   $k\cdot|[b(m_k-1)]^{\leq k}|^{m_k}\leq\rho(k)$. Find a non-decreasing sequence $\{n_k\}_{k<\omega}$ of natural numbers that converges to infinity such that, for all $k\geq M$, $n_k\leq k,m_k$ and $|S_{n_k}(b,h)|\leq k$. Let
   $S_k:=\{s\in S_{m_k}(b,h)\ /\ \forall_{i\in[n_k,m_k)}(|s(i)|\leq k)\}$ when $k\geq M$. Note that
   \[|S_k|\leq|S_{n_k}(b,h)|\cdot|\prod_{i\in[n_k,m_k)}[b(i)]^{\leq k}|\leq k\cdot|[b(m_k-1)]^{\leq k}|^{m_k}\leq\rho(k).\]
   For each $k\geq M$ and $s\in S_k$, put $Q_{k,s}:=\{(t,F)\in\Qor^h_{b,\Fwf}\ /\ t=s\textrm{\ and\ }|F|\cdot\pi(k)\leq h(m_k)\}$. It is clear that $Q_{k,s}$ is $\pi(k)$-linked for all $s\in S_k$. To conclude that $\Qor^h_{b,\Fwf}$ is $\langle\pi,\rho\rangle$-linked, we show that, given $(t,F)\in\Qor^h_{b,\Fwf}$, for all but finitely many $k$ we can extend $(t,F)$ to some condition in $Q_{k,s}$ for some $s\in S_k$. Choose $N<\omega$ such that $M,|F|\leq N$ and $|t|\leq n_N$. Extend $(t,F)$ to $(t',F)\in\Qor^h_{b,\Fwf}$ such that $|t'|=n_N$. Now, for all $k\geq N$, we can extend $(t',F)$ to $(s,F)\in\Qor^h_{b,\Fwf}$ such that $s\in S_k$ because $|F|\leq k$. For the same reason, we get $|F|\cdot\pi(k)\leq k\cdot\pi(k)\leq h(m_k)$ and, thus, $(s,F)\in Q_{k,s}$.
\end{proof}

We introduce the following property for preserving $\add(\Nwf)$ small in forcing extensions. This is a generalization of \cite[Def. 3.3]{jushe} that is useful for posets that satisfy some linkedness as presented in Definition \ref{Defparamlinked}.

\begin{definition}\label{DefPresAddN}
   Let $\bar{\Gwf}:=\{g_k\}_{k<\omega}$ be a sequence of functions in $\omega^\omega$ that converge to infinity. Put $S(\omega,\bar{\Gwf}):=\bigcup_{k<\omega}S(\omega,g_k)$ and, for a forcing notion $\Por$, define the following property:
   \begin{description}
     \item[$(+^{<\theta}_{\Por,\bar{\Gwf}})$] For any $k<\omega$ and any $\Por$-name $\dot{\psi}$ of a real in $S(\omega,g_k)$, there exists a nonempty $\Psi\subseteq S(\omega,\bar{\Gwf})$ of size $<\theta$ such that, for every $f\in\omega^\omega$, if $\forall_{\psi'\in\Psi}(f\notin^*\psi')$, then
         $\Vdash f\notin^*\dot{\psi}$.
   \end{description}
   $(+^{1}_{\Por,\bar{\Gwf}})$ denotes $(+^{<2}_{\Por,\bar{\Gwf}})$.

   A family $\Cwf\subseteq\omega^\omega$ is called \emph{$<\theta$-$\in^*$-$\bar{\Gwf}$-strongly unbounded} if, for any $\Psi\subseteq S(\omega,\bar{\Gwf})$ of size $<\theta$, there exists an $f\in\Cwf$ such that
   $f\notin^*\psi$ for any $\psi\in\Psi$.
\end{definition}

Note that, by Theorem \ref{BartcharAdd(N)} for $h=g_0$, if $\Cwf\subseteq\omega^\omega$ is $<\theta$-$\in^*$-$\bar{\Gwf}$-strongly unbounded, then $\add(\Nwf)\leq|\Cwf|$.

The following result is proved like Lemma \ref{Presb(I)paramfsi}. In fact, it is connected with results of \cite[Sect. 3]{jushe} (see also \cite[Subsect. 1.3]{brendle}) and the proofs are the same.

\begin{lemma}\label{PresAddNfsi}
   \begin{enumerate}[(a)]
      \item If $(+^{<\theta}_{\Por,\bar{\Gwf}})$ holds, then $\Por$ preserves $<\theta$-$\in^*$-$\bar{\Gwf}$-strongly unbounded families.
      \item If $\langle\Por_\alpha,\Qnm_\alpha\rangle_{\alpha<\theta}$ is a fsi of non-trivial ccc posets, then it adds a $<\theta$-$\in^*$-$\bar{\Gwf}$-strongly unbounded family of size $\theta$ (of Cohen reals).
      \item $(+^{<\theta}_{\Por,\bar{\Gwf}})$ is preserved by fsi of ccc posets.
   \end{enumerate}
\end{lemma}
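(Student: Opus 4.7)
My plan is to model all three parts on the corresponding arguments from Lemma \ref{Presb(I)paramfsi}, replacing the relation $\backepsilon^{\;\prime}$ with $\notin^*$ and the family $\widetilde{S}(\Iwf,L_n,m,\rho)$ with $S(\omega,\bar{\Gwf})$.

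For (a), I would take $\Cwf\subseteq\omega^\omega$ a $<\theta$-$\in^*$-$\bar{\Gwf}$-strongly unbounded family in the ground model, fix $\nu<\theta$ and a sequence $\{\dot{\psi}_\alpha\}_{\alpha<\nu}$ of $\Por$-names for reals in $S(\omega,\bar{\Gwf})$ (each $\dot{\psi}_\alpha$ lying in $S(\omega,g_{k_\alpha})$ for some $k_\alpha<\omega$). Applying $(+^{<\theta}_{\Por,\bar{\Gwf}})$ to each $\dot{\psi}_\alpha$ yields a witness $\Psi_\alpha\subseteq S(\omega,\bar{\Gwf})$ of size $<\theta$. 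The union $\Psi:=\bigcup_{\alpha<\nu}\Psi_\alpha$ has size $<\theta$ as $\theta$ is regular and uncountable, so by strong unboundedness of $\Cwf$ there exists $f\in\Cwf$ with $f\notin^*\psi'$ for every $\psi'\in\Psi$. Then $\Por\Vdash f\notin^*\dot{\psi}_\alpha$ for all $\alpha<\nu$, showing that $\Cwf$ remains $<\theta$-$\in^*$-$\bar{\Gwf}$-strongly unbounded.

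For (b), I would exploit that a fsi of non-trivial ccc posets adds Cohen reals at each limit stage. The point is that if $c\in\omega^\omega$ is a Cohen real over a model $M$ and $\psi\in S(\omega,\bar{\Gwf})\cap M$, then by a density argument (for each $n<\omega$ the set of conditions $s$ with $|s|>n$ and $s(|s|-1)\notin\psi(|s|-1)$ is dense) one gets $c\notin^*\psi$. Letting $\dot{c}_\alpha$ be the Cohen real added at stage $\alpha$, I would show that $\{\dot{c}_\alpha\ /\ \alpha<\theta\text{ limit}\}$ is forced by $\Por_\theta$ to be $<\theta$-$\in^*$-$\bar{\Gwf}$-strongly unbounded: given any $\Psi\subseteq S(\omega,\bar{\Gwf})$ of size $<\theta$ in the final extension, ccc and the regularity of $\theta$ imply that $\Psi$ already lies in some intermediate $V[\Por_\alpha]$, and then $\dot{c}_\beta$ for any limit $\beta>\alpha$ avoids every $\psi\in\Psi$.

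For (c), the proof is the standard preservation-along-fsi template (as in \cite[Thm. 6.4.12.2]{barju} and \cite{brendle}). I would verify the property is preserved at successor stages (using the composition of the hypothesis at $\Por_\alpha$ and at $\Qnm_\alpha$: given a $\Por_{\alpha+1}$-name $\dot{\psi}$, reinterpret it as a $\Por_\alpha$-name $\dot{\tau}$ for a $\Qnm_\alpha$-name, apply $(+^{<\theta}_{\Qnm_\alpha,\bar{\Gwf}})$ in $V^{\Por_\alpha}$ to get a name $\dot{\Psi}_\alpha$ of size $<\theta$, then apply $(+^{<\theta}_{\Por_\alpha,\bar{\Gwf}})$ repeatedly to each element of $\dot{\Psi}_\alpha$ seen as a $\Por_\alpha$-name), and at limit stages of cofinality $<\theta$ and $\geq\theta$ separately, in each case using ccc together with the regularity of $\theta$ to reduce any $\Por_\lambda$-name for an element of $S(\omega,\bar{\Gwf})$ to a name in some initial segment. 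The main obstacle is the bookkeeping at limits of cofinality $<\theta$, where one must build the witness $\Psi$ by taking unions along a cofinal sequence and check that the resulting set of size $<\theta$ still captures $\notin^*$ against $\dot{\psi}$; but this is entirely parallel to the standard argument and relies only on the $F_\sigma$ nature of the relation $\notin^*$ together with ccc.
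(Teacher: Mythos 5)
Your proposal is correct and follows exactly the route the paper itself takes: the paper's proof of Lemma~\ref{PresAddNfsi} is deferred verbatim to the proof of Lemma~\ref{Presb(I)paramfsi} (with the references to \cite{jushe} and \cite{brendle}), and you reproduce the three analogous steps with the relation $\backepsilon'$ and the families $\widetilde{S}(\Iwf,L_n,m,\rho)$ replaced by $\notin^*$ and $S(\omega,\bar{\Gwf})$. Two very small clarifications worth noting: in (a) one should present the prospective bad family in the extension as a union over $k<\omega$ of $<\theta$-many $\Por$-names each forced into a fixed $S(\omega,g_k)$ (rather than letting $k_\alpha$ be undetermined), exactly as in Lemma~\ref{Presb(I)paramfsi}(b) where the names are indexed by $(n,m,\alpha)$; and in (b) the Cohen reals needed are those extracted at limit stages of countable cofinality, of which there are still $\theta$-many by regularity of $\theta$, so the argument goes through.
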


We conclude this section by presenting some conditions saying when a poset satisfies the property of Definition \ref{DefPresAddN}.

\begin{lemma}\label{PresAddNcentered}
   Let $\nu<\theta$ be an infinite cardinal. If $\Por$ is $\nu$-centered, then $(+^{<\theta}_{\Por,\bar{\Gwf}})$ holds.
\end{lemma}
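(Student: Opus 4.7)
My plan is to mimic the argument of Lemma \ref{Presb(I)paramcentered}, but with the appropriate ``capture" slalom for the $\in^*$ relation rather than for the $\supseteq^\infty$ relation. Write $\Por=\bigcup_{\alpha<\nu}P_\alpha$ where each $P_\alpha$ is centered, fix $k<\omega$, and let $\dot\psi$ be a $\Por$-name for a member of $S(\omega,g_k)$, so in particular $\Vdash|\dot\psi(i)|\leq g_k(i)$ for every $i$. For each $\alpha<\nu$ and each $i<\omega$, I define
\[
\psi_\alpha(i):=\{\,n\in\omega\ /\ \exists\, q\in P_\alpha\ (q\Vdash n\in\dot\psi(i))\,\}.
\]
The candidate witness will be $\Psi:=\{\psi_\alpha\ /\ \alpha<\nu\}$, which has cardinality $\leq\nu<\theta$.

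The first step is to check that each $\psi_\alpha$ lies in $S(\omega,g_k)\subseteq S(\omega,\bar\Gwf)$, i.e.\ that $|\psi_\alpha(i)|\leq g_k(i)$. This is where centeredness of $P_\alpha$ is used crucially: given any finitely many $n_1,\dots,n_m\in\psi_\alpha(i)$ with corresponding witnesses $q_1,\dots,q_m\in P_\alpha$, a common lower bound $q\in\Por$ forces $\{n_1,\dots,n_m\}\subseteq\dot\psi(i)$, and since $q\Vdash|\dot\psi(i)|\leq g_k(i)$, we must have $m\leq g_k(i)$. Taking the supremum over such finite subsets yields $|\psi_\alpha(i)|\leq g_k(i)$.

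The second step is to verify that $\Psi$ witnesses $(+^{<\theta}_{\Por,\bar\Gwf})$ for $\dot\psi$. I argue by contrapositive: suppose some $p\in\Por$ forces $f\in^*\dot\psi$, so there is $N<\omega$ and (by strengthening if necessary) $p\Vdash \forall_{i\geq N}(f(i)\in\dot\psi(i))$. Pick $\alpha<\nu$ with $p\in P_\alpha$. Then for every $i\geq N$, the condition $p\in P_\alpha$ forces $f(i)\in\dot\psi(i)$, so by definition $f(i)\in\psi_\alpha(i)$. Hence $f\in^*\psi_\alpha$, contradicting the assumption $\forall_{\psi'\in\Psi}(f\notin^*\psi')$. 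This completes the proof.

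I do not expect any serious obstacle: the only subtlety is getting the size estimate $|\psi_\alpha(i)|\leq g_k(i)$, which follows directly from centeredness combined with the fact that $\dot\psi(i)$ is forced to have size $\leq g_k(i)$. The contrapositive verification is then automatic from the definition of $\psi_\alpha$.
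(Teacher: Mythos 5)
Your proof is correct and is exactly the standard ``collect all candidate values per centered piece'' argument that the paper invokes by citing Judah--Shelah and Brendle, mirroring the proof of Lemma~\ref{Presb(I)paramcentered}: centeredness bounds $|\psi_\alpha(i)|$ by $g_k(i)$, and the contrapositive step is immediate from the definition of $\psi_\alpha$.
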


\begin{proof}
    The proof is like in \cite[Thm. 3.6]{jushe} and \cite[Lemma 6]{brendle}.
\end{proof}

\begin{lemma}\label{PresAddNlinked}
   Let $\pi,\rho\in\omega^\omega$ be such that $\lim_{k\to+\infty}\pi(k)=+\infty$ and assume $g\in\omega^\omega$ converges to infinity. Then, there is a $\leq^*$-increasing definable\footnote{That is, a continuous function $(\pi,\rho,g)\mapsto\bar{\mathcal{G}}$ (with Borel domain) can be constructed.} sequence $\bar{\Gwf}=\{g_k\}_{k<\omega}$ with $g_0=g$ and such that $(+^1_{\Qor,\bar{\Gwf}})$ holds for any $\langle\pi,\rho\rangle$-linked poset $\Qor$.
\end{lemma}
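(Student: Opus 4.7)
The plan is to build $\bar{\Gwf}$ recursively so that $g_{k+1}$ accommodates a single ground-model slalom dominating any $\Qor$-name for a slalom in $S(\omega,g_k)$, uniformly over $\langle\pi,\rho\rangle$-linked posets $\Qor$. Concretely, I will set $g_0:=g$ and, given $g_k$, define $i_k(n):=\min\{i<\omega\ /\ \pi(i)>g_k(n)\}$ (well-defined since $\pi\to\infty$) and $g_{k+1}(n):=\max\{1,\rho(i_k(n))\}\cdot g_k(n)$. Then $g_k\leq g_{k+1}$ pointwise (so the sequence is $\leq^*$-increasing), each $g_k$ inherits convergence to infinity from $g_0=g$, and every value $g_k(n)$ depends on only finitely many coordinates of $(\pi,\rho,g)$, giving the required continuous definability.

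Next, fix a $\langle\pi,\rho\rangle$-linked $\Qor$ witnessed by $\langle Q_{i,j}\rangle_{i<\omega,j<\rho(i)}$, some $k<\omega$, and a $\Qor$-name $\dot\psi$ for a slalom in $S(\omega,g_k)$. For each $n$ and $j<\rho(i_k(n))$ I will set $B_{n,j}:=\{m\in\omega\ /\ \exists p\in Q_{i_k(n),j}\,(p\Vdash m\in\dot\psi(n))\}$ and define the candidate cover by $\psi'(n):=\bigcup_{j<\rho(i_k(n))}B_{n,j}$. The crucial bound is $|B_{n,j}|\leq g_k(n)$: otherwise, pick distinct $m_0,\ldots,m_{g_k(n)}$ in $B_{n,j}$ with forcing witnesses $p_0,\ldots,p_{g_k(n)}\in Q_{i_k(n),j}$. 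Since $g_k(n)+1\leq\pi(i_k(n))$ by the choice of $i_k(n)$, the $\pi(i_k(n))$-linkedness of $Q_{i_k(n),j}$ supplies a common extension $q\in\Qor$ forcing $\{m_0,\ldots,m_{g_k(n)}\}\subseteq\dot\psi(n)$, contradicting $|\dot\psi(n)|\leq g_k(n)$. Summing over $j$, $|\psi'(n)|\leq\rho(i_k(n))\cdot g_k(n)\leq g_{k+1}(n)$, so $\psi'\in S(\omega,g_{k+1})\subseteq S(\omega,\bar{\Gwf})$.

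To finish, I will verify $(+^1_{\Qor,\bar{\Gwf}})$ with $\Psi:=\{\psi'\}$ by arguing the contrapositive: if $p\in\Qor$ forces $f\in^*\dot\psi$, then $f\in^*\psi'$. Strengthening $p$ if necessary, I may assume $p\Vdash \forall n\geq N\,(f(n)\in\dot\psi(n))$ for some concrete $N$. Clause (b) of Definition~\ref{Defparamlinked} supplies $M$ such that for every $i\geq M$ there is $j(i)<\rho(i)$ with $p\in Q_{i,j(i)}$; and since $g_k\to\infty$ forces $i_k(n)\to\infty$, we have $i_k(n)\geq M$ for all but finitely many $n$. Setting $j_n:=j(i_k(n))$, for any such $n\geq N$ the fact that $p\Vdash f(n)\in\dot\psi(n)$ together with $p\in Q_{i_k(n),j_n}$ yields $f(n)\in B_{n,j_n}\subseteq\psi'(n)$, so $f\in^*\psi'$, as desired.

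The main obstacle—and really the only quantitative point—is securing the bound $|B_{n,j}|\leq g_k(n)$, which hinges precisely on the choice making $\pi(i_k(n))>g_k(n)$: that gap is exactly what lets the $\pi(i_k(n))$-linkedness of $Q_{i_k(n),j}$ collide with the size cap on $\dot\psi(n)$. Everything else is routine bookkeeping once this quantitative step is arranged.
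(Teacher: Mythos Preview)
Your proof is correct and follows essentially the same strategy as the paper's: the paper isolates the inductive step as a separate claim (given $g$, produce $g'$ with $g'(k)\geq g(k)\cdot\rho(m_k)$ where $m_k$ is chosen so that $g(k)<\pi(m_k)$), defines the same sets $z_{k,j}=\{l\ /\ \exists_{q\in Q_{m_k,j}}(q\Vdash l\in\dot\psi(k))\}$ as your $B_{n,j}$, and bounds them by the identical linkedness argument. The only cosmetic difference is that the paper proves the final implication directly (from $f\notin^*\psi'$ deduce density of conditions forcing $f(k)\notin\dot\psi(k)$ for large $k$) whereas you argue the contrapositive; both are routine once the quantitative bound $|B_{n,j}|\leq g_k(n)$ is in place.
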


\begin{proof}
  This is a direct consequence of the following fact.
  \begin{claim}\label{NewfuncPresAddNlinked}
   Let $\{m_k\}_{k<\omega}$ be a strictly increasing sequence of natural numbers such that $g(k)<\pi(m_k)<\pi(m_{k+1})$ and let $g'\in\omega^\omega$ such that $g'(k)\geq g(k)\cdot\rho(m_k)$ for all but finitely many $k<\omega$. Then, if $\Qor$ is $\langle\pi,\rho\rangle$-linked and $\dot{\psi}$ is a $\Qor$-name for a real in $S(\omega,g)$, there exists a $\psi'\in S(\omega,g')$ such that, for any $f\in\omega^\omega$ such that $f\notin^*\psi'$, $\Vdash f\notin^*\dot{\psi}$.
  \end{claim}
  \begin{proof}
     Let $\langle Q_{k,j}\rangle_{k<\omega,j<\rho(k)}$ be a witness of the linkedness of $\Qor$. For any $k<\omega$ and $j<\rho(m_k)$, put $z_{k,j}:=\{l<\omega\ /\ \exists_{q\in Q_{m_k,j}}(q\Vdash l\in\dot{\psi}(k))\}$. As $Q_{m_k,j}$ is $\pi(m_k)$-linked and $g(k)<\pi(m_k)$, by linkedness it is clear that $|z_{k,j}|\leq g(k)$. Put $\psi'(k):=\bigcup_{j<\rho(m_k)}z_{k,j}$, so it is clear that $\psi'\in S(\omega,g')$. Let $f\in\omega^\omega$ such that $\exists^\infty_{k<\omega}(f(k)\notin\psi'(k))$ and we show that $\Vdash\exists^\infty_{k<\omega}(f(k)\notin\dot{\psi}(k))$. For $p\in\Qor$ and $k_0<\omega$, find $k>k_0$ and $j<m_k$ such that $f(k)\notin\psi'(k)$ and $p\in Q_{m_k,j}$. In particular, $f(k)\notin z_{k,j}$. By definition of $z_{k,j}$, $p\not\Vdash f(k)\in\dot{\psi}(k)$, so there is a $q\leq p$ such that $q\Vdash f(k)\notin\dot{\psi}(k)$.
  \end{proof}
\end{proof}

%
%

\section{Consistency results}\label{SecConsFrag}

In this section, we prove all our main consistency results for fragmented ideals (Theorem B). The first result says that it is consistent that the Rothberger numbers for all somewhere tall fragmented ideals are strictly less than $\bfrak$.

\begin{theorem}\label{allb(I)belowb}
  Let $\mu\leq\kappa$ be regular uncountable cardinals and $\lambda$ a cardinal such that $\lambda^{<\kappa}=\lambda$. Then, there exists a ccc poset that forces $\bfrak(\Iwf)\leq\mu$ for any somewhere tall fragmented ideal $\Iwf$, $\add(\Nwf)=\mu$, $\bfrak=\kappa$ and $\cfrak=\lambda$. In particular, this poset forces $\bfrak(\Iwf)=\add(\Nwf)=\mu$ for any somewhere tall gradually fragmented ideal $\Iwf$.
\end{theorem}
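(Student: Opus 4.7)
The plan is to construct a finite support iteration $\langle\Por_\alpha,\Qnm_\alpha\rangle_{\alpha<\lambda}$ of ccc posets in which the parameters of Section~\ref{SecPresProp} can be controlled simultaneously. Using $\lambda^{<\kappa}=\lambda$, a book-keeping function will enumerate, along the iteration, all $\Por_\alpha$-names (for $\alpha<\lambda$) of triples $(\dot\Iwf,\dot{\bar L},\dot\rho)$, where $\dot\Iwf$ names a somewhere tall fragmented ideal, $\dot X$ an $\dot\Iwf$-positive set on which the restriction is tall, $\dot{\bar L}$ a partition of $\omega$ compatible with the fragmentation of $\dot\Iwf\frestr \dot X$, and $\dot\rho$ a parameter function with $\lim_i\varphi_i(a_i)/\rho(i)=\infty$. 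At each stage, $\Qnm_\alpha$ will be chosen from a fixed list of iterand types: (a)~Hechler forcing $\Dor$; (b)~a localization poset $\Loc^{\dot h}$; and (c)~a poset $\Dor\ast\Qor^{\dot h}_b$ as provided by Theorem~\ref{DestroyGradFragGaps}, attached to the current restriction of the book-kept ideal. The placement will be arranged so that Hechler-type iterands appear along a $\kappa$-cofinal subsequence, localization iterands cofinally below $\mu$, and every book-kept name is served.

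The verification will combine the two preservation mechanisms of Section~\ref{SecPresProp}. Cohen reals appearing at limits give $\cov(\Mwf)\geq\lambda$, hence $\cfrak=\lambda$; the Hechler subsequence forces $\bfrak\geq\kappa$, with $\bfrak\leq\kappa$ witnessed via the standard book-kept scale. To keep $\add(\Nwf)=\mu$ I will apply Lemma~\ref{PresAddNfsi}: the localization iterands yield $\add(\Nwf)\geq\mu$, while $\add(\Nwf)\leq\mu$ follows from preserving a $<\mu$-$\in^*$-$\bar\Gwf$-strongly unbounded family of size $\mu$. This requires each iterand to satisfy $(+^{<\mu}_{\Qnm_\alpha,\bar\Gwf})$ for an appropriate sequence $\bar\Gwf$; this is handed by Lemma~\ref{PresAddNcentered} for the $\sigma$-centered $\Dor$, and by Lemma~\ref{PresAddNlinked} combined with the $\langle\pi,\rho\rangle$-linkedness of $\Loc^{\dot h}$ and $\Qor^{\dot h}_b$ established through Lemma~\ref{LinkednessLocposet}.

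The heart of the argument is the upper bound on $\bfrak(\Iwf)$. Given any somewhere tall fragmented $\Iwf$ in the final extension, fix an $\Iwf$-positive $X$ with $\Iwf\frestr X$ tall, so that $\bfrak(\Iwf)\leq\bfrak(\Iwf\frestr X)$ by Remark~\ref{RemGaps}(2). Choose $\bar L$ and $\rho$ as above and apply Theorem~\ref{b(I)leqparam} to obtain $\bfrak(\Iwf\frestr X)\leq\bfrak^\rho(\Iwf\frestr X,\bar L)$. By Lemma~\ref{Presb(I)paramfsi}(c) the iteration adds a $<\mu$-$\rho$-strong covering family of size $\mu$ (the initial $\mu$ limit Cohens), and each iterand will preserve it: Lemma~\ref{Presb(I)paramcentered} covers $\Dor$, and Lemma~\ref{Presb(I)param linked} together with Lemma~\ref{LinkednessLocposet} covers $\Loc^{\dot h}$ and $\Qor^{\dot h}_b$. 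Hence $\bfrak^\rho(\Iwf\frestr X,\bar L)\leq\mu$ in the final model, and $\bfrak(\Iwf)\leq\mu$ follows. The ``in particular'' clause combines this with $\add(\Nwf)\leq\bfrak(\Iwf)$ from Corollary~\ref{b(gradfrag)aboveAddN}.

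The main obstacle I expect is the coordination of parameters. The $\langle\pi,\rho\rangle$-linkedness constants of the $\Qor^{\dot h}_b$-iterands depend on the ideal being served, yet the same iterand must simultaneously preserve all the covering families attached to the uncountably many other book-kept ideals, and must also preserve the $<\mu$-$\in^*$-$\bar\Gwf$-strongly unbounded family controlling $\add(\Nwf)$. The delicate point is choosing $\pi,\rho,\bar\Gwf$ and the book-keeping pattern so that every iterand in the list lands uniformly in the hypotheses of Lemmas~\ref{Presb(I)paramcentered}--\ref{Presb(I)param linked} and \ref{PresAddNcentered}--\ref{PresAddNlinked}; once this alignment is set, the fsi preservation theorems quoted in Lemmas~\ref{Presb(I)paramfsi}(d) and \ref{PresAddNfsi}(c) close the argument.
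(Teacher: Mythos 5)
Your plan imports machinery that the theorem does not need, and this is where it breaks. You propose including iterands of type $\Dor\ast\Qor^{\dot h}_b$ (from Theorem~\ref{DestroyGradFragGaps}) and book-keeping named ideals, then worry in your last paragraph about how to coordinate the $\langle\pi,\rho\rangle$-linkedness data of these iterands with the various strong covering families and the $\bar\Gwf$-family simultaneously. You flag this coordination as "the delicate point" but do not resolve it. As stated, the proposal therefore has a genuine gap: without exhibiting a uniform choice of $\pi,\rho,\bar\Gwf$ that makes \emph{every} $\Qor^{\dot h}_b$ iterand simultaneously $\langle\pi,\rho\rangle$-linked for all book-kept triples, the preservation claims in the third paragraph are unjustified. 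Such a uniform choice is not automatic and is precisely the technical burden that the paper's later Theorem~\ref{Consdiffb(I)} has to work hard to discharge via Lemma~\ref{operR^rho} and the function scale $\R^\rho$.

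The paper avoids all of this. Its iteration uses only Cohen forcing, subalgebras of $\Loc^{id_\omega}$ of size $<\mu$, and subalgebras of $\Dor$ of size $<\kappa$ — no $\Qor^{\dot h}_b$ iterands at all. It then fixes $\rho=1$ once and for all; with $\rho=1$, every iterand in the list is $<\mu$-centered, so Lemma~\ref{Presb(I)paramcentered} alone gives $(+^{<\mu}_{\Qnm_\alpha,\bar L,\Iwf,1})$ for every iterand and every tall fragmented $\Iwf$ in the extension, with no $\langle\pi,\rho\rangle$-linkedness bookkeeping whatsoever. Moreover, there is no need to book-keep the ideals: since $\Iwf$ is coded by a real, it appears at some stage $\alpha<\lambda$; then Lemma~\ref{Presb(I)paramfsi}(c) produces a $<\mu$-$1$-strong covering family of size $\mu$ at stage $\beta=\alpha+\mu$, and since every iterand of the tail $\Por_{[\beta,\lambda)}$ is $<\mu$-centered, that family is preserved to the end (Lemma~\ref{Presb(I)paramfsi}(b),(d)). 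Theorem~\ref{b(I)leqparam} with $\rho=1$ then gives $\bfrak(\Iwf)\leq\mu$. Finally, $\add(\Nwf)=\mu$, $\bfrak=\kappa$, $\cfrak=\lambda$ are obtained directly from standard fsi arguments (\cite{brendle}, \cite{mejia}); the $(+^{<\theta}_{\cdot,\bar\Gwf})$ machinery you invoke via Lemmas~\ref{PresAddNlinked} and \ref{LinkednessLocposet} is only required in the later theorems where $\Qor^{\dot h}_b$ iterands genuinely appear. The moral: for this theorem the goal is only $\bfrak(\Iwf)\leq\mu$, not strict inequalities among these cardinals, so the $\Qor^{\dot h}_b$ iterands (whose role is to push $\bfrak(\Iwf)$ above $\add(\Nwf)$) are out of place and only create the parameter-coordination problem you then leave open.
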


\begin{proof}
   We perform a fsi $\Por_\lambda=\langle\Por_\alpha,\Qnm_\alpha\rangle_{\alpha<\lambda}$ alternating between Cohen forcing $\Cor$, subalgebras of $\Loc^{id_\omega}$ of size $<\mu$ and subalgebras of $\Dor$ of size $<\kappa$ and, by a book-keeping argument, we make sure that all those subalgebras of the extension are used in the iteration (this is possible because $\lambda^{<\kappa}=\lambda$). By known techniques from \cite{brendle} (see also \cite[Sect.3]{mejia}), $\Por_\lambda$ forces $\add(\Nwf)=\mu$, $\bfrak=\kappa$ and $\cfrak=\lambda$.

   In $V$, fix $\bar{L}=\{L_n\}_{n<\omega}$ a partition of $\omega$ into infinite sets. Now, in $V_\lambda$, let $\Iwf=\Iwf\langle a_i,\varphi_i\rangle_{i<\omega}$ be a somewhere tall fragmented ideal and, by Remark \ref{RemGaps}(2), without loss of generality, assume that it is tall and $\lim_{n\to+\infty}\varphi_i(a_i)=+\infty$. As $\Iwf$ is represented by a real number, there exists $\alpha<\lambda$ such that $\langle a_i,\varphi_i\rangle_{i<\omega}\in V_\alpha$. By Lemma \ref{Presb(I)paramfsi}(c), there is a $<\mu$-$1$-strong covering family $\Psi''$ of size $\mu$ in $V_{\beta}$ where $\beta:=\alpha+\mu$ (ordinal sum). By Lemmas \ref{Presb(I)paramcentered} and \ref{Presb(I)paramfsi}, $\Por_{[\beta,\lambda)}=\Por_\lambda/\Por_\beta$ (the remaining part of the iteration from $\beta$) satisfies $(+^{<\mu}_{\cdot,\bar{L},\Iwf,1})$, so this $<\mu$-$1$-strong covering family $\Psi''$ is preserved in $V_\lambda$. By Theorem \ref{b(I)leqparam}, $\bfrak(\Iwf)\leq\bfrak^1(\Iwf,\bar{L})\leq|\Psi''|=\mu$.

   The last statement follows from Corollary \ref{b(gradfrag)aboveAddN}.
\end{proof}

The following shows that, for an ideal as in Example \ref{ExpFragId}(2), we can find a poset that puts its Rothberger number strictly between $\add(\Nwf)$ and $\bfrak$. In particular, this holds for the polynomial growth ideal $\Iwf_P$ as well.

\begin{theorem}\label{add(N)belowsomeb(I)}
   Let $\mu\leq\nu\leq\kappa$ be uncountable regular cardinals, $\lambda$ a cardinal such that $\lambda^{<\kappa}=\lambda$. Let $\Iwf=\Iwf_c(P)$ be a gradually fragmented ideal as in Example \ref{ExpFragId}(2) and assume it is non-trivial. Then, there exists a ccc poset that forces $\add(\Nwf)=\mu$, $\bfrak(\Iwf)=\nu$, $\bfrak=\kappa$ and $\cfrak=\lambda$.
\end{theorem}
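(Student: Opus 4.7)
The plan is to adapt the three-way alternating iteration used in Theorem~\ref{allb(I)belowb} so that it also controls $\blocfrak(b,h)$ for the localization parameters attached to $\Iwf=\Iwf_c(P)$. In $V$, fix a partition $\bar{L}=\{L_n\}_{n<\omega}$ of $\omega$ into infinite sets. Following Lemma~\ref{b(I_c(P))aboveLoc}, set $b(i)=2^{|a_i|}$ (so $\R_b\cong\prod_i\Pwf(a_i)$) and pick a non-decreasing $h\in\omega^\omega$ converging to infinity with $h\leq^* c$; pick also $\rho\in\omega^\omega$ positive with $\varphi_i(a_i)/\rho(i)\to+\infty$, which is possible since $\varphi_i(a_i)=\log_{c(i)}(|a_i|+1)$ is unbounded when $\Iwf_c(P)$ is non-trivial. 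Perform a fsi $\Por_\lambda=\langle\Por_\alpha,\Qnm_\alpha\rangle_{\alpha<\lambda}$ of ccc forcings, alternating (with a standard bookkeeping using $\lambda^{<\kappa}=\lambda$) between iterands of three types: \textbf{(i)} complete subalgebras of $\Loc^{id_\omega}$ of size $<\mu$; \textbf{(ii)} complete subalgebras of $\Qor^{h}_{b}$ of size $<\nu$; and \textbf{(iii)} complete subalgebras of $\Dor$ of size $<\kappa$.

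The lower bounds follow standard routines for such constructions. Type~(iii) iterands cofinally dominate any $<\kappa$-sized family, giving $\bfrak\geq\kappa$; type~(i) iterands localize any $<\mu$-sized family, giving $\add(\Nwf)\geq\mu$ by Theorem~\ref{BartcharAdd(N)}; and type~(ii) iterands witness $\blocfrak(b,h)\geq\nu$, so Lemma~\ref{b(I_c(P))aboveLoc} delivers $\bfrak(\Iwf)\geq\min\{\blocfrak(b,h),\bfrak\}\geq\nu$. Also $\cfrak\leq\lambda$ from $|\Por_\lambda|\leq\lambda$.

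The upper bounds rely on the preservation machinery of Section~\ref{SecPresProp}. By Lemma~\ref{Presb(I)paramfsi}(c) the iteration adds a $<\nu$-$\rho$-strong covering family of size $\nu$ consisting of Cohen reals injected at limit stages of cofinality $\nu$; if it survives to $V_\lambda$, then $\bfrak^{\rho^{id_\omega}}(\Iwf,\bar{L})\leq\nu$, and Theorem~\ref{b(I)leqparam} yields $\bfrak(\Iwf)\leq\nu$. Survival requires $(+^{<\nu}_{\Qnm_\alpha,\bar{L},\Iwf,\rho})$ at every stage: type~(iii) iterands are $\sigma$-centered so Lemma~\ref{Presb(I)paramcentered} applies; for types~(i) and~(ii), we select $h$, $\pi$ small enough that Lemma~\ref{LinkednessLocposet} (applied to $\Qor^{h}_{b}$, and its straightforward analogue for $\Loc^{id_\omega}$) gives $\langle\pi,\rho\rangle$-linkedness, and then Lemma~\ref{Presb(I)param linked} gives $(+^1_{\cdot,\bar{L},\Iwf,\rho})$. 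The preservation propagates through the fsi by Lemma~\ref{Presb(I)paramfsi}(d). Dually, $\add(\Nwf)\leq\mu$ is obtained by preserving a $<\mu$-$\in^*$-$\bar{\Gwf}$-strongly unbounded family of Cohen reals (Lemma~\ref{PresAddNfsi}(b),(c)): Lemma~\ref{PresAddNcentered} covers type~(iii) and Lemma~\ref{PresAddNlinked} extracts $(+^1_{\cdot,\bar{\Gwf}})$ from the $\langle\pi,\rho\rangle$-linkedness for types~(i) and~(ii), where $\bar{\Gwf}$ is computed once in $V$ from $(\pi,\rho,g_0)$. Finally $\bfrak\leq\kappa$ follows because all iterands are $\sigma$-linked (the type~(iii) ones even $\sigma$-centered), so the $\kappa$-sized unbounded family of Cohen reals added at cofinality-$\kappa$ stages is preserved.

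The main obstacle I anticipate is the simultaneous calibration of $h$, $\pi$, $\rho$, and $\bar{\Gwf}$: the same choice of $h$ and $\rho$ must support $\langle\pi,\rho\rangle$-linkedness of $\Qor^{h}_{b}$ strong enough for \emph{both} Lemma~\ref{Presb(I)param linked} (to preserve $\bfrak^{\rho^{id_\omega}}(\Iwf,\bar L)$) and Lemma~\ref{PresAddNlinked} (to preserve $\add(\Nwf)$), while $\Qor^{h}_{b}$ must still generically produce slaloms in $S(b,h)$ dominating ground model elements of $\R_b$ (for $\blocfrak(b,h)\geq\nu$) and $\varphi_i(a_i)/\rho(i)\to+\infty$ must hold for Theorem~\ref{b(I)leqparam}. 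Because $b(i)=2^{|a_i|}$ is large, the condition $k\cdot|[b(m_k-1)]^{\leq k}|^{m_k}\leq\rho(k)$ from Lemma~\ref{LinkednessLocposet} drives $\rho$ upward, yet the logarithmic submeasures of Example~\ref{ExpFragId}(2) leave enough room to choose $h$ growing very slowly to infinity and a matching $\rho$ compatible with all constraints; this is precisely why the argument specialises to $\Iwf_c(P)$ rather than arbitrary gradually fragmented ideals.
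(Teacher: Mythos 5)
Your overall architecture matches the paper's: a finite support iteration alternating between small subalgebras of $\Loc^{id_\omega}$ (for $\add(\Nwf)=\mu$), suitable localization posets attached to $\Iwf$ (for $\bfrak(\Iwf)=\nu$), and small subalgebras of $\Dor$ (for $\bfrak=\kappa$), together with the preservation machinery of Section~\ref{SecPresProp}. However, your route to the upper bound $\bfrak(\Iwf)\leq\nu$ is both more complicated than the paper's and contains a flaw.

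The paper observes that \emph{every} iterand in its iteration is $<\nu$-centered: the $\Loc^{id_\omega}$ and $\Qor^{h}_{b,\Fwf}$ iterands because they have cardinality $<\mu\leq\nu$ resp.\ $<\nu$ (any poset of size $<\nu$ is trivially $<\nu$-centered), and Cohen forcing and the $\Dor$ subalgebras because they are $\sigma$-centered. Hence Lemma~\ref{Presb(I)paramcentered} alone gives $(+^{<\nu}_{\Qnm_\alpha,\bar{L},\Iwf,1})$ at every stage, so one may work with $\rho=1$ throughout, and Theorem~\ref{b(I)leqparam} (whose hypothesis with $\rho=1$ is just non-triviality of $\Iwf$) yields $\bfrak(\Iwf)\leq\bfrak^1(\Iwf,\bar{L})\leq\nu$. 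You instead try to establish the stage-wise preservation via $\langle\pi,\rho\rangle$-linkedness and Lemma~\ref{Presb(I)param linked}, and here you invoke a ``straightforward analogue of Lemma~\ref{LinkednessLocposet} for $\Loc^{id_\omega}$.'' This analogue does not exist: $\Loc^{id_\omega}$ has infinitely many stems of each length (since $[\omega]^{\leq i}$ is infinite), so it cannot be partitioned into finitely many $\pi(i)$-linked pieces at each level, and hence is not $\langle\pi,\rho\rangle$-linked for any nontrivial $\pi$. Moreover, $\langle\pi,\rho\rangle$-linkedness is a property of a poset with its own linked pieces, and it does not automatically pass to arbitrary (complete) subalgebras, so even for your type~(ii) iterands this route is shaky unless you use the posets $\Qor^{h}_{b,\Fwf}$ directly, as the paper does.

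The good news is that your error is inessential: if you simply replace the linkedness argument for types (i) and (ii) by the centeredness argument (Lemma~\ref{Presb(I)paramcentered}), your proof lines up with the paper's and the ``calibration obstacle'' you flag largely disappears. The linkedness machinery (Lemmas~\ref{LinkednessLocposet}, \ref{PresAddNlinked}) is genuinely needed, but only for one thing: showing $(+^{<\mu}_{\Qnm_\alpha,\bar{\Gwf}})$ for the $\Qor^{h}_{b,\Fwf}$ iterands, which can have size $\geq\mu$ when $\mu<\nu$ and are therefore not small enough for the centeredness argument. There the paper simply sets $h=c$, chooses $\pi,\rho$ via Lemma~\ref{LinkednessLocposet}, and then $\bar{\Gwf}$ via Lemma~\ref{PresAddNlinked}; no further coordination with the $\bfrak(\Iwf)$ bound is required because that bound is handled separately via centeredness with $\rho=1$.
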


\begin{proof}
   By Remark \ref{RemGaps}(2), it is enough to assume that $|a_i|\geq c(i)^i$ for every $i<\omega$ (this assumption is only used to prove $\bfrak(\Iwf)\leq\nu$ in the forcing extension defined below). Let $h=c$ and $b\in\omega^\omega$ any function such that $b(i)\geq2^{|a_i|}$ for any $i<\omega$. By Lemma \ref{LinkednessLocposet}, we can find $\pi,\rho\in\omega^\omega$ such that $\pi$ converges to infinity and $\Qor^h_{b,\Fwf}$ is $\langle\pi,\rho\rangle$-linked for any $\Fwf\subseteq\R_b$. Also, by Lemma \ref{PresAddNlinked}, find $\bar{\Gwf}$ such that $(+^1_{\Por,\bar{\Gwf}})$ holds for any $\langle\pi,\rho\rangle$-linked poset $\Por$.

   Perform a fsi $\Por_\lambda=\langle\Por_\alpha,\Qnm_\alpha\rangle_{\alpha<\lambda}$ alternating between Cohen forcing $\Cor$, subalgebras of $\Loc^{id_\omega}$ of size $<\mu$, $\Qor^h_{b,\Fwf}$ with $|\Fwf|<\nu$ and subalgebras of $\Dor$ of size $<\kappa$. By a book-keeping argument, we make sure that all such possible posets of the extension are used in the iteration. From the methods of \cite{brendle} (see also \cite[Sect. 3]{mejia}) it follows that $\add(\Nwf)\geq\mu$, $\bfrak=\kappa$ and $\cfrak=\lambda$ in $V_\lambda$.

   We prove that $\add(\Nwf)\leq\mu$ in $V_\lambda$. By Lemmas \ref{PresAddNcentered} and $\ref{PresAddNfsi}$ a $<\mu$-$\in^*$-$\bar{\Gwf}$-strongly unbounded family of size $\mu$ is added in $V_\mu$ and it is preserved in $V_\lambda$, so $\add(\Nwf)\leq\mu$.

   Now, in $V_\lambda$, $\nu\leq\blocfrak(b,h)$ (this implies $\nu\leq\bfrak(\Iwf)$ by Lemma \ref{b(I_c(P))aboveLoc}). Indeed, let $\Fwf\subseteq\R_b$ of size $<\nu$, so there is some $\alpha<\lambda$ such that $\Fwf\in V_\alpha$. Now, at some point of the remaining part of the iteration, the poset $\Qor^h_{b,\Fwf}$ is used to add a slalom $\psi\in S(b,h)$ that $\in^*$-dominates $\Fwf$.

   Finally, we prove that $\bfrak^1(\Iwf,\bar{L})\leq\nu$ is true in $V_\lambda$ (so $\bfrak(\Iwf)\leq\nu$ by Theorem \ref{b(I)leqparam}). Indeed, by Lemmas \ref{Presb(I)paramcentered} and \ref{Presb(I)paramfsi}, the iteration adds a $<\nu$-$1$-strong covering family of size $\nu$ in $V_\nu$ that is preserved in $V_\lambda$. Therefore, $\bfrak^1(\Iwf,\bar{L})\leq\nu$.
\end{proof}

The following result states that, no matter which (uncountable regular) values one wants to force for $\add(\Nwf)$ and $\bfrak$, it is consistent to find as many as possible gradually fragmented ideals that have pairwise different Rothberger numbers between $\add(\Nwf)$ and $\bfrak$.

\begin{theorem}\label{Consdiffb(I)}
   Let $\mu\leq\kappa$ be uncountable regular cardinals, $\delta\leq\kappa$ an ordinal, $\{\nu_\xi\}_{\xi<\delta}$ a non-decreasing sequence of regular cardinals in $[\mu,\kappa]$ and $\lambda$ a cardinal such that $\lambda^{<\kappa}=\lambda$. Then, there is a sequence $\{\Iwf_\xi\}_{\xi<\delta}$ of tall gradually fragmented ideals in the ground model and a ccc poset that forces $\add(\Nwf)=\mu$, $\bfrak=\kappa$, $\cfrak=\lambda$ and $\bfrak(\Iwf_\xi)=\nu_\xi$ for all $\xi<\delta$.
\end{theorem}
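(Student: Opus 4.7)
The plan is to extend the construction of Theorem \ref{add(N)belowsomeb(I)} to handle $\delta$-many ideals simultaneously. First, I would fix a partition $\bar{L}=\{L_n\}_{n<\omega}$ of $\omega$ into infinite sets and construct by transfinite recursion on $\xi<\delta$ a sequence of tall, non-trivial gradually fragmented ideals $\Iwf_\xi := \Iwf_{c_\xi}(P_\xi)$ in the style of Example \ref{ExpFragId}(2), where $P_\xi=\{a^\xi_i\}_{i<\omega}$ is an interval partition and $c_\xi$ is non-decreasing with $c_\xi\geq 2$, converges to infinity, and $|a^\xi_i|\geq c_\xi(i)^i$. Simultaneously, I would choose $h_\xi := c_\xi$, $b_\xi(i):=2^{|a^\xi_i|}$, and linkedness parameters $(\pi_\xi,\rho_\xi)$ such that, by Lemma \ref{LinkednessLocposet}, $\Qor^{h_\xi}_{b_\xi,\Fwf}$ is $\langle\pi_\xi,\rho_\xi\rangle$-linked for every $\Fwf\subseteq\R_{b_\xi}$. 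The critical condition to arrange along the recursion is that, for every $\xi\neq\xi'$,
\[|[\Pwf(a^\xi_i)]^{\leq\rho_{\xi'}(i)^i}|\leq\pi_{\xi'}(i)\textrm{ for all but finitely many }i,\]
so that, by Lemma \ref{Presb(I)param linked}, $\Qor^{h_{\xi'}}_{b_{\xi'},\Fwf}$ satisfies $(+^1_{\cdot,\bar{L},\Iwf_\xi,1})$ for every admissible $\Fwf$.

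Next, I would build a finite support iteration $\Por_\lambda=\langle\Por_\alpha,\Qnm_\alpha\rangle_{\alpha<\lambda}$ of ccc posets by a bookkeeping argument (enabled by $\lambda^{<\kappa}=\lambda$ and $\delta\leq\kappa$) whose iterands are chosen among: Cohen forcing $\Cor$, subalgebras of $\Loc^{id_\omega}$ of size $<\mu$, subalgebras of $\Dor$ of size $<\kappa$, and, for each $\xi<\delta$, posets of the form $\Qor^{h_\xi}_{b_\xi,\Fwf}$ with $|\Fwf|<\nu_\xi$, arranging that every $\Por_\alpha$-name for such a parameter is caught cofinally in $\lambda$.

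Then I would verify the target values in $V_\lambda$. The equalities $\add(\Nwf)=\mu$, $\bfrak=\kappa$ and $\cfrak=\lambda$ follow exactly as in Theorem \ref{add(N)belowsomeb(I)} from the standard preservation machinery of \cite{brendle,mejia}. Fix $\xi<\delta$. For $\bfrak(\Iwf_\xi)\geq\nu_\xi$: Lemma \ref{b(I_c(P))aboveLoc} together with $\bfrak=\kappa\geq\nu_\xi$ reduces the task to $\blocfrak(b_\xi,h_\xi)\geq\nu_\xi$, which is delivered by bookkeeping, since any $\Fwf\subseteq\R_{b_\xi}\cap V_\lambda$ of size $<\nu_\xi$ lies in some intermediate $V_\alpha$ by ccc-ness and regularity of $\nu_\xi$, and is $\in^*$-bounded at a later stage by a slalom added via $\Qor^{h_\xi}_{b_\xi,\Fwf}$. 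For $\bfrak(\Iwf_\xi)\leq\nu_\xi$: by Theorem \ref{b(I)leqparam} it suffices to bound $\bfrak^1(\Iwf_\xi,\bar{L})$ by $\nu_\xi$. Lemma \ref{Presb(I)paramfsi}(c) produces a $<\nu_\xi$-$1$-strong covering family $\Psi''_\xi$ of size $\nu_\xi$ from the Cohen reals added at limit stages below $\nu_\xi$, and survival of $\Psi''_\xi$ to $V_\lambda$ follows from Lemma \ref{Presb(I)paramfsi}(b,d) once every iterand is shown to satisfy $(+^{<\nu_\xi}_{\cdot,\bar{L},\Iwf_\xi,1})$. Cohen and $\Dor$-subalgebras are $\sigma$-centered, so Lemma \ref{Presb(I)paramcentered} applies; subalgebras of $\Loc^{id_\omega}$ of size $<\mu\leq\nu_\xi$ and of $\Qor^{h_\xi}_{b_\xi}$ of size $<\nu_\xi$ are $<\nu_\xi$-centered by their cardinality, so Lemma \ref{Presb(I)paramcentered} applies again; finally, for $\xi'\neq\xi$, the linkedness condition arranged in the first paragraph yields $(+^1_{\cdot,\bar{L},\Iwf_\xi,1})$ through Lemma \ref{Presb(I)param linked}, which as the tightest point of the $(+^{<\theta})$ hierarchy implies $(+^{<\nu_\xi})$.

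The main obstacle is the simultaneous diagonal choice of the sequence $\{(c_\xi,P_\xi,\pi_\xi,\rho_\xi)\}_{\xi<\delta}$: the combinatorial inequality displayed above must hold eventually for \emph{every} pair $\xi\neq\xi'$. For countable $\delta$ this is a routine diagonalization; for $\delta$ up to $\kappa$ one must perform a careful transfinite recursion, balancing at each step the growth of $P_\xi$ against the previously fixed linkedness parameters and exploiting the freedom in the choice of $(\pi_\xi,\rho_\xi)$ allowed by Lemma \ref{LinkednessLocposet}, using the regularity of $\kappa$ to keep the construction coherent.
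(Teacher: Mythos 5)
Your high-level architecture (diagonalize the ideals against one another so that later localization posets are suitably linked with respect to earlier ideals, then iterate with bookkeeping and use the preservation lemmas of Section 5) matches the paper's, but there are two concrete gaps in the technical mechanism that the paper's proof has to work around, and your write-up does not.

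First, there is a mismatch in the $\rho$-parameter. You want $(+^1_{\cdot,\bar L,\Iwf_\xi,1})$ from Lemma~\ref{Presb(I)param linked}, but that lemma yields $(+^1_{\Qor,\bar L,\Iwf,\rho})$ with the \emph{same} $\rho$ that witnesses $\langle\pi,\rho\rangle$-linkedness of $\Qor$. If you let the linking parameters $\rho_{\xi'}$ vary with the ideal, the property you obtain is $(+^1_{\cdot,\bar L,\Iwf_\xi,\rho_{\xi'}})$, whose $\rho$-parameter depends on the iterand, not on the ideal. The preservation machinery of Lemma~\ref{Presb(I)paramfsi} (and the upper bound of Theorem~\ref{b(I)leqparam}) is stated for a \emph{fixed} $\rho$: you need a single $\rho$ such that every iterand satisfies $(+^{<\nu_\xi}_{\cdot,\bar L,\Iwf_\xi,\rho})$, and then you bound $\bfrak^{\rho^{id_\omega}}(\Iwf_\xi,\bar L)$, not $\bfrak^1$. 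The same problem recurs in the $\add(\Nwf)\le\mu$ argument: Lemma~\ref{PresAddNlinked} produces one sequence $\bar\Gwf$ per pair $(\pi,\rho)$, and you need a single $\bar\Gwf$ that works across all localization iterands simultaneously; this again forces a common $\rho$. The paper fixes a specific iterated-exponential $\rho$ once and for all and makes every iterand $\langle\pi_\xi,\rho\rangle$-linked and $\langle id_\omega,\rho\rangle$-linked with this one $\rho$.

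Second, you flag the recursive construction of $\{(c_\xi,P_\xi,\pi_\xi,\rho_\xi)\}_{\xi<\delta}$ as ``the main obstacle'' but leave it as a black box. This is exactly where the paper invests its technical effort. At a limit stage $\xi$ of uncountable cofinality one must find a function dominating all the previously chosen $\pi_\eta$ ($\eta<\xi$), and one cannot dominate arbitrary families of size up to $\kappa$ in general. The paper resolves this by (i) assuming WLOG $\bfrak=\kappa$ in the ground model, so families of size $<\kappa$ are $\le^*$-bounded; (ii) proving that $\bfrak$ is the least size of an unbounded family in the subfamily $\R^\rho$ (Lemma~\ref{b(R^rho)}); and (iii) checking that $\R^\rho$ is closed under the operations $x\mapsto 2^x$, $x\mapsto x^{id_\omega\rho^{id_\omega}}$, $x\mapsto|[x(\cdot)]^{\le\rho(\cdot)^{\cdot}}|$, etc.~(Lemma~\ref{operR^rho}), so that the dominating function produced at each stage stays in $\R^\rho$ and the condition $\forall^\infty_i\,i\cdot|[b_\xi(i-1)]^{\le i}|^i\le\rho(i)$ holds automatically. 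Without this uniform $\rho$ and the $\R^\rho$ closure facts, it is not clear the recursion can be carried past $\omega_1$, so ``routine diagonalization'' does not settle the case $\delta$ uncountable.

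Minor point: your displayed condition is stated for all $\xi\ne\xi'$, but only the case $\xi<\xi'$ is needed; for $\xi'\le\xi$ one has $|\Fwf|<\nu_{\xi'}\le\nu_\xi$ and the centeredness argument suffices, exactly as you say later. With the single-$\rho$ fix and the $\R^\rho$ machinery spelled out, the rest of your proof goes through as intended.
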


For the proof of this theorem, we use another characterization of the bounding number $\bfrak$. To fix some notation, define an elementary exponentiation operation $\sigma:\omega\times\omega\to\omega$ given by $\sigma(n,0)=1$ and $\sigma(n,m+1)=n^{\sigma(n,m)}$. Put $\rho:\omega\to\omega$ such that $\rho(0)=2$ and
$\rho(i+1)=\sigma(\rho(i),i+3)$. For a function $x\in\omega^\omega$ define, by recursion on $k<\omega$, $x^{[0]}=x$ and $x^{[k+1]}=2^{\rho^2\cdot x^{[k]}}$. Now, let
\[\R^\rho:=\{ x\in\omega^\omega\ /\ \forall_{k<\omega}\forall^\infty_{i<\omega}(x^{[k]}(i)\leq\rho(i+1))\}.\]

\begin{lemma}\label{operR^rho}
   Let $x\in\R^\rho$. Then,
   \begin{enumerate}[(a)]
     \item $id_\omega\in\R^\rho$.
     \item The functions $2^x$, $x^{id_\omega\cdot\rho^{id_\omega}}$, $id_\omega\cdot x$ and $y$ defined as
           $y(i)=|[x(i)]^{\leq\rho(i)^i}|$ are in $\R^\rho$.
     \item $z\in \R^\rho$ where $z(i)=\max\{x(j)\}_{j\leq i}$.
     \item $\forall^\infty_{i<\omega}(i\cdot|[x(i-1)]^{\leq i}|^i\leq\rho(i))$.
   \end{enumerate}
\end{lemma}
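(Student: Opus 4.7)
Write $T_n(r) := \sigma(r,n)$, so that $\rho(i+1) = T_{i+3}(\rho(i))$; the crucial feature is that $\rho(i+1)$ is a tower of $\rho(i)$'s whose height \emph{grows with $i$}, hence eventually dominates any fixed-height tower. I will first record two elementary monotonicity facts about $[\,\cdot\,]$: (i) if $f(i) \leq g(i)$ for $i \geq N$, then $f^{[k]}(i) \leq g^{[k]}(i)$ for $i \geq N$ (induction on $k$, since $v \mapsto 2^{\rho(i)^2 v}$ is monotone); and (ii) $(f^{[c]})^{[k]} = f^{[k+c]}$. Together with the estimate $\rho(i)^2 \cdot T_n(\rho(i)) \leq T_{n+1}(\rho(i)) \cdot \log_2 \rho(i)$ (valid for $n \geq 1$ once $\rho(i)$ is large), these imply that one application of $[\,\cdot\,]$ raises the tower height by at most $2$: if $f(i) \leq T_n(\rho(i))$ eventually with $n \geq 1$, then $f^{[k]}(i) \leq T_{n+2k}(\rho(i)) \leq T_{i+3}(\rho(i)) = \rho(i+1)$ once $i \geq n+2k-3$. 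So any function eventually dominated by a fixed-height tower in $\rho$ belongs to $\R^\rho$, and (a) follows at once from $id_\omega(i) = i \leq \rho(i) = T_1(\rho(i))$.

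For (b), the plan is to show each of the four functions is $\leq^* x^{[c]}$ for some fixed $c$; combining this with (i), (ii) and the trivial observation that $x \in \R^\rho$ implies $x^{[c]} \in \R^\rho$ then yields the conclusion. The bounds are: $2^{x(i)} \leq 2^{\rho(i)^2 x(i)} = x^{[1]}(i)$; $i \cdot x(i) \leq \rho(i)^2 \cdot x(i) \leq 2^{\rho(i)^2 x(i)} = x^{[1]}(i)$ once $i \leq \rho(i)^2$; $|[x(i)]^{\leq \rho(i)^i}| \leq 2^{x(i)} \leq x^{[1]}(i)$ by the crude powerset bound; and $x(i)^{i \rho(i)^i} \leq 2^{i \rho(i)^i \cdot x(i)} \leq x^{[2]}(i) = 2^{\rho(i)^2 \cdot 2^{\rho(i)^2 x(i)}}$ for large $i$, the last step using $i \rho(i)^i \leq 2^{\rho(i)^2}$, which is routine since $\rho(i)^2 \gg i \log_2 \rho(i)$ eventually.

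The running maximum $z$ in (c) need not be pointwise dominated by any $x^{[c]}$, so a finer argument is needed. For each fixed $k$, I will ``invert'' the condition $x^{[2k]}(j) \leq \rho(j+1) = T_{j+3}(\rho(j))$ (valid for $j \geq N_{2k}$), peeling off one exponential at a time: since $x^{[2k-m]}(j) = 2^{\rho(j)^2 x^{[2k-m-1]}(j)}$, the bound $x^{[2k-m]}(j) \leq T_{j+3-m}(\rho(j)) = \rho(j)^{T_{j+2-m}(\rho(j))}$ forces $x^{[2k-m-1]}(j) \leq T_{j+2-m}(\rho(j))$, and iterating $2k$ times yields $x(j) \leq T_{j+3-2k}(\rho(j))$ for $j$ large. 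Since the right-hand side is non-decreasing in $j$, the finitely many initial values of $x$ get absorbed and $z(i) \leq T_{i+3-2k}(\rho(i))$ eventually, whereupon the bookkeeping from paragraph one gives $z^{[k]}(i) \leq T_{i+3}(\rho(i)) = \rho(i+1)$ eventually, proving $z \in \R^\rho$. For (d), applying the same inversion to $x^{[1]}(i-1) \leq \rho(i) = T_{i+2}(\rho(i-1))$ yields $x(i-1) \leq T_{i+1}(\rho(i-1))/\rho(i-1)$; substituting into $|[x(i-1)]^{\leq i}|^i \leq (x(i-1)+1)^{i^2}$ and comparing with $\rho(i) = \rho(i-1)^{T_{i+1}(\rho(i-1))}$ reduces, after taking $\log_{\rho(i-1)}$, to an inequality of the shape $4i^2 \cdot T_i(\rho(i-1)) \leq \rho(i-1)^{T_i(\rho(i-1))}$, which holds comfortably since $T_i(\rho(i-1))$ is astronomically larger than $i^2$. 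The main bookkeeping obstacle is choosing the right number of peelings in (c): one must invoke $x^{[2k]}$ rather than $x^{[k]}$ so that the $2k$ tower levels gained by the inversion exactly pay for the $2k$ levels consumed by the final $[\,\cdot\,]^{[k]}$ applied to $z$.
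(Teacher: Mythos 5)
Your argument is correct, and it diverges from the paper's proof in (c) and (d). Parts (a) and (b) proceed essentially as the paper does: one application of $[\cdot]$ raises the height of a $\rho(i)$-tower by at most two (the paper verifies $id_\omega^{[k]}(i)\leq\sigma(\rho(i),2k+1)$ directly; your estimate $\rho(i)^2\sigma(\rho(i),n)\leq\sigma(\rho(i),n+1)\log_2\rho(i)$ is the same observation stated generically), and then each function in (b) is bounded by $x^{[1]}$ or $x^{[2]}$ exactly as you note. For (c) your \emph{peeling} is a genuinely different route: the paper introduces the thresholds $N_k$ and a sequence $\{k_j\}$ chosen so that $\rho(N_{k_j}+1)$ dominates $x^{[j]}(i)$ for $i<N_j$, and then asserts $z^{[j]}(i)\leq\rho(i+1)$ for $i\geq N_{k_j}$, leaving the propagation of constants rather terse. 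You instead invert $x^{[m+1]}(j)=2^{\rho(j)^2x^{[m]}(j)}\leq\sigma(\rho(j),s)$ to get $x^{[m]}(j)\leq\sigma(\rho(j),s-1)$, iterate $2k$ times from $x^{[2k]}(j)\leq\rho(j+1)=\sigma(\rho(j),j+3)$ to reach the explicit bound $x(j)\leq\sigma(\rho(j),j+3-2k)$ for $j$ large, and crucially exploit that the right-hand side is non-decreasing in $j$ before passing to the running maximum $z$ and re-applying $[\cdot]^k$. Your bookkeeping remark—that $x^{[2k]}$ must be invoked rather than $x^{[k]}$ because the inversion gains and the final $[\cdot]^k$ spends the same $2k$ tower levels—is exactly the right accounting; this version is cleaner and self-contained. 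For (d) the paper gives the one-line direct estimate $(i+1)|[x(i)]^{\leq i+1}|^{i+1}\leq 2^{2(i+1)x(i)}\leq x^{[1]}(i)\leq^*\rho(i+1)$ (handling $x(i)=0$ separately), so reusing the peeling there, while correct, is heavier than necessary.
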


\begin{proof}
  \begin{enumerate}[(a)]
     \item It is enough to show that $id_\omega^{[k]}(i)\leq\sigma(\rho(i),2k+1)$ for all $i<\omega$ and $k<\omega$ by induction on $k$. The case $k=0$ is   clear. For the induction step,
         \begin{multline*}
             id_\omega^{[k+1]}(i)=2^{id_\omega^{[k]}(i)\cdot\rho(i)^2}\leq\rho(i)^{\sigma(\rho(i),2k+1)\cdot
                    \rho(i)^{\rho(i)}}\\
                    \leq\rho(i)^{\sigma(\rho(i),2k+2)}=\sigma(\rho(i),2k+3).
         \end{multline*}
     \item Clear because $2^x\leq^*x^{[1]}$, $x^{id_\omega\cdot\rho^{id_\omega}}\leq^* x^{[2]}$, $id_\omega\cdot x\leq^* x^{[1]}$ and $y\leq^* x^{[1]}$.
     \item Let $N_k$ be minimal such that $\forall_{i\geq N_k}(x^{[k]}(i)\leq\rho(i+1))$. As the sequence $\{x^{[k]}(i)\}_{k<\omega}$ is strictly increasing for each $i<\omega$, $\{N_k\}_{k<\omega}$ is non-decreasing and converges to infinity. Let $\{k_j\}_{j<\omega}$ be a strictly increasing sequence of natural numbers such that $\rho(N_{k_j}+1)$ is bigger than $x^{[j]}(i)$ for all $i<N_j$. Then, $z^{[j]}(i)\leq\rho(i+1)$ for all $i\geq N_{k_j}$.
     \item Note that $(i+1)|[x(i)]^{\leq(i+1)}|^{i+1}\leq2^{x(i)\cdot(i+1)\cdot 2}\leq x^{[1]}(i)$ for all but finitely many $i$ such that $x(i)\neq 0$. The case $x(i)=0$ is straightforward.
  \end{enumerate}
\end{proof}

To proceed with the proof of Theorem \ref{Consdiffb(I)}, we first need to see that $\bfrak$ is the least size of a $\leq^*$-unbounded family in $\R^\rho$. But we can prove a more general result instead. Fix $g,H\in\omega^\omega$ such that $H$ is strictly increasing and $id_\omega<H$. Define $\R^g_H:=\{x\in\omega^\omega\ /\ \forall_{k<\omega}(x^{[k]}\leq^* g)\}$ where $x^{[0]}=x$ and $x^{[k+1]}=H\circ x^{[k]}$. With the particular case $g(i)=\rho(i+1)$ and $H=2^{\rho^2\cdot id_\omega}$, what we need is just a consequence of the following.

\begin{lemma}\label{b(R^rho)}
   Assume that $\R^g_H\neq\varnothing$. Then, $\bfrak$ is the least size of a $\leq^*$-unbounded family in $\R^g_H$.
\end{lemma}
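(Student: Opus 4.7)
The plan is to set up an order-preserving (up to $\leq^*$) correspondence between $(\R^g_H,\leq^*)$ and the strictly increasing functions in $(\omega^\omega,\leq^*)$, which reduces the computation of the bounding number of $\R^g_H$ to the classical $\bfrak$. For each $w\in\R^g_H$ I associate the \emph{rate function} $N^w\in\omega^\omega$ defined by $N^w(k)=\min\{N:\forall i\geq N,\ H^k(w(i))\leq g(i)\}$, which is well-defined because $w\in\R^g_H$; and it tends to infinity since at each individual $i$ only finitely many $k$ satisfy $H^k(w(i))\leq g(i)$ (as $H>id_\omega$ forces $H^k(w(i))\to\infty$), so the thresholds $N^w(k)$ must grow with $k$. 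Conversely, for strictly increasing $N\in\omega^\omega$ lying above a fixed baseline (whose existence is guaranteed by $\R^g_H\neq\varnothing$), set $N^{-1}(i)=\max\{k:N(k)\leq i\}$ and define $z_N(i)=\max\{m:H^{N^{-1}(i)}(m)\leq g(i)\}$. Then $z_N\in\R^g_H$, since for every $k$ and every $i\geq N(k)$ one has $N^{-1}(i)\geq k$ and hence $H^k(z_N(i))\leq H^{N^{-1}(i)}(z_N(i))\leq g(i)$.

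The crux is the equivalence
\[w\leq^* z_N\ \iff\ N^w\leq^* N,\]
for $w\in\R^g_H$ and admissible strictly increasing $N$. For ``$\Rightarrow$'': $w(i)\leq z_N(i)$ a.e.\ yields $H^{N^{-1}(i)}(w(i))\leq g(i)$ a.e., and for fixed $k$ with $i\geq N(k)$ one has $N^{-1}(i)\geq k$, so $H^k(w(i))\leq H^{N^{-1}(i)}(w(i))\leq g(i)$, showing $N^w(k)\leq N(k)$ for large $k$. For ``$\Leftarrow$'': taking $k=N^{-1}(i)$ with $i$ large enough that $N^w(k)\leq N(k)$, one has $N^w(k)\leq N(k)\leq i$, forcing $H^{N^{-1}(i)}(w(i))\leq g(i)$, i.e.\ $w(i)\leq z_N(i)$.

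Both inequalities of the lemma then follow. For $\bfrak(\R^g_H)\leq\bfrak$: take a $\leq^*$-unbounded family $\bar\Fwf\subseteq\omega^\omega$ of strictly increasing functions of size $\bfrak$, after replacing each $N$ by its pointwise max with the baseline to make it admissible (this does not destroy unboundedness); then $\{z_N:N\in\bar\Fwf\}\subseteq\R^g_H$ has size $\bfrak$ and is $\leq^*$-unbounded in $\R^g_H$, because any $w\in\R^g_H$ bounding it would, by the equivalence, produce $N^w$ as a common $\leq^*$-upper bound for $\bar\Fwf$, a contradiction. For $\bfrak(\R^g_H)\geq\bfrak$: given $\Fwf\subseteq\R^g_H$ with $|\Fwf|<\bfrak$, the family $\{N^w:w\in\Fwf\}$ has size $<\bfrak$, hence admits a strictly increasing admissible $\leq^*$-upper bound $\bar N$; then $z_{\bar N}\in\R^g_H$ dominates every $w\in\Fwf$ by the equivalence, so $\Fwf$ is $\leq^*$-bounded in $\R^g_H$.

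The main obstacle is the careful bookkeeping in the equivalence, especially the change of variable $k=N^{-1}(i)$ translating between ``a.e.\ $i$'' and ``a.e.\ $k$'' statements while keeping $N$ admissible. A secondary detail is checking that $N\mapsto z_N$ does not collapse $\leq^*$-classes, which is immediate from the equivalence applied with $w=z_{N'}$: if $z_{N'}\leq^* z_N$ then $N'=^* N^{z_{N'}}\leq^* N$, so distinct $\leq^*$-classes of admissible $N$'s yield distinct classes of $z_N$'s, and the constructed unbounded family really has size $\bfrak$.
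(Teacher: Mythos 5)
Your overall strategy -- an order-preserving (Tukey-style) correspondence $N\mapsto z_N$, $w\mapsto N^w$ between admissible functions in $(\omega^\omega,\leq^*)$ and $(\R^g_H,\leq^*)$ -- is exactly the paper's, and your $z_N$, $N^w$ coincide with the paper's $F_z$, $F'_x$. But there is a genuine gap in the direction $\bfrak(\R^g_H)\leq\bfrak$, and it stems from the fact that you prove the wrong pair of implications. What the paper proves (and what the Tukey argument actually requires) are the two implications
\[
\textrm{(i)}\ \ z_N\leq^* w\ \Longrightarrow\ N\leq^* N^w,\qquad\qquad
\textrm{(ii)}\ \ N^w\leq^* N\ \Longrightarrow\ w\leq^* z_N.
\]
Your ``equivalence'' $w\leq^* z_N\iff N^w\leq^* N$ contains (ii) (the $\Leftarrow$ direction), but your $\Rightarrow$ direction is the \emph{converse} of (ii), not (i). Since $\leq^*$ is only a partial (pre)order, (i) does not follow from the biconditional: if $z_N\leq^* w$ for all $N$ in an unbounded family, you cannot conclude from $w\leq^* z_N\iff N^w\leq^* N$ that $N\leq^* N^w$, and the sentence ``by the equivalence, produce $N^w$ as a common $\leq^*$-upper bound'' is unjustified. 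Your attempted repair via the ``secondary detail'' $N'=^*N^{z_{N'}}$ does not close the gap either: applying the equivalence with $w=z_{N'}$ and $N=N'$ gives only $N^{z_{N'}}\leq^* N'$; the reverse inequality $N'\leq^* N^{z_{N'}}$ -- which is precisely what you need to recover (i) from the chain $z_N\leq^* w\leq^* z_{N^w}$ -- is not ``immediate from the equivalence'' and requires a separate computation (showing that for $i\in[N(k-1),N(k))$ the value $z_N(i)=C_{k-1}(i)$ is large enough that $H^k(z_N(i))>g(i)$, using $C_k(i)=H'(C_{k-1}(i))<C_{k-1}(i)$). This last computation is essentially the content of the paper's proof of (i), done there by a direct contradiction argument. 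So your lower bound $\bfrak(\R^g_H)\geq\bfrak$ is fine (it uses only (ii)), but your upper bound needs either a direct proof of (i) in the paper's style, or an honest proof that $N\leq^* N^{z_N}$; as written, neither is present.
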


\begin{proof}
   For $x\in\R^g_H$ and $k<\omega$, let $N^x_k$ be the minimal $N<\omega$ such that $\forall_{i\geq N}(x^{[k]}(i)\leq g(i))$. $\{N^x_k\}_{k<\omega}$ is non-decreasing and converges to infinity because $H>id_\omega$. Consider the function $H'$ of natural numbers such that
   $H'(m)$ is the maximal $n<\omega$ such that $H(n)\leq m$. Note that the domain of $H'$ is $[H(0),\omega)$ and that $H(n)\leq m$ iff $H'(m)$ is defined and $n\leq H'(m)$. Also, $H'(m)<m$ for all $m\geq H(0)$. Define, for $k<\omega$, the function
   $C_k$ on a subset of $\omega$ by $C_0(i)=g(i)$ and $C_{k+1}(i)=H'(C_k(i))$.
   \begin{claim}\label{claimR^rho}
      Let $x\in\omega^\omega$, $i,k<\omega$. Then, $x^{[k]}(i)\leq g(i)$ iff $C_k(i)$ exists and $x(i)\leq C_k(i)$.
   \end{claim}
   \begin{proof}
      Fix $i<\omega$ and $0<M<\omega$. Define, when possible, $C^M_k$ for $k<\omega$ such that $C^M_0=M$ and $C^M_{k+1}=H'(C^M_k)$. It is enough to prove, by induction on $k$ that, for all $0<M<\omega$, $x^{[k]}(i)\leq M$ iff $C^M_k$ exists and $x(i)\leq C^M_k$ (our claim is the particular case $M=g(i)$).
      The case $k=0$ is trivial, so we proceed to prove the inductive step. $x^{[k+1]}(i)\leq M$ is equivalent to $x^{[k]}(i)\leq C_1^M$ which is equivalent, by induction hypothesis, to the existence of $C^{C^M_1}_k$ and $x(i)\leq C^{C^M_1}_k=C^M_{k+1}$.
   \end{proof}
   Note that, as there is come $c\in\R^g_H$, the functions $C_k$ are defined for all but finitely many natural numbers and they converge to infinity. Indeed, by the previous claim, $\forall_{i\geq N^{c}_{k+M}}(c(i)+M\leq C_k(i))$.
   Now, consider $W$ as the set of non-decreasing functions $z\in\omega^\omega$ such that $\forall_{k<\omega}(z(k)\geq N^{c}_k)$. It is clear that $\bfrak$ is the least size of a $\leq^*$-unbounded family in $W$.

   Define the function $F:W\to\R^g_H$ such that, $F(z)=F_z:\omega\to\omega$, $F_z(i)=C_k(i)$ when $i\in[z(k),z(k+1))$ (we don't care about the values below $z(0)$). Claim \ref{claimR^rho} guarantees that $F_z\in\R^g_H$. Also, let $F':\R^g_H\to\omega^\omega$, $F'(x)=F'_x$ such that $F'_x(i)=N^x_i$. The lemma follows from the fact that, for any $x\in\R^g_H$ and $z\in W$,
   \begin{enumerate}[(i)]
     \item $F_z\leq^* x$ implies $z\leq^* F'_x$, and
     \item $F'_x\leq^*z$ implies $x\leq^* F_z$.
   \end{enumerate}
   To prove (i), assume that there is a $\bar{k}<\omega$ such that $\forall_{i\geq z(\bar{k})}(F_z(i)\leq x(i))$. Let $k'<\omega$ be minimal such that $z(\bar{k})<z(k')$ and prove that $z(k)\leq N^x_k$ for all $k\geq k'$. By contradiction, assume that there is a minimal $k\geq k'$ such that $N^x_k<z(k)$, so there exists an $i\in[z(k-1),z(k))$ with $i\geq N^x_k$. Then, $C_{k-1}(i)=F_z(i)\leq x(i)$ and $x(i)\leq C_k(i)$ (by Claim \ref{claimR^rho}). But $C_k(i)=H'(C_{k-1}(i)) < C_{k-1}(i)$, a contradiction.

   For (ii), assume that there is a $\bar{k}<\omega$ such that $N^x_k\leq z(k)$ for all $k\geq\bar{k}$. If $i\geq z(\bar{k})$, we can find a $k\geq\bar{k}$ such that $i\in[z(k),z(k+1))$, so $x(i)\leq C_k(i)=F_z(i)$ because $i\geq z(k)\geq N_k^x$.
\end{proof}

\begin{proof}[Proof of Theorem \ref{Consdiffb(I)}]
   Without loss of generality, we may assume that $\bfrak=\kappa$ in the ground model $V$.
   Construct, for $\xi<\delta$, functions $h_\xi,a_\xi,b_\xi,\pi_\xi\in\omega^\omega$ such that
   \begin{enumerate}[(a)]
      \item $h_0=id_\omega^2$ and, for $\xi>0$, $h_\xi$ is non-decreasing, converges to infinity and, for $\eta<\xi$, $id_\omega\cdot\pi_\eta\leq^* h_\xi$
      \item $a_\xi>0$ is non-decreasing, converges to infinity and $h_\xi^{id_\omega\cdot\rho^{id_\omega}}\leq^*a_\xi$,
      \item $b_\xi$ and $\pi_\xi$ are defined as $b_\xi=2^{a_\xi}$ and $\pi_\xi(i)=|[b_\xi(i)]^{\leq\rho(i)^i}|$, and
      \item $\forall^\infty_{i<\omega}(i\cdot|[b_\xi(i-1)]^{\leq i}|^i\leq\rho(i))$.
   \end{enumerate}
   We can construct all those functions in $\R^\rho$. To see this, fix $\xi<\delta$ and assume that we have all these functions for $\eta<\xi$. By Lemma \ref{operR^rho}, $id_\omega\cdot\pi_\eta\in\R^\rho$ for all $\eta<\xi$, so there exists a non-decreasing function $h_\xi\in\R^\rho$ bounding them by Lemma \ref{b(R^rho)}. Put $a_\xi:=\max\{h_\xi^{id_\omega\cdot\rho^{id_\omega}},1\}$, which is in $\R^\rho$. Clearly, $b_\xi,\pi_\xi\in\R^\rho$ and (d) is true.

   For each $\xi<\delta$, let $P_\xi=\{a_{\xi,i}\}_{i<\omega}$ be the interval partition of $\omega$ such that $|a_{\xi,i}|=a_\xi(i)$, define $c_\xi(i)=\max\{h_\xi(i),2\}$ and let $\Iwf_\xi:=\Iwf_{c_\xi}(P_\xi)$ (see Example \ref{ExpFragId}(2)). Perform a fsi $\Por_{(3+\delta)\cdot\lambda}:=\langle\Por_\alpha,\Qnm_\alpha\rangle_{\alpha<(3+\delta)\cdot\lambda}$ such that, for $\gamma<\lambda$,
   \begin{enumerate}[(i)]
      \item If $\alpha=(3+\delta)\cdot\gamma$, let $\Qnm_\alpha$ be a $\Por_\alpha$-name for Cohen forcing,
      \item if $\alpha=(3+\delta)\cdot\gamma+1$, let $\Qnm_\alpha$ be a $\Por_\alpha$-name for a subalgebra of $\Loc^{id_\omega}$ of size $<\mu$,
      \item if $\alpha=(3+\delta)\cdot\gamma+2$, let $\Qnm_\alpha$ be a $\Por_\alpha$-name for a subalgebra of $\Dor$ of size $<\kappa$,
      \item if $\alpha=(3+\delta)\cdot\gamma+3+\xi$ for $\xi<\delta$, let $\Qnm_\alpha=\Qor^{h_\xi}_{b_\xi,\dot{\Fwf}_\alpha}$ where $\dot{\Fwf}_\alpha$ is a $\Por_\alpha$-name of a subset of $\R_{b_\xi}$ of size $<\nu_\xi$.
   \end{enumerate}
   By a book-keeping argument, we make sure to use all such posets of the extension in the iteration.  Choose $\bar{L}=\{L_n\}_{n<\omega}$ any partition of $\omega$ into infinite sets.
   \begin{claim}\label{claimconsdifb(I)presb(I)param}
      For every $\xi<\delta$ and $\alpha<(3+\delta)\cdot\lambda$, $\Por_\alpha$ forces $(+^{<\nu_\xi}_{\Qnm_\alpha,\bar{L},\Iwf_\xi,\rho})$
   \end{claim}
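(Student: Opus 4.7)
The plan is to fix $\xi<\delta$ and $\alpha<(3+\delta)\cdot\lambda$ and verify $(+^{<\nu_\xi}_{\Qnm_\alpha,\bar{L},\Iwf_\xi,\rho})$ by a case analysis following the four possibilities (i)--(iv) used to define $\Qnm_\alpha$; in case (iv) we further split on whether the index $\eta$ of $\Qnm_\alpha=\Qor^{h_\eta}_{b_\eta,\dot{\Fwf}_\alpha}$ satisfies $\eta\leq\xi$ or $\eta>\xi$. The two tools available are Lemma \ref{Presb(I)paramcentered} (centeredness by a cardinal strictly below $\nu_\xi$) and Lemma \ref{Presb(I)param linked} (parameterised linkedness together with the linkedness witness provided by Lemma \ref{LinkednessLocposet}). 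I would route cases (i), (ii), (iii) and case (iv) with $\eta\leq\xi$ through the first lemma, and case (iv) with $\eta>\xi$ through the second.

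For the centeredness route I would argue as follows. Cohen forcing is countable hence $\aleph_0$-centered; subalgebras of $\Dor$ of size $<\kappa$ are $\sigma$-centered via the natural decomposition $\Dor=\bigcup_{s\in\omega^{<\omega}}\{(s,f)\in\Dor\}$ inherited by subalgebras; subalgebras of $\Loc^{id_\omega}$ of size $<\mu$ are trivially $\nu$-centered for $\nu:=\max(\aleph_0,|\text{subalgebra}|)<\mu\leq\nu_\xi$, taking each singleton as a centered class; and in case (iv) with $\eta\leq\xi$, the forcing $\Qor^{h_\eta}_{b_\eta,\dot{\Fwf}_\alpha}$ has size at most $\max(\aleph_0,|\dot{\Fwf}_\alpha|)<\nu_\eta\leq\nu_\xi$ and is therefore again trivially $\nu$-centered for some $\nu<\nu_\xi$. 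Since each of these posets is ccc (Cohen is countable, $\Dor$ is $\sigma$-centered, $\Loc^{id_\omega}$ is $\sigma$-linked, and $\Qor^{h_\eta}_{b_\eta,\dot{\Fwf}_\alpha}$ is $\sigma$-linked by Lemma \ref{LocPosetLemma}), Lemma \ref{Presb(I)paramcentered} applies and delivers $(+^{<\nu_\xi}_{\Qnm_\alpha,\bar{L},\Iwf_\xi,\rho})$.

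For the linkedness route, applicable in case (iv) when $\eta>\xi$, I would invoke Lemma \ref{LinkednessLocposet} with $b:=b_\eta$, $h:=h_\eta$, $\pi:=\pi_\xi$, and $m_k:=k$. The required inequality $k\cdot\pi_\xi(k)\leq h_\eta(m_k)=h_\eta(k)$ for all but finitely many $k$ is exactly clause (a) of the construction, which asserts $id_\omega\cdot\pi_\xi\leq^* h_\eta$ whenever $\xi<\eta$; the companion inequality $k\cdot|[b_\eta(k-1)]^{\leq k}|^k\leq\rho(k)$ eventually is exactly clause (d) for the index $\eta$. Hence $\Qor^{h_\eta}_{b_\eta,\dot{\Fwf}_\alpha}$ is $\langle\pi_\xi,\rho\rangle$-linked. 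Because $|[\Pwf(a_{\xi,i})]^{\leq\rho(i)^i}|=|[b_\xi(i)]^{\leq\rho(i)^i}|=\pi_\xi(i)$ directly from the definitions of $\pi_\xi$ and of the partition $P_\xi$, Lemma \ref{Presb(I)param linked} produces $(+^1_{\Qnm_\alpha,\bar{L},\Iwf_\xi,\rho})$, which trivially implies $(+^{<\nu_\xi}_{\Qnm_\alpha,\bar{L},\Iwf_\xi,\rho})$ since $\nu_\xi\geq\aleph_1>1$.

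The delicate step is the case $\eta>\xi$ of (iv), where an iterand specifically designed to witness a small value of $\blocfrak(b_\eta,h_\eta)$ (and hence to force $\bfrak(\Iwf_\eta)$ small) must nonetheless preserve the promised upper bound $\bfrak^1(\Iwf_\xi,\bar{L})\leq\nu_\xi$ for the strictly smaller ideal $\Iwf_\xi$. This is exactly why the functions $h_\eta,b_\eta,\pi_\eta\in\R^\rho$ had to be constructed by recursion on $\eta$ with condition (a) ($id_\omega\cdot\pi_\xi\leq^* h_\eta$ for all $\xi<\eta$) and condition (d) in place: these are what match the hypotheses of Lemma \ref{LinkednessLocposet} with the simple choice $m_k:=k$, and without them the $\langle\pi_\xi,\rho\rangle$-linkedness of $\Qor^{h_\eta}_{b_\eta,\dot{\Fwf}_\alpha}$ would fail.
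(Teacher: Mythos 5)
Your proposal is correct and follows essentially the same approach as the paper's proof: the same case split on the residue $\xi''$ of $\alpha$ modulo $3+\delta$ (your $\eta$ is the paper's $\xi'$), with cases (i)--(iii) and case (iv) with $\eta\leq\xi$ handled by $\nu$-centeredness (for $\nu<\nu_\xi$) via Lemma \ref{Presb(I)paramcentered}, and case (iv) with $\eta>\xi$ handled by showing $\langle\pi_\xi,\rho\rangle$-linkedness via conditions (a),(d) and Lemma \ref{LinkednessLocposet} with $m_k=k$, then invoking (c) and Lemma \ref{Presb(I)param linked}. Your spelled-out verification of ccc and of the size computation for $\Qor^{h_\eta}_{b_\eta,\dot{\Fwf}_\alpha}$ just makes explicit what the paper leaves implicit.
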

   \begin{proof}
      Let $\alpha=(3+\delta)\cdot\gamma+\xi''$ for some $\gamma<\lambda$ and $\xi''<3 + \delta$. Step in $V_\alpha$. If $\xi''\leq2$ then $(+^{<\mu}_{\Qor_\alpha,\bar{L},\Iwf,\rho})$ holds by Lemma \ref{Presb(I)paramcentered}; if $\xi''=3+\xi'$ for some $\xi'<\delta$ then, if $\xi'\leq\xi$, as $|\Qor_\alpha|<\nu_{\xi'}\leq\nu_\xi$, $(+^{<\nu_\xi}_{\Qor_\alpha,\bar{L},\Iwf_\xi,\rho})$ holds by Lemma \ref{Presb(I)paramcentered}; if $\xi<\xi'$, by (a),(d) and Lemma \ref{LinkednessLocposet} (with $m_k=k$), $\Qor_\alpha=\Qor^{h_{\xi'}}_{b_{\xi'},\Fwf_\alpha}$ is $\langle\pi_\xi,\rho\rangle$-linked, thus, by (c) and Lemma \ref{Presb(I)param linked}, $(+^{1}_{\Qor_\alpha,\bar{L},\Iwf_\xi,\rho})$ holds.
   \end{proof}
   \begin{claim}\label{claimconsdifb(I)presaddN}
      In $V$, there is a sequence $\bar{\Gwf}=\{g_k\}_{k<\omega}$ of reals in $\omega^\omega$ that converges to infinity such that $\Por_\alpha$ forces $(+^{<\mu}_{\Qnm_\alpha,\bar{\Gwf}})$.
   \end{claim}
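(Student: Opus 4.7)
The plan is to exhibit a single sequence $\bar{\Gwf}$ in $V$ that works simultaneously for all $\alpha<(3+\delta)\cdot\lambda$. Each $\Qnm_\alpha$ is of one of the four types (i)--(iv). For type (i), $\Qnm_\alpha$ is Cohen, hence $\sigma$-centered; for type (iii), $\Qnm_\alpha$ is a subalgebra of the $\sigma$-centered $\Dor$ and so is $\sigma$-centered; for type (ii), $\Qnm_\alpha$ has size $<\mu$ and is thus trivially $<\mu$-centered. In each of these three cases Lemma \ref{PresAddNcentered} yields $(+^{<\mu}_{\Qnm_\alpha,\bar{\Gwf}})$ for \emph{any} $\bar{\Gwf}$, so the only genuine issue is type (iv).

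For $\alpha=(3+\delta)\cdot\gamma+3+\xi'$, $\Qnm_\alpha=\Qor^{h_{\xi'}}_{b_{\xi'},\dot{\Fwf}_\alpha}$ is $\langle\pi_{\xi'},\rho\rangle$-linked by conditions (a), (d) and Lemma \ref{LinkednessLocposet} (taking $m_k=k$), so individually Lemma \ref{PresAddNlinked} gives a sequence $\bar{\Gwf}_{\xi'}$ witnessing $(+^1_{\Qnm_\alpha,\bar{\Gwf}_{\xi'}})$. The task is to merge these $\delta$-many sequences into one.

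The key step is to strengthen the initial construction of $h_\xi,a_\xi,b_\xi,\pi_\xi$ so that $\pi_0\leq^*\pi_\xi$ for every $\xi<\delta$. Since Lemma \ref{b(R^rho)} lets us bound $<\bfrak$-sized subfamilies of $\R^\rho$ and $\delta\leq\kappa=\bfrak$, we may choose the $h_\xi$ to be $\leq^*$-increasing in $\xi$ (at each stage include $\{h_\eta,\, id_\omega\cdot\pi_\eta : \eta<\xi\}$ in the family being dominated); selecting $a_\xi$ to be the minimal function $h_\xi^{id_\omega\cdot\rho^{id_\omega}}$ permitted by (b) then propagates the $\leq^*$-monotonicity to $a_\xi$, $b_\xi$, and $\pi_\xi$, yielding in particular $\pi_\xi\geq^*\pi_0$ for all $\xi$. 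Applying Lemma \ref{PresAddNlinked} to $\pi_0$, $\rho$ and any $g\in\omega^\omega$ converging to infinity then produces a single sequence $\bar{\Gwf}=\{g_k\}_{k<\omega}$ in $V$ with $(+^1_{\Qor,\bar{\Gwf}})$ for every $\langle\pi_0,\rho\rangle$-linked poset $\Qor$.

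It remains to observe that any $\langle\pi_{\xi'},\rho\rangle$-linked poset $\Qnm_\alpha$ is also $\langle\pi_0,\rho\rangle$-linked: take a witness $\langle Q_{i,j}\rangle_{i<\omega,\,j<\rho(i)}$ and, at the finitely many $i<N_{\xi'}$ where $\pi_{\xi'}(i)<\pi_0(i)$, replace each $Q_{i,j}$ by a fixed singleton of $\Qnm_\alpha$ (trivially $\pi_0(i)$-linked); condition (b) of Definition \ref{Defparamlinked}, being required only eventually, is insensitive to this finite alteration. Consequently $(+^1_{\Qnm_\alpha,\bar{\Gwf}})$ holds, which implies $(+^{<\mu}_{\Qnm_\alpha,\bar{\Gwf}})$. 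The main obstacle is precisely the uniform arrangement $\pi_0\leq^*\pi_\xi$ for all $\xi<\delta$: without it one would be left with $\delta$-many sequences $\bar{\Gwf}_{\xi'}$ and no evident way to diagonalize them into one ground-model sequence converging to infinity. It is the hypothesis $\delta\leq\kappa=\bfrak$, exploited via Lemma \ref{b(R^rho)}, that makes this uniform bound available.
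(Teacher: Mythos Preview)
Your treatment of types (i)--(iii) is fine and matches the paper. The gap is in type (iv): you assert that $\Qnm_\alpha=\Qor^{h_{\xi'}}_{b_{\xi'},\dot{\Fwf}_\alpha}$ is $\langle\pi_{\xi'},\rho\rangle$-linked ``by (a), (d) and Lemma~\ref{LinkednessLocposet} with $m_k=k$'', but condition (a) only says $id_\omega\cdot\pi_\eta\leq^* h_{\xi'}$ for $\eta<\xi'$, never for $\eta=\xi'$. In fact $\pi_{\xi'}(k)=|[b_{\xi'}(k)]^{\leq\rho(k)^k}|\geq b_{\xi'}(k)=2^{a_{\xi'}(k)}$ grows far faster than $h_{\xi'}(k)$, so $k\cdot\pi_{\xi'}(k)\leq h_{\xi'}(m_k)$ cannot hold with $m_k=k$ (and pushing $m_k$ higher destroys the second hypothesis of Lemma~\ref{LinkednessLocposet}). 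What the previous claim actually gives is $\langle\pi_\eta,\rho\rangle$-linkedness for each $\eta<\xi'$; so for $\xi'>0$ you do get $\langle\pi_0,\rho\rangle$-linkedness directly, with no need to ``strengthen'' the construction. The real problem is $\xi'=0$: there is no $\eta<0$, and $\Qor^{h_0}_{b_0,\Fwf}$ is \emph{not} $\langle\pi_0,\rho\rangle$-linked (for the reason just given), so your argument does not cover this case.

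The paper's route avoids this by choosing a much smaller uniform linkedness parameter: apply Lemma~\ref{PresAddNlinked} with $\pi=id_\omega$ (not $\pi_0$) to produce $\bar{\Gwf}$ once and for all. Every $\Qor^{h_{\xi'}}_{b_{\xi'},\Fwf}$ is $\langle id_\omega,\rho\rangle$-linked: with $m_k=k$ in Lemma~\ref{LinkednessLocposet}, condition (d) gives the $\rho$-bound, and one needs only $k^2\leq^* h_{\xi'}$, which holds for $\xi'=0$ since $h_0=id_\omega^2$ and for $\xi'>0$ since $id_\omega\cdot\pi_0\leq^* h_{\xi'}$ by (a) and $\pi_0\geq^* id_\omega$ (as $\pi_0(k)\geq b_0(k)=2^{a_0(k)}\to\infty$). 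This dispenses with your proposed modification of the construction and handles all $\xi'$ uniformly.
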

   \begin{proof}
       By Lemma \ref{PresAddNlinked}, find
       $\bar{\Gwf}=\{g_k\}_{k<\omega}$ such that $(+^{1}_{\Qor,\bar{\Gwf}})$ holds for $\langle id_\omega,\rho\rangle$-linked posets $\Qor$. Now, step in $V_\alpha$. If $\alpha=(3+\delta)\cdot\gamma+\xi'$ for some $\xi'<3+\delta$, when $\xi'\leq 2$ then $(+^{<\mu}_{\Qor,\bar{\Gwf}})$ holds by Lemma \ref{PresAddNcentered}; else, if $\xi'=3+\xi$ for some $\xi<\delta$, the claim holds because $\Qor_\alpha$ is $\langle id_\omega,\rho\rangle$-linked (see the proof of the previous claim).
   \end{proof}
   It is known that, in $V_{(3+\delta)\cdot\lambda}$, $\bfrak=\kappa$ and $\cfrak=\lambda$. Also, $\add(\Nwf)\geq\mu$ because of the small subalgebras of $\Loc^{id_\omega}$ used in the iteration. By the same argument as in Theorem \ref{add(N)belowsomeb(I)}, we get, for $\xi<\delta$, $\nu_\xi\leq\blocfrak(b_\xi,h_\xi)$ so, by Lemma \ref{b(I_c(P))aboveLoc}, $\nu_\xi\leq\bfrak(\Iwf_\xi)$.

   To see $\add(\Nwf)\leq\mu$ note that, by Claim \ref{claimconsdifb(I)presaddN} and Lemma \ref{PresAddNfsi}, in $V_\mu$ we add a $<\mu$-$\in^*$-strongly unbounded family of size $\mu$ that is preserved in $V_{(3+\delta)\cdot\lambda}$, so $\add(\Nwf)\leq\mu$. Likewise, by Claim \ref{claimconsdifb(I)presb(I)param} and Lemma \ref{Presb(I)paramfsi}, for $\xi<\delta$, we add in $V_{\nu_\xi}$ a $<\nu_\xi$-$\rho$-strong covering family (with respect to $\Iwf_\xi$ and $\bar{L}$) of size $\nu_\xi$ that is preserved in $V_{(3+\delta)\cdot\lambda}$, so $\bfrak^{\rho^{id_\omega}}(\Iwf_\xi,\bar{L})\leq\nu_\xi$. But, by Theorem \ref{b(I)leqparam}, as $\lim_{i\to+\infty}\varphi_i(a_{\xi,i})/(\rho(i)^i)=+\infty$ by (b), we get $\bfrak(\Iwf_\xi)\leq\bfrak^{\rho^{id_\omega}}(\Iwf_\xi,\bar{L})\leq\nu_\xi$.
\end{proof}

To obtain (consistently) continuum many pairwise different Rothberger numbers, it is necessary that the continuum is a weakly inaccessible cardinal. Indeed, let $\{\Iwf_\xi\}_{\xi<\cfrak}$ be a sequence of ideals such that the numbers $\bfrak(\Iwf_\xi)$ are pairwise different. As there are continuum many and all of them are $\leq\bfrak$, we obtain $\cfrak=\bfrak$, so $\cfrak$ is regular. Also, as there are $\cfrak$-many different cardinals below $\cfrak$, $\cfrak$ has to be a limit cardinal. Likewise, the existence of $\bfrak$-many different Rothberger numbers implies that $\bfrak$ is weakly inaccessible.

\begin{corollary}
   Assume that $\lambda$ is a weakly inaccessible cardinal such that $\lambda^{<\lambda}=\lambda$, and let $\mu<\lambda$ be a regular cardinal. For any collection of pairwise different regular cardinals $\{\nu_\xi\}_{\xi<\lambda}\subseteq[\mu,\lambda]$, there exist tall gradually fragmented ideals $\Iwf_\xi$ for $\xi<\lambda$ and a ccc poset that forces $\add(\Nwf)=\mu$, $\bfrak=\cfrak=\lambda$ and $\bfrak(\Iwf_\xi)=\nu_\xi$ for any $\xi<\lambda$.
\end{corollary}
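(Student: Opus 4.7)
The plan is to reduce this statement directly to Theorem~\ref{Consdiffb(I)} via a reindexing argument; there is essentially no new combinatorics to be done, since the hypotheses of the corollary are tailored to match the theorem with $\kappa=\delta=\lambda$.

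First I would verify that the collection $\{\nu_\xi\ /\ \xi<\lambda\}$ can be reordered into a non-decreasing sequence of length $\lambda$. Since the $\nu_\xi$ are pairwise distinct, the set $N:=\{\nu_\xi\ /\ \xi<\lambda\}\subseteq[\mu,\lambda]$ has cardinality $\lambda$. As $N\subseteq\lambda+1$, its order type $\lambda'$ is an ordinal of cardinality $\lambda$ with $\lambda'\leq\lambda+1$; because $\lambda$ is a cardinal, this forces $\lambda'=\lambda$. Hence $N$ can be enumerated in strictly increasing order as $\{\nu'_\eta\ /\ \eta<\lambda\}$, and in particular this is a non-decreasing sequence of regular cardinals lying in $[\mu,\lambda]$, as required in the hypothesis of Theorem~\ref{Consdiffb(I)}.

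Next, I would invoke Theorem~\ref{Consdiffb(I)} with parameters $\mu\leq\kappa:=\lambda$, $\delta:=\lambda\leq\kappa$, the sequence $\{\nu'_\eta\}_{\eta<\lambda}$, and the cardinal $\lambda$ itself in the role of the theorem's $\lambda$; the condition $\lambda^{<\kappa}=\lambda^{<\lambda}=\lambda$ is exactly the hypothesis of the corollary. This produces tall gradually fragmented ideals $\{\Iwf'_\eta\}_{\eta<\lambda}$ in the ground model and a ccc poset $\Por$ that forces $\add(\Nwf)=\mu$, $\bfrak=\lambda$, $\cfrak=\lambda$, and $\bfrak(\Iwf'_\eta)=\nu'_\eta$ for every $\eta<\lambda$.

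Finally, I would reindex: let $\sigma:\lambda\to\lambda$ be the bijection determined by $\nu_\xi=\nu'_{\sigma(\xi)}$, and set $\Iwf_\xi:=\Iwf'_{\sigma(\xi)}$. Then each $\Iwf_\xi$ remains a tall gradually fragmented ideal in the ground model, and $\Por$ forces $\bfrak(\Iwf_\xi)=\nu'_{\sigma(\xi)}=\nu_\xi$ for all $\xi<\lambda$, together with $\add(\Nwf)=\mu$ and $\bfrak=\cfrak=\lambda$. There is no genuine obstacle here; the only thing worth checking is the cardinal-arithmetic observation that a pairwise distinct $\lambda$-sequence in $[\mu,\lambda]$ has order type exactly $\lambda$, which is why the weak inaccessibility assumption (implicit in the discussion preceding the statement) is natural.
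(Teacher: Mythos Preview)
Your reduction to Theorem~\ref{Consdiffb(I)} with $\kappa=\delta=\lambda$ is exactly what the paper intends; the corollary is stated there without proof as an immediate consequence.

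One small inaccuracy is worth flagging: your order-type argument is not quite right. If $\lambda$ itself occurs among the $\nu_\xi$, then $N=\{\nu_\xi:\xi<\lambda\}$ has order type $\lambda+1$, not $\lambda$; the clause ``because $\lambda$ is a cardinal'' does not exclude $\lambda+1$, which also has cardinality $\lambda$. In that case the increasing enumeration of $N$ has length $\lambda+1$, and Theorem~\ref{Consdiffb(I)} as stated requires $\delta\leq\kappa=\lambda$. This is easy to patch: the proof of Theorem~\ref{Consdiffb(I)} goes through verbatim for $\delta=\lambda+1$ once the preliminary ``without loss of generality $\bfrak=\kappa$'' step instead arranges $\bfrak>\lambda$ in the ground model (so that the recursive construction of the $h_\xi$ can be carried one step further); alternatively, apply the theorem to $N\smallsetminus\{\lambda\}$ and handle the single value $\nu_{\xi_0}=\lambda=\bfrak$ with one extra ideal built the same way. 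Your overall strategy is correct.
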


%
%

\section{Questions}\label{SecQ}

We have seen that $\bfrak(\Iwf) \geq \add(\Nwf)$ for all gradually fragmented ideals $\Iwf$ (Corollary~\ref{b(gradfrag)aboveAddN}) while for a large class of fragmented not gradually fragmented ideals $\Iwf$ we have $\bfrak(\Iwf)=\aleph_1$ (Theorems~\ref{b(EDfin)=aleph1}, \ref{b(unifsubmeas)=aleph1}, and~\ref{b(meas)=aleph1}). This gives rise to the following

\begin{conjecture}[dichotomy conjecture, Hru\v s\'ak]  \label{dichconj}
For fragmented ideals $\Iwf$,
\begin{itemize}
\item $\bfrak(\Iwf) = \aleph_1$ if $\Iwf$ is not gradually fragmented,
\item $\bfrak(\Iwf) \geq \add(\Nwf)$ if $\Iwf$ is gradually fragmented.
\end{itemize}
\end{conjecture}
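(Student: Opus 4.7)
The second bullet is already established in the paper as Corollary~\ref{b(gradfrag)aboveAddN}, so the real content of the conjecture is the first bullet: $\bfrak(\Iwf)=\aleph_1$ for every fragmented, not gradually fragmented ideal. My plan is to push the Cohen-independent-family argument of Theorem~\ref{b(unifsubmeas)=aleph1} to general submeasures by replacing the uniform ``slot size'' $s_{i,j,k}(m)$ with the abstract combinatorial witnesses supplied by Lemma~\ref{notgradchar}. After rescaling we may assume $k=1$ there, so for each $i$ we obtain an infinite set $C_i$ and, for $j\in C_i$, a pairwise disjoint family $B_j\subseteq\Pwf(a_j)$ of size $l_i$ whose atoms have submeasure in $(0,1]$ and whose union has submeasure in $(i,i+1]$. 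Passing to an $\Iwf$-positive set and applying Remark~\ref{RemGaps}(2), I may relabel so that the partition $\{a_{i,j,k}\}$ is indexed as in Theorem~\ref{b(unifsubmeas)=aleph1} and $\bigcup B_{i,j,k}=a_{i,j,k}$.

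First I would build the partition $\Awf=\{A_k\}_{k<\omega}$ with $A_k=\bigcup_{i,j}a_{i,j,k}$. For each $k$, let $P_k$ be the product of the collections of subsets of $a_{i,j,k}$ whose submeasure lies in $(k,k+1]$; that this product is non-empty at each coordinate follows from iterating the tallness splitting of Lemma~\ref{tallfrag}(iii) inside $B_{i,j,k}$, exactly as in the ``Claim'' inside the proof of Theorem~\ref{b(meas)=aleph1}. A Cohen real adds an independent element of $P_k$, so by iteration I obtain $\Fwf_k=\{f_{k,\alpha}:\alpha<\omega_1\}\subseteq P_k$ such that, for every finite $F\subseteq\Fwf_k$ and every $i\geq k$, for infinitely many $j$ the selections $\{f_{k,\alpha}(i,j):\alpha\in F\}$ form either a pairwise disjoint family or one whose union is $a_{i,j,k}$. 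Setting $B_\alpha=\bigcup_{k,i,j}f_{k,\alpha}(i,j)$, orthogonality of $\langle\Awf,\{B_\alpha\}_{\alpha<\omega_1}\rangle$ is immediate. To show it is a gap, assume $B$ separates it, fix $\Gamma$ uncountable and $m<\omega$ with $\bar\varphi(B_\alpha\setminus B)\leq m$ for $\alpha\in\Gamma$, choose $k\gg m$ and $i\gg k$ so that $\bar\varphi(A_k\cap B)$ is small compared to $i+1-k$, pick $H\subseteq\Gamma$ of controlled size $n_i(k)$ (bounded uniformly in $j$ by a counting argument as in the proof of Theorem~\ref{b(unifsubmeas)=aleph1}), use independence to force disjointness at some $j$, and argue that $\bigcup_{\alpha\in H}f_{k,\alpha}(i,j)\cap B$ must carry most of the submeasure of $\bigcup_{\alpha\in H}f_{k,\alpha}(i,j)$.

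The main obstacle is the last step. In the uniform case and in the measure case, control of $\varphi_{i,j,k}$ on a union of selected sets translates into a proportional lower bound on the submeasure of $\bigcup_{\alpha\in H}f_{k,\alpha}(i,j)$, and then into a matching lower bound on its intersection with $B$; this is what forces the contradiction. For a general submeasure only subadditivity is available, so a union of $n$ disjoint pieces each of submeasure $\approx k$ may still have submeasure only $\approx k$ rather than $\approx nk$. Hence the counting that drives Theorems~\ref{b(unifsubmeas)=aleph1} and~\ref{b(meas)=aleph1} may collapse entirely. Overcoming this requires an additional combinatorial input extracted from the failure of gradual fragmentation, presumably of Ramsey/dichotomy type: from any fragmented non-gradually-fragmented $\Iwf$, produce a representation $\Iwf=\Iwf\langle a_i,\varphi_i\rangle$ and, at infinitely many $i$, a disjoint family $\Bwf_i\subseteq\Pwf(a_i)$ exhibiting a \emph{proportional} lower bound $\varphi_i(\bigcup\Bwf')\geq c|\Bwf'|$ for all $\Bwf'\subseteq\Bwf_i$.

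To find such an input I would analyze the dichotomy of \cite[Thm.~2.4]{hrusakzap} more finely, trying to refine its ``non-gradual'' side into a two-case subdichotomy: either infinitely many $\varphi_i$ admit a large sub-$\sigma$-algebra on which $\varphi_i$ is equivalent to a measure (reducing to Theorem~\ref{b(meas)=aleph1}), or infinitely many $\varphi_i$ admit a large sub-family on which $\varphi_i$ is essentially uniform (reducing to Theorem~\ref{b(unifsubmeas)=aleph1}). A Hahn--Banach style decomposition of $\varphi_i$ into its maximal measure part and a ``genuinely submeasure'' part, followed by a pigeonhole over $i$, seems the most promising avenue; if successful it would patch the two existing arguments into a uniform proof of the first bullet, and the conjecture would follow. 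The hard part is to guarantee that the extracted sub-family still witnesses the failure of gradual fragmentation at the required quantitative level, and it is exactly at this quantitative matching between the witness of non-gradual fragmentation and the combinatorial structure needed by the independent-family argument that the conjecture seems to resist a direct proof.
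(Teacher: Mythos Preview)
The statement you are trying to prove is labeled a \emph{conjecture} in the paper, and the paper offers no proof of it; it is presented precisely as an open problem in Section~\ref{SecQ}. So there is no ``paper's own proof'' to compare against. You correctly note that the second bullet is Corollary~\ref{b(gradfrag)aboveAddN}, and that the first bullet is the open content.

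Your proposal is not a proof but an honest outline of a research program, and you have put your finger on exactly the obstruction that keeps the conjecture open. The arguments of Theorems~\ref{b(unifsubmeas)=aleph1} and~\ref{b(meas)=aleph1} both rely on a quantitative lower bound for $\varphi_i$ on unions of disjoint pieces: in the measure case this is additivity, and in the uniform case it comes from the function $s_{i,j,k}$ and the inequality $m\cdot s_{i,j,k}(1)\leq s_{i,j,k}(m)$. For a general submeasure no such lower bound is available, so the step where you conclude that $\bigcup_{\alpha\in H}f_{k,\alpha}(i,j)\cap B$ carries ``most'' of the submeasure simply has no justification. Your proposed remedy---a subdichotomy reducing every non-gradually-fragmented $\varphi_i$ to either a measure-like or a uniform-like sub-structure via a Hahn--Banach-type decomposition---is a reasonable heuristic, but you give no argument that such a decomposition exists with the required quantitative coupling to the witnesses from Lemma~\ref{notgradchar}. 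Pathological submeasures (in the spirit of the Maharam problem) suggest this need not be straightforward. Until that step is supplied, the first bullet remains genuinely open, and your write-up should be read as motivation for the conjecture rather than a proof.
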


Even if this is not true, we do believe there is a dichotomy in the sense that for a large class of definable ideals, one of the two alternatives in~\ref{dichconj} holds, and that there is a natural combinatorial characterization saying which way it goes.

\begin{problem}[dichotomy conjecture, general version]
\begin{enumerate}[(a)]
   \item For every fragmented ideal $\Iwf$, either $\bfrak(\Iwf) = \aleph_1$ or $\bfrak(\Iwf) \geq \add(\Nwf)$.
   \item For every $F_\sigma$ ideal $\Iwf$, either $\bfrak(\Iwf) = \aleph_1$ or $\bfrak(\Iwf) \geq \add(\Nwf)$.
   \item For every analytic ideal $\Iwf$, either $\bfrak(\Iwf) = \aleph_1$ or $\bfrak(\Iwf) \geq \add(\Nwf)$.
   \item In (a), (b), or (c), give a combinatorial characterization of the ideals satisfying either case of the dichotomy.
\end{enumerate}
\end{problem}

By Theorem~\ref{add(N)belowsomeb(I)}, for a large class of gradually fragmented ideals $\Iwf$, $\bfrak(\Iwf) > \add(\Nwf)$ is consistent, and we do not know of a $\thzfc$ example of a definable ideal $\Iwf$ such that $\bfrak(\Iwf) = \add(\Nwf)$.

\begin{question}
\begin{enumerate}[(a)]
   \item Is there a (gradually) fragmented ideal $\Iwf$ with $\bfrak(\Iwf) = \add(\Nwf)$? Or is it consistent that $\bfrak(\Iwf) > \add(\Nwf)$ for \emph{all} non-trivial gradually fragmented ideals?
   \item Is there an analytic ideal $\Iwf$ such that $\bfrak(\Iwf) = \add(\Nwf)$?
\end{enumerate}
\end{question}

Our results can be seen as saying that there are two fundamentally distinct linear gaps in many quotients by fragmented ideals: one is of type $(\omega, \bfrak)$ (by Todor\v cevi\'c's results~\cite{todorcevic}) while the other is either of type $(\omega, \omega_1)$ (e.g. in $\Pwf(\omega)/\mathcal{ED}_\textrm{fin}$) or of some type $(\omega, \bfrak(\Iwf))$ with $\add(\Nwf) \leq \bfrak (\Iwf) \leq \bfrak$ (e.g. in $\Pwf (\omega) / \Iwf_P$). A natural question is whether there can even be a third type of linear gap in (some of) these quotients. Notice that while there may be linear $(\omega,\kappa)$-gaps in $\Pwf (\omega) / \mathrm{Fin}$ for several $\kappa$, all of these gaps ``look similar". Namely, by Rothberger's classical result~\cite{rothb}, there is a linear $(\omega,\kappa)$-gap in $\Pwf (\omega) / \mathrm{Fin}$ iff there is a well-ordered unbounded sequence in $(\omega^\omega, \leq^*)$ of length $\kappa$.

\begin{problem}
Characterize those $\kappa$ for which there is a linear $(\omega,\kappa)$-gap in $\Pwf (\omega) / \mathcal{ED}_\textrm{fin}$ (in $\Pwf (\omega) / \Iwf_L$, in $\Pwf (\omega) / \Iwf_P$).
\end{problem}

Choosing countably many of the ideals $\Iwf_n = \Iwf_{c_n} (P_n) = \Iwf \langle a_{n,i}, \varphi_{n,i} \rangle_{i < \omega}$ of the proof of Theorem~\ref{Consdiffb(I)}, letting $h : \omega \times \omega \to \omega$ be a bijection, and defining an ideal $\Iwf$ on $\omega$ by stipulating $x \in \Iwf$ iff there is $k$ such that $\varphi_{n,i} ( h^{-1} [x \cap h[a_{n,i} \times \{n\}] ]) \leq k$ for all $n$ and $i$, one obtains a fragmented ideal such that countable many definable cardinals, namely all the cardinals $\bfrak (\Iwf_n)$, belong to the gap spectrum of $\Iwf$, by Remark~\ref{RemGaps} (2). We do not know whether one can have more definable cardinals in the spectrum.

\begin{question}
Is there a fragmented ideal $\Iwf$ (an $F_\sigma$ ideal) such that, for a family $\bfrak_f$ of consistently (mutually) distinct definable cardinals (uniformly) parametrized by $f \in \omega^\omega$, there is a linear Rothberger gap of type $(\omega, \bfrak_f)$ in $\Pwf (\omega) / \Iwf$?
\end{question}

The definability assumption about the $\bfrak_f$ is arguably a bit vague, but in view of the comment about $\Pwf (\omega) / \mathrm{Fin}$ made above, it is necessary to avoid trivialities.

In the context of Todor\v cevi\'c's results about preservation of gaps by Baire embeddings \cite{todorcevic}, it would be interesting to investigate the existence of embeddings $F:\Pwf(\omega)/\Jwf\to\Pwf(\omega)/\Iwf$ preserving gaps where $\Jwf$ and $\Iwf$ are fragmented ideals (or analytic ideals in general). Note that, in the proof of Corollary \ref{b(LinGr)=aleph1}, we constructed such a continuous embedding $F:\Pwf(\omega)/\mathcal{ED}_\textrm{fin}\to\Pwf(\omega)/\Iwf$ where $\Iwf$ is as in Example \ref{ExpFragId}(4), but we still do not know whether similar embeddings can be constructed in other cases. For example, one may ask whether (some) gradually fragmented ideals can be embedded into fragmented not gradually fragmented ideals like $\mathcal{ED}_\textrm{fin}$ in a gap-preserving way, or whether there can be such embeddings between the distinct ideals of the proof of Theorem~\ref{Consdiffb(I)}.

%
%


\begin{thebibliography}{}
     \bibitem{barju} T. Bartoszy\'nski and H. Judah, \emph{Set Theory. On the Structure of the Real Line,} A. K. Peters, Massachusetts, 1995.
     \bibitem{brendle} J. Brendle, \emph{Larger cardinals in Cicho\'n's diagram,} J. Symb. Logic {56}, no. 3 (1991) 795--810.
     \bibitem{farah1} I. Farah, \emph{Analytic quotients: theory of liftings for quotients over analytic ideals on the integers,} Mem. Amer. Math. Soc. 148, no. 702 (2000) 177 pp.
     \bibitem{farah2} I. Farah, \emph{Analytic Hausdorff gaps,} DIMACS series in Discrete Mathematics and Theoretical Computer Science {58} (2002) 65--72.
     \bibitem{hadamard} J. Hadamard, \emph{Sur les caract\`eres de convergence des s\'eries a termes positifs et sur les fonctions ind\'efiniment croissantes,} Acta Math. {18} (1894) 319--336.
     \bibitem{hausdorff1} F. Hausdorff, \emph{Die Graduierung nach dem Endverlauf,} Abh. K\"onig. S\"achs. Ges. Wiss. Math.-Phys. Kl. {31} (1909) 296--334.
     \bibitem{hausdorff2} F. Hausdorff, \emph{Summen von $\aleph_1$ Mengen,} Fund. Math. {26} (1936) 241--255.
     \bibitem{hrusak} M. Hru\v s\'ak, \emph{Combinatorics of filters and ideals,} Contemp. Math. {533} (2011) 29--69.
     \bibitem{hrusakzap} M. Hru\v s\'ak, D. Rojas-Rebolledo and J. Zapletal, \emph{Cofinalities of Borel ideals,} Math. Logic Quart. {60} No. 1--2 (2014) 31--39.
     \bibitem{jushe} H. Judah and S. Shelah, \emph{The Kunen-Miller chart (Lebesgue measure, the Baire property, Laver reals and preservation theorems for forcing),} J. Symb. Logic {55}, no. 3 (1990) 909--927.
     \bibitem{kamoosuga} S. Kamo and N. Osuga, \emph{Many different covering numbers of Yorioka's ideals,} Arch. Math. Logic {53} (2014) 43--56.
     \bibitem{kankaanpaa} T. Kankaanp\"a\"a, \emph{Remarks on gaps in $\mathrm{Dense} (\Q) / \mathrm{nwd}$,} Math. Logic Quart. {59} (2013) 51--61.
     \bibitem{mazur} K. Mazur, \emph{$F_\sigma$-ideals and $\omega_1\omega_1^*$-gaps in the Boolean algebra $\Pwf(\omega)/\Iwf$,} Fund. Math. {138} (1991) 103--111.
     \bibitem{mejia} D. A. Mej\'ia, \emph{Matrix iterations and Cichon's diagram,} Arch. Math. Logic {52} (2013) 261--278.
     \bibitem{miller} A. Miller, \emph{Some properties of measure and category,} Trans. Amer. Math. Soc. {266} (1981) 93--114.
     \bibitem{rothb} F. Rothberger, \emph{Sur les familles ind\'enombrables de suites de nombres naturels et les probl\`emes concernant la propri\'et\'e C,} Math. Proc. Cambridge Philos. Soc. {37} (1941) 109--126.
     \bibitem{solecki} S. Solecki, \emph{Analytic ideals,} Bull. Symb. Logic {2}, no. 3 (1996) 339--348.
     \bibitem{solecki2} S. Solecki, \emph{Analytic ideals and their applications,} Ann. Pure Appl. Logic {99} (1999) 51--72.
     \bibitem{todorpart} S. Todor\v cevi\'c, \emph{Partition Problems in Topology,} Amer. Math. Soc., Providence, 1989.
     \bibitem{todorcevic} S. Todor\v cevi\'c, \emph{Gaps in analytic quotients,} Fund. Math. {156} (1998) 85--97.
\end{thebibliography}
\end{document}